\documentclass[a4paper,oneside,reqno]{amsart}
\usepackage[top=30mm,bottom=30mm,left=30mm,right=30mm]{geometry}

\usepackage[USenglish]{babel}
\usepackage{csquotes} 
\usepackage{comment}

\usepackage{mathtools,amssymb,amsthm}
\usepackage[arrow,matrix]{xy} 
\usepackage[shortlabels]{enumitem} 

\usepackage{graphicx}
\usepackage{xcolor}

\usepackage[
 bookmarksnumbered=true,
 colorlinks=true,
 allcolors=blue
]{hyperref}

\usepackage[
 backend=biber, 
 style=alphabetic,
 sorting=nyvt
]{biblatex}
\addbibresource{SY-arith-surf-GC.bib}

\theoremstyle{plain}
 \newtheorem{theorem}{Theorem}[section]
 \newtheorem{proposition}[theorem]{Proposition}
 \newtheorem{lemma}[theorem]{Lemma}
 \newtheorem{corollary}[theorem]{Corollary}

 \newtheorem{Itheorem}{Theorem}
 
 \newtheorem{Icorollary}{Corollary}

\theoremstyle{definition}
 \newtheorem{definition}[theorem]{Definition}
 \newtheorem{example}[theorem]{Example}
 \newtheorem{remark}[theorem]{Remark}
 
 \newtheorem{Idefinition}{Definition}
 
  \newtheorem{Inotation}{Notation}

\numberwithin{equation}{section}

\title[\'{E}tale Fund. Groups of Smooth Arith. Surfaces and the Grothendieck Conj.]
      {\'{E}tale Fundamental Groups of Smooth Arithmetic Surfaces and the Grothendieck Conjecture}
\date{Version of \today}

\author[R.~Shimizu]{Ryoji Shimizu}
\address{Institute of Science Tokyo, 2-12-1 Ookayama, Meguro-ku, Tokyo 152-8550, Japan}
\email{shimizu.r.ap@m.titech.ac.jp}

\author[N.~Yamaguchi]{Naganori~Yamaguchi}
\address{Tokyo Denki University, 5 Senju-Asahi-cho, Adachi-ku, Tokyo 120-8551, Japan}
\email{n.yamaguchi@mail.dendai.ac.jp}

\subjclass[2020]{Primary 14H30; Secondary 14F35, 14G25}
\keywords{\'etale fundamental group, anabelian geometry, hyperbolic curves, arithmetic surfaces, rings of $S$-integers}
\thanks{This work was supported by JSPS KAKENHI Grant Numbers 25KJ0125 and 23KJ0881.}

\DeclareMathOperator{\Hom}{Hom}
\DeclareMathOperator{\Isom}{Isom}
\DeclareMathOperator{\Aut}{Aut}
\DeclareMathOperator{\Out}{Out}
\DeclareMathOperator{\Inn}{Inn}

\DeclareMathOperator{\Spec}{Spec}
\DeclareMathOperator{\Gal}{Gal}

\begin{document}

\begin{abstract}
    We study the structure of the \'etale fundamental groups of smooth curves over certain arithmetic schemes,
    and investigate the relative version of Grothendieck's anabelian conjecture in this setting.
    Consequently, every hyperbolic curve over the ring of \texorpdfstring{$S$}{S}-integers of a number field in which a rational prime is inverted is anabelian, i.e., its schematic structure is completely determined by its \'etale fundamental group.
    Moreover, we obtain a partial result toward the semi-absolute version of Grothendieck's anabelian conjecture in this context.
\end{abstract}

\maketitle
\tableofcontents

\section*{Introduction}\label{intro}
In Grothendieck's letter \cite{MR1483108} to G.~Faltings, the following conjecture, called the \textit{Grothendieck conjecture}, was proposed:

\medskip

\begin{quote}\textit{The geometry of ``anabelian'' schemes over fields that are finitely generated over $\mathbb{Q}$ is completely determined by their \'etale fundamental groups.}
\end{quote}

\medskip

Although A.~Grothendieck did not specify precisely which schemes should be called ``anabelian'', he conjectured that hyperbolic curves are anabelian.
The Grothendieck conjecture for hyperbolic curves was proved in the genus zero case by H.~Nakamura \cite[Theorem~6.1]{MR1072981}, in the affine case by A.~Tamagawa \cite[Theorem~6.3]{MR1478817}, and, in a general form stronger than originally conjectured, by S.~Mochizuki \cite[Theorem~16.5]{MR1720187}, who proved that:
\medskip
\begin{quote}\textit{For any rational prime $p$ and hyperbolic curves $X_{1}$, $X_{2}$ over a generalized sub-$p$-adic field $k$ (see~Definition~\ref{def:gen-sub-p-adic}), the natural map
    \begin{equation*}
        \Isom_{k}(X_{1},X_{2})
        \longrightarrow
        \Isom^{\Out}_{G_{k}}\bigl(\pi^{\mathrm{\acute{e}t}}_{1}(X_{1})^{(p)},\,\pi^{\mathrm{\acute{e}t}}_{1}(X_{2})^{(p)}\bigr)
    \end{equation*}
    is bijective.
    Here $\pi^{\mathrm{\acute{e}t}}_{1}(X_{i})^{(p)}$ denotes the maximal geometrically pro-$p$ quotient of the \'etale fundamental group $\pi^{\mathrm{\acute{e}t}}_{1}(X_{i})$ of $X_{i}$ (see~Notation~\ref{notation:etale}), $G_{k}$ denotes the absolute Galois group of $k$, and $\Isom_{\ast}^{\Out}(\ast,\ast)$ denotes the set of all outer isomorphisms (see Notation~\ref{nor:Isomset_prof}).}
\end{quote}
\medskip
Subsequent to S.~Mochizuki's proof of the Grothendieck conjecture for hyperbolic curves, Y.~Hoshi proved that hyperbolic polycurves of dimension $\leq 4$ are anabelian (see~\cite[Theorem~B]{MR3288808}).
Moreover, A.~Schmidt and J.~Stix \cite{MR3549624} proved that strongly hyperbolic Artin neighborhoods are anabelian and, using this, further showed that any smooth, geometrically connected variety over a field finitely generated over $\mathbb{Q}$ admits a basis of Zariski neighborhoods consisting of anabelian varieties (see~\cite[Corollary~1.7]{MR3549624}), as predicted by A.~Grothendieck in his letter \cite{MR1483108} to G.~Faltings.
Y.~Hoshi also proved a more general statement by a different method (see~\cite[Corollary 3.4]{MR4194185}).

In the present paper, we prove the Grothendieck conjecture for hyperbolic curves over certain arithmetic schemes.
Our first main theorem is as follows.

\begin{Itheorem}[Theorem~\ref{thm:relative_anabelian}]\label{Ithm:relative_anabelian}
    Let $\mathcal{S}$ be a scheme that satisfies the following condition:
    \begin{enumerate}[(A)]
        \item\label{Ithm:relative_anabelian_condition0}
              $\mathcal{S}$ is a connected, normal, locally Noetherian scheme whose function field is a generalized sub-$p$-adic field for some rational prime $p$ that is invertible on $\mathcal{S}$.
    \end{enumerate}
    Then, for any hyperbolic curves $\mathcal{X}_{1}$, $\mathcal{X}_{2}$ over $\mathcal{S}$, the natural map
    \begin{equation*}
        \Isom_{\mathcal{S}}(\mathcal{X}_1,\mathcal{X}_2)
        \longrightarrow
        \Isom^{\Out}_{\pi^{\mathrm{\acute{e}t}}_{1}(\mathcal{S})}\bigl(\pi_{1}^{\mathrm{\acute{e}t}}(\mathcal{X}_1)^{(p)},\pi_{1}^{\mathrm{\acute{e}t}}(\mathcal{X}_2)^{(p)}\bigr)
    \end{equation*}
    is bijective.
\end{Itheorem}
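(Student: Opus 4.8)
The plan is to reduce the statement to S.~Mochizuki's theorem over the generalized sub-$p$-adic field $K$ which is the function field of $\mathcal{S}$, exploiting the normality of $\mathcal{S}$ to pass between $\mathcal{S}$ and its generic point $\eta = \Spec K$ on both the geometric and the group-theoretic side. First I would set up the relative formalism: writing $\Delta_{i}$ for the \'etale fundamental group of a geometric generic fibre of the hyperbolic curve $\mathcal{X}_{i}\to\mathcal{S}$ (which is simultaneously a geometric fibre of $\mathcal{X}_{i,K}:=\mathcal{X}_{i}\times_{\mathcal{S}}\Spec K\to\Spec K$), one has compatible homotopy exact sequences $1\to\Delta_{i}\to\pi_{1}^{\mathrm{\acute{e}t}}(\mathcal{X}_{i})\to\pi_{1}^{\mathrm{\acute{e}t}}(\mathcal{S})\to 1$ and $1\to\Delta_{i}\to\pi_{1}^{\mathrm{\acute{e}t}}(\mathcal{X}_{i,K})\to G_{K}\to 1$ with the \emph{same} $\Delta_{i}$; since $p$ is invertible on $\mathcal{S}$, the geometrically pro-$p$ quotients $\Pi_{i}:=\pi_{1}^{\mathrm{\acute{e}t}}(\mathcal{X}_{i})^{(p)}$ and $\Pi_{i}^{K}:=\pi_{1}^{\mathrm{\acute{e}t}}(\mathcal{X}_{i,K})^{(p)}$ are formed from the same full pro-$p$ geometric group $\Delta_{i}^{(p)}$ over all of $\mathcal{S}$. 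As $\mathcal{S}$ is normal and connected, hence integral, the map $G_{K}\to\pi_{1}^{\mathrm{\acute{e}t}}(\mathcal{S})$ is surjective, and a short diagram chase then identifies the natural map $\Pi_{i}^{K}\to\Pi_{i}\times_{\pi_{1}^{\mathrm{\acute{e}t}}(\mathcal{S})}G_{K}$ with an isomorphism carrying the geometric part $\Delta_{i}^{(p)}$ to the geometric part. Restriction to $\eta$ therefore yields a commutative square
\[
\begin{array}{ccc}
\Isom_{\mathcal{S}}(\mathcal{X}_{1},\mathcal{X}_{2}) & \longrightarrow & \Isom^{\Out}_{\pi_{1}^{\mathrm{\acute{e}t}}(\mathcal{S})}(\Pi_{1},\Pi_{2}) \\
\downarrow & & \downarrow \\
\Isom_{K}(\mathcal{X}_{1,K},\mathcal{X}_{2,K}) & \longrightarrow & \Isom^{\Out}_{G_{K}}(\Pi_{1}^{K},\Pi_{2}^{K})
\end{array}
\]
whose bottom arrow is bijective by Mochizuki's theorem applied over $K$, and whose right-hand vertical arrow $\rho$ (concretely, $\psi\mapsto\psi\times_{\pi_{1}^{\mathrm{\acute{e}t}}(\mathcal{S})}\mathrm{id}_{G_{K}}$ under the above identification) is well defined and injective, because a conjugation by the geometric part of $\Pi_{2}^{K}$ descends to a conjugation by the geometric part of $\Pi_{2}$.

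Injectivity of the top arrow is then formal: the left-hand vertical arrow is injective since $\mathcal{X}_{1,K}$ is schematically dense in the reduced scheme $\mathcal{X}_{1}$ and $\mathcal{X}_{2}\to\mathcal{S}$ is separated, so two $\mathcal{S}$-isomorphisms inducing the same $K$-isomorphism coincide; combined with the bijectivity of the bottom arrow this forces the top arrow to be injective.

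For surjectivity, I would start with $\phi\in\Isom^{\Out}_{\pi_{1}^{\mathrm{\acute{e}t}}(\mathcal{S})}(\Pi_{1},\Pi_{2})$, form $\rho(\phi)$, and invoke Mochizuki's theorem over $K$ to obtain a unique $f_{K}\in\Isom_{K}(\mathcal{X}_{1,K},\mathcal{X}_{2,K})$. The crucial point — which I expect to be the main obstacle — is to show that $f_{K}$ spreads out to an $\mathcal{S}$-isomorphism $f\colon\mathcal{X}_{1}\xrightarrow{\ \sim\ }\mathcal{X}_{2}$. Here I would use that, because $\mathcal{X}_{1}$ and $\mathcal{X}_{2}$ are hyperbolic curves of the same type over the \emph{whole} of $\mathcal{S}$, the Isom-scheme $\mathcal{I}:=\Isom_{\mathcal{S}}(\mathcal{X}_{1},\mathcal{X}_{2})$ is finite over $\mathcal{S}$ (equivalently: the relevant moduli stack of hyperbolic curves is a separated Deligne--Mumford stack, hence has finite diagonal). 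The $K$-point $f_{K}$ of $\mathcal{I}_{\eta}$ then has a closure $Z\hookrightarrow\mathcal{I}$ that is integral, finite over $\mathcal{S}$ and birational to $\mathcal{S}$; since a finite birational morphism of integral schemes with normal target is an isomorphism, $Z\to\mathcal{S}$ is an isomorphism, and the inverse section is the desired $f$. Chasing $f$ around the square gives $\rho(a(f))=\rho(\phi)$, whence $a(f)=\phi$ by injectivity of $\rho$, where $a$ denotes the top arrow; this completes the proof.

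The delicate parts are thus (i) the bookkeeping for the relative geometrically pro-$p$ fundamental group and the two homotopy exact sequences, ensuring that the square above genuinely commutes and that $\rho$ is well defined and injective — this is exactly where the precise definition of $\pi_{1}^{\mathrm{\acute{e}t}}(-)^{(p)}$ in the arithmetic-scheme setting and the hypothesis that $p$ be invertible on $\mathcal{S}$ are used — and (ii) the spreading-out of $f_{K}$ over all of $\mathcal{S}$, which is where the hyperbolicity of $\mathcal{X}_{1}$ and $\mathcal{X}_{2}$ over $\mathcal{S}$ (and not merely over $\eta$) genuinely enters, via finiteness of the Isom-scheme together with normality of $\mathcal{S}$.
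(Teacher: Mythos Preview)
Your proposal is correct and follows essentially the same route as the paper: reduce to Mochizuki's theorem over the function field $K$ via the commutative square, using the fibre-product identification $\Pi_{i}^{K}\cong\Pi_{i}\times_{\pi_{1}(\mathcal{S})}G_{K}$ on the group side and the spreading-out of $f_{K}$ through the finite Isom-scheme plus normality on the scheme side. The only organisational differences are that the paper factors the spreading-out step into a separate proposition (their Proposition~1.4, giving bijectivity of the left vertical arrow outright), and explicitly invokes Stix's exactness result \cite[Proposition~2.3]{MR2172341} for the left-exactness of the homotopy sequence over $\mathcal{S}$ that you assert in your ``bookkeeping'' step~(i).
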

The condition \ref{Ithm:relative_anabelian_condition0} is satisfied, for instance, by the spectrum of every generalized sub-$p$-adic field (e.g., every number field), and by the spectrum of any ring of $S$-integers in which a rational prime is inverted (e.g., $\mathbb{Z}[\tfrac{1}{p}]$).
Moreover, if a scheme $\mathcal{S}$ satisfies \ref{Ithm:relative_anabelian_condition0}, then any connected, normal scheme of finite type over $\mathcal{S}$ also satisfies \ref{Ithm:relative_anabelian_condition0} (e.g., $\mathbb{P}^{n}_{\mathcal{S}}$ and $\mathbb{A}^{n}_{\mathcal{S}}$ for $n\geq 1$), see~Example~\ref{ex:conditiona}.
In particular, Theorem~\ref{Ithm:relative_anabelian} gives another class of anabelian schemes:

\begin{Idefinition}\label{def:arithmetic_surface}
    A scheme $X$ over a Dedekind scheme $\mathcal{S}$ is called an \textit{arithmetic surface} if $X$ is regular and the structural morphism $f\colon X\to \mathcal{S}$ is projective, flat, of relative dimension one, and has geometrically connected generic fiber (cf.~\cite[Definition~8.3.14]{MR1917232} for a slightly different definition).
    If, moreover, $f$ is smooth, then $X$ is called a \textit{smooth arithmetic surface}.
\end{Idefinition}

\begin{Icorollary}\label{Icor:arith-surf-GC}
    Let $K$ be a number field and let $S$ be a set of primes of $K$.
    If there exists a rational prime that is invertible in the ring $\mathcal{O}_{K,S}$ of $S$-integers, then every smooth arithmetic surface over $\mathcal{O}_{K,S}$ whose generic fiber has genus at least $2$ is anabelian.
\end{Icorollary}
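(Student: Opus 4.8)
The plan is to recognize a smooth arithmetic surface whose generic fiber has genus $\geq 2$ as a (proper) hyperbolic curve over $\mathcal{S}:=\Spec O_{K,S}$, to verify that $\mathcal{S}$ meets the hypothesis of \cref{thm:relative_anabelian}, and then to bootstrap from the relative Isom-statement to an assertion involving $\pi_1^{\mathrm{\acute{e}t}}$ alone. For the first point: if $f\colon X\to\mathcal{S}$ is a smooth arithmetic surface, then $f$ is smooth, projective and flat of relative dimension one with geometrically connected generic fiber; since, for a smooth proper morphism, both the number of geometrically connected components and the genus of the fibers are locally constant, every fiber over the connected base $\mathcal{S}$ is a smooth proper geometrically connected curve of one and the same genus $g$, and $g\geq 2$ by hypothesis, so $2g-2>0$ on every fiber and $X\to\mathcal{S}$ is a hyperbolic curve over $\mathcal{S}$ of type $(g,0)$. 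The base $\mathcal{S}=\Spec O_{K,S}$ is the spectrum of a Dedekind domain, hence connected, normal and (locally) Noetherian; its function field is the number field $K$, which is a generalized sub-$p$-adic field for \emph{every} rational prime $p$ (a number field embeds into a finite extension of $\mathbb{Q}_p$); and by hypothesis some $p$ is invertible in $O_{K,S}$, i.e.\ on $\mathcal{S}$. Therefore $\mathcal{S}$ satisfies condition~\cref{Ithm:relative_anabelian_condition0}, as also recorded in \cref{ex:conditiona}, and \cref{thm:relative_anabelian} gives, for any two such surfaces $X_1,X_2$ over $\mathcal{S}$, that the natural map $\Isom_{\mathcal{S}}(X_1,X_2)\to\Isom^{\Out}_{\pi_1^{\mathrm{\acute{e}t}}(\mathcal{S})}\bigl(\pi_1^{\mathrm{\acute{e}t}}(X_1)^{(p)},\pi_1^{\mathrm{\acute{e}t}}(X_2)^{(p)}\bigr)$ is bijective. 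In the relative reading of anabelianity (isomorphisms taken over the fixed base) this is already the assertion.

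To reach the fully absolute form — the scheme structure of $X$ being reconstructed from $\pi_1^{\mathrm{\acute{e}t}}(X)$ alone — I would recover the base on both the scheme side and the group side. Scheme-theoretically, properness, flatness and the geometric connectedness of the fibers (with $X$ integral and $\mathcal{S}$ normal) force $f_*\mathcal{O}_X=\mathcal{O}_{\mathcal{S}}$, so $\mathcal{S}=\Spec\Gamma(X,\mathcal{O}_X)$ and $f$ is the canonical morphism; in particular any isomorphism of schemes $X_1\cong X_2$ is automatically an $\mathcal{S}$-isomorphism. Group-theoretically, one must show that the surjection $\pi_1^{\mathrm{\acute{e}t}}(X)\twoheadrightarrow\pi_1^{\mathrm{\acute{e}t}}(\mathcal{S})$, equivalently its kernel $\Delta_X$ (the geometric fundamental group, a profinite surface group of genus $g$), is characterized purely group-theoretically inside $\pi_1^{\mathrm{\acute{e}t}}(X)$, so that an arbitrary isomorphism $\pi_1^{\mathrm{\acute{e}t}}(X_1)\xrightarrow{\sim}\pi_1^{\mathrm{\acute{e}t}}(X_2)$ carries $\Delta_{X_1}$ onto $\Delta_{X_2}$, descends to an isomorphism of $\pi_1^{\mathrm{\acute{e}t}}(\mathcal{S})$, and, being functorial, induces an isomorphism on the geometrically pro-$p$ quotients $\pi_1^{\mathrm{\acute{e}t}}(X_i)^{(p)}$. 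Since $K$ and $S$ are fixed, the resulting automorphism of $\pi_1^{\mathrm{\acute{e}t}}(\mathcal{S})=\pi_1^{\mathrm{\acute{e}t}}(\Spec O_{K,S})$ should be normalized, up to an automorphism of the scheme $\mathcal{S}$, by a rigidity statement in the spirit of Neukirch--Uchida (every outer automorphism being induced by an automorphism of $\mathcal{S}$); after twisting one of the two surfaces by that base automorphism the isomorphism of $(p)$-quotients becomes $\pi_1^{\mathrm{\acute{e}t}}(\mathcal{S})$-compatible, and \cref{thm:relative_anabelian} produces a scheme isomorphism $X_1\cong X_2$. This yields, at the least, the implication ``$\pi_1^{\mathrm{\acute{e}t}}(X_1)\cong\pi_1^{\mathrm{\acute{e}t}}(X_2)$ $\Rightarrow$ $X_1\cong X_2$'', which is what anabelianity asks for in the absolute formulation.

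I expect the main obstacle to be precisely this last ingredient: the group-theoretic characterization of the fiber subgroup $\Delta_X\trianglelefteq\pi_1^{\mathrm{\acute{e}t}}(X)$, i.e.\ disentangling the surface-group part from the arithmetic quotient $\pi_1^{\mathrm{\acute{e}t}}(\mathcal{S})$ — plausibly via cohomological-dimension or weight arguments, the structure of the decomposition and inertia subgroups at the closed points of $\mathcal{S}$, or an appeal to known absolute anabelian reconstruction results for hyperbolic curves — together with the accompanying rigidity of $\pi_1^{\mathrm{\acute{e}t}}(\Spec O_{K,S})$. If no such unconditional characterization is available at this level of generality, the corollary should be read in its semi-absolute form, with the isomorphism of \'etale fundamental groups supplied together with its compatibility with the projection to $\pi_1^{\mathrm{\acute{e}t}}(\mathcal{S})$; in that form it is exactly what \cref{thm:relative_anabelian} delivers, consistent with the abstract's statement that only a partial result toward the semi-absolute version is obtained.
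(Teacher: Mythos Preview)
Your first paragraph is correct and is exactly the paper's argument: the corollary is stated in the Introduction as an immediate consequence of \cref{Ithm:relative_anabelian}, with no separate proof given; ``anabelian'' is meant in the relative sense of that theorem, and your verification that a smooth arithmetic surface with generic fiber of genus $g\ge 2$ is a hyperbolic curve of type $(g,0)$ over $\Spec\mathcal{O}_{K,S}$, and that this base satisfies condition~\cref{Ithm:relative_anabelian_condition0}, is precisely what is needed.

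Your second and third paragraphs attempt something the paper does not claim here. Two remarks on that excursion. First, the assertion that ``any isomorphism of schemes $X_1\cong X_2$ is automatically an $\mathcal{S}$-isomorphism'' is not quite right: from $f_*\mathcal{O}_X=\mathcal{O}_{\mathcal{S}}$ you only get that a scheme isomorphism lies over \emph{some} automorphism of $\mathcal{S}$, not over the identity. Second, as you yourself suspect, both the purely group-theoretic recovery of $\Delta_X$ inside $\pi_1^{\mathrm{\acute{e}t}}(X)$ and a Neukirch--Uchida rigidity for $G_{K,S}$ are not available in this generality; the paper obtains a semi-absolute statement only under additional density hypotheses on $S$ (see \cref{Ithm:htrsjthtr}), and otherwise only recovers the geometric generic fiber (\cref{Ithm:semi-abs-geometric}). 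Your closing fallback to the relative reading is therefore the correct resolution, and you could safely drop the speculative middle portion.
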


The hypothesis ``there exists a rational prime that is invertible on $\mathcal{S}$" in Theorem~\ref{Ithm:relative_anabelian} is necessary.
Indeed, if no rational prime is invertible on $\mathcal{S}$, then we can construct counterexamples to Theorem~\ref{Ithm:relative_anabelian} by using techniques originally given by T.~Saito in \cite{MR1305400}.
We present these counterexamples in Subsection~\ref{subsection1.2}.

\medskip

In Theorem~\ref{Ithm:relative_anabelian}, we assume that every isomorphism is defined over $\mathcal{S}$ or over $\pi_{1}(\mathcal{S})$.
On the other hand, we now consider a \emph{semi-absolute} version in which isomorphisms are not required to be defined over the fixed base (see~Definition~\ref{def:GP}).
The Neukirch--Uchida theorem implies that the semi-absolute and relative versions of the Grothendieck conjecture over number fields are equivalent.
Similarly, from Theorem~\ref{Ithm:relative_anabelian} and the Neukirch--Uchida type result \cite[Theorem~3.4]{MR4626871} for the ring $\mathcal{O}_{K,S}$ of $S$-integers provided $S$ is sufficiently large, we obtain the following result for the semi-absolute version of the Grothendieck conjecture over rings of $S$-integers:

\begin{Itheorem}[Remark~\ref{rem:shimizu} and Theorem~\ref{thm:semi-absolute-main}]\label{Ithm:htrsjthtr}
    Let $i$ range over $\{1,2\}$.
    Let $K_i$ be a number field and let $S_i$ be a set of primes of $K_i$.
    Let $\mathbb{L}(S_{i})$ be the set of all rational primes that are invertible in $\mathcal{O}_{K_{i},S_{i}}$ and let $\mathbb{L}_{i}\subset \mathbb{L}(S_{i})$ be a nonempty subset.
    Let $\mathcal{X}_{i}$ be a hyperbolic curve over $\mathcal{O}_{K_{i},S_{i}}$.
    If the upper Dirichlet densities of $\mathbb{L}(S_1)$ and $\mathbb{L}(S_2)$ are both equal to $1$, then the natural map
    \begin{equation*}
        \Isom\bigl(\mathcal{X}_1/\mathcal{O}_{K_{1},S_1},\,\mathcal{X}_2/\mathcal{O}_{K_2,S_2}\bigr)
        \longrightarrow
        \Isom^{\mathrm{GP},\Out}\Bigl(\pi^{\mathrm{\acute{e}t}}_{1}(\mathcal{X}_1)^{(\mathbb{L}_{1})},\,\pi^{\mathrm{\acute{e}t}}_{1}(\mathcal{X}_2)^{(\mathbb{L}_{2})}\Bigr)
    \end{equation*}
    is bijective, where the right-hand set denotes the set of $\Inn(\Delta_{\mathcal{X}_{2}/\mathcal{O}_{K_2,S_2}}^{\mathbb{L}_{2}})$-orbits of Galois-preserving isomorphisms (see~Definition~\ref{def:GP}).
\end{Itheorem}
At the time of writing, the authors do not know of an affirmative Neukirch--Uchida type result for rings of $S$-integers when the upper Dirichlet densities of both $\mathbb{L}(S_{1})$ and $\mathbb{L}(S_{2})$ are small, e.g., $0$.
Hence we cannot obtain the generalization of Theorem~\ref{Ithm:htrsjthtr} to such a case, at least by the methods of the present paper.
However, we can still reconstruct the geometric generic fibers of hyperbolic curves as follows:

\begin{Itheorem}[Theorem~\ref{thm:semi-abs-geometric}\ref{thm:semi-abs-geometric3-2}]\label{Ithm:semi-abs-geometric}
    We use the same notation as in Theorem~\ref{Ithm:htrsjthtr}.
    Assume that $\#\mathbb{L}_{1}\geq 2$ and that there exists a Galois-preserving isomorphism between $\pi^{\mathrm{\acute{e}t}}_{1}(\mathcal{X}_1)^{(\mathbb{L}_{1})}$ and $\pi^{\mathrm{\acute{e}t}}_{1}(\mathcal{X}_2)^{(\mathbb{L}_{2})}$.
    Then the geometric generic fibers of $\mathcal{X}_{1}$ and $\mathcal{X}_{2}$ are isomorphic as schemes.
\end{Itheorem}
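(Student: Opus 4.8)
The plan is to reduce the statement to the Grothendieck conjecture over finite fields together with a spreading-out argument, the two-prime hypothesis being what grants access to arithmetic information that a single prime would not. First I would separate the given isomorphism into geometric and arithmetic parts. Write $\Pi_i := \pi^{\mathrm{\acute{e}t}}_1(\mathcal{X}_i)^{(\mathbb{L}_i)}$, let $\Delta_i\trianglelefteq\Pi_i$ be the geometric subgroup, so that $\Pi_i/\Delta_i\cong\pi^{\mathrm{\acute{e}t}}_1(\mathcal{S}_i)$ with $\mathcal{S}_i:=\Spec\mathcal{O}_{K_i,S_i}$. By the defining property of a Galois-preserving isomorphism (\cref{def:GP}), the given $\alpha\colon\Pi_1\xrightarrow{\sim}\Pi_2$ carries $\Delta_1$ onto $\Delta_2$ and hence induces compatible isomorphisms $\alpha_\Delta\colon\Delta_1\xrightarrow{\sim}\Delta_2$ and $\bar\alpha\colon\pi^{\mathrm{\acute{e}t}}_1(\mathcal{S}_1)\xrightarrow{\sim}\pi^{\mathrm{\acute{e}t}}_1(\mathcal{S}_2)$ intertwining the outer conjugation actions. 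Since $\Delta_i$ is the pro-$\mathbb{L}_i$ completion of a hyperbolic surface group, the set of rational primes admitting a nontrivial finite quotient of $\Delta_i$ of that order equals $\mathbb{L}_i$, so $\alpha_\Delta$ forces $\mathbb{L}_1=\mathbb{L}_2=:\mathbb{L}$ (and the topological types of $\mathcal{X}_{1,\overline\eta}$, $\mathcal{X}_{2,\overline\eta}$ coincide), with $\#\mathbb{L}\ge 2$. Because $\mathcal{X}_i\to\mathcal{S}_i$ is smooth, $\mathcal{X}_{i,\overline\eta}$ is a hyperbolic curve over $\overline{\mathbb{Q}}$ already defined over $K_i$, namely by the generic fiber $\mathcal{X}_{i,\eta}$, with good reduction outside $S_i$; consequently the outer Galois representation $G_{K_i}\to\Out(\Delta_i)$ factors through $\pi^{\mathrm{\acute{e}t}}_1(\mathcal{S}_i)\xrightarrow{\rho_i}\Out(\Delta_i)$, and $\alpha$ identifies $(\Delta_1,\rho_1)$ with $(\Delta_2,\rho_2)$ via $(\alpha_\Delta,\bar\alpha)$.

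The obstruction one meets at once is that $\bar\alpha$ need not be induced by a scheme isomorphism $\mathcal{S}_1\cong\mathcal{S}_2$: no Neukirch–Uchida statement of the type \cite{MR4626871} is available here, so one cannot place both curves over a common base and quote \cref{thm:relative_anabelian} or \cite{MR1720187}. The hypothesis $\#\mathbb{L}\ge 2$ is what circumvents this. Fix distinct $\ell,\ell'\in\mathbb{L}$. The $\ell$-independence of the characteristic polynomials of Frobenius on $H^1$ of the fibers, together with the Weil bounds, becomes — once both $\rho_i^{(\ell)}$ and $\rho_i^{(\ell')}$ are in hand — a purely group-theoretic condition on conjugacy classes of $\pi^{\mathrm{\acute{e}t}}_1(\mathcal{S}_i)$ that is satisfied by the Frobenius classes at primes of good reduction (and that records their residue characteristics and cardinalities), and this condition is preserved by $\bar\alpha$. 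I would use it to match, through $\bar\alpha$, the Frobenius data at a density-one set of primes $\mathfrak{p}$ of $K_1$ with primes $\mathfrak{q}$ of $K_2$ of the same residue characteristic and cardinality, compatibly with the actions on $\Delta_1\cong\Delta_2$; the Grothendieck conjecture for hyperbolic curves over finite fields (\cite{MR1478817}, combined with the specialization isomorphism for the geometrically pro-$\mathbb{L}$ fundamental group, valid because every prime of $\mathbb{L}$ is coprime to these residue characteristics) then yields $\mathcal{X}_{1,\mathfrak{p}}\cong\mathcal{X}_{2,\mathfrak{q}}$ over a common finite field.

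Finally I would globalize. Each family $\mathcal{X}_i\to\mathcal{S}_i$ induces a morphism from $\mathcal{S}_i$ to the coarse moduli space $M$ of hyperbolic curves of the common topological type; let $C_i\subseteq M$ be the closure of its image, an integral scheme whose generic point records $\mathcal{X}_{i,\overline\eta}$ and which contains, densely, the moduli points of the reductions $\mathcal{X}_{i,\mathfrak{p}}$. As the matched reductions are isomorphic, $C_1$ and $C_2$ share a dense set of closed points of $M$; being reduced, irreducible and closed in $M$, they coincide, hence so do their generic points — which says precisely that $\mathcal{X}_{1,\overline\eta}$ and $\mathcal{X}_{2,\overline\eta}$ define the same point of $M$, i.e.\ are isomorphic as schemes. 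The principal difficulty is the middle step: extracting the Frobenius classes from the two-prime package in a way that is simultaneously well-defined, compatible with $\bar\alpha$, and free of spurious classes, so that the induced identification of reductions — which carries none of the arithmetic of $K_1$ or $K_2$ — is genuinely strong enough to force $C_1=C_2$ rather than only matching topological types or isogeny classes of Jacobians.
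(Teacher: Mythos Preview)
Your overall architecture is quite different from the paper's, and the crucial middle step does not close. The difficulty is exactly the one you flag at the end: you have no way to certify that $\bar\alpha(\mathrm{Frob}_{\mathfrak p})$ is (conjugate to) a Frobenius element in $G_{K_2,S_2}$. The Weil/$\ell$-independence condition you describe is a condition on the image of an element in $\mathrm{GL}(\Delta^{\mathrm{ab},\ell})\times \mathrm{GL}(\Delta^{\mathrm{ab},\ell'})$, and $\bar\alpha$ does transport it; but satisfying this condition in $G_{K_2,S_2}$ does not force an element to be a Frobenius, nor even to have the same outer action on $\Delta_2$ as some Frobenius. Without that, you cannot feed the pair $(\Delta_2,\bar\alpha(\mathrm{Frob}_{\mathfrak p}))$ into Tamagawa's theorem over finite fields, because Tamagawa requires as input the actual arithmetic fundamental group of a curve over a finite field, not an abstract extension of $\widehat{\mathbb Z}$ by a surface group that merely ``looks Weil''. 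Chebotarev gives density of Frobenii, which lets you approximate $\bar\alpha(\mathrm{Frob}_{\mathfrak p})$ by Frobenii in every finite quotient, but there is no single $\mathfrak q$ that works at all levels. So the sentence ``match, through $\bar\alpha$, the Frobenius data at a density-one set of primes'' is where the argument breaks; and without those matched reductions, the (otherwise sound) moduli-closure argument has nothing to feed on.

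The paper proceeds along an entirely different axis, working with the \emph{ramified} primes rather than the unramified ones. First, weight theory (on $\Delta^{\mathrm{ab},p}$ and its coverings) is used to reconstruct the $p$-adic cyclotomic character of $G_{K_i,S_i}$ group-theoretically, so that $\chi_2^{p\text{-}\mathrm{cycl}}\circ\Phi_G=\chi_1^{p\text{-}\mathrm{cycl}}$. With the cyclotomic character in hand and $\#\mathbb L\ge 2$, the result of Chenevier--Clozel \cite[Theorem~1.1]{MR3227527} produces a genuine local correspondence: $\Phi_G$ carries each decomposition subgroup $D_{\overline{\mathfrak p_1}}\subset G_{K_1,S_1}$ at a prime above $\mathbb L$ onto a decomposition subgroup $D_{\overline{\mathfrak p_2}}\subset G_{K_2,S_2}$ at a prime of the same residue characteristic. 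A theorem of Chenevier \cite[Th\'eor\`eme 5.1]{MR2476781} then says these decomposition subgroups are \emph{full}, i.e.\ $G_{K_{i,\mathfrak p_i}}\xrightarrow{\sim}D_{\overline{\mathfrak p_i}}$. Pulling $\Phi$ back over $D_{\overline{\mathfrak p_1}}$ therefore yields a Galois-preserving isomorphism $\Pi_{X_{1,K_{1,\mathfrak p_1}}}^{(\mathbb L)}\cong \Pi_{X_{2,K_{2,\mathfrak p_2}}}^{(\mathbb L)}$ between hyperbolic curves over $p$-adic local fields, to which the semi-absolute $p$-adic Grothendieck conjecture of Mochizuki--Tsujimura \cite[Theorem~3.12]{Mochizuki-Tsujimura:RIMS1974} applies directly; an \'etale descent to the henselizations then gives isomorphic geometric generic fibers. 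The two-prime hypothesis is thus used not to detect Frobenii but to unlock Chenevier--Clozel's characterization of decomposition groups at $S$.
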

The proof of Theorem~\ref{Ithm:semi-abs-geometric} is divided into three parts: (Step~1) Reconstruct the cyclotomic character from the geometrically pro-$\mathbb{L}$ fundamental group by applying Mochizuki's results; (Step~2) Reconstruct the decomposition subgroups of $G_{K,S}$ at $S$ using the result \cite[Theorem~1.1]{MR3227527}; (Step~3) Reduce to the (semi-)absolute version of the Grothendieck conjecture for hyperbolic curves over $p$-adic fields, which was proved in \cite[Theorem~3.12]{Mochizuki-Tsujimura:RIMS1974}, by a fullness result for the structure of the decomposition subgroups of $G_{K,S}$ at $S$ via \cite[Th\'eor\`eme 5.1]{MR2476781}.

\medskip

We outline the contents of each section as follows:
In Subsection~\ref{subsection1.1}, we prove Theorem~\ref{Ithm:relative_anabelian}.
In Subsection~\ref{subsection1.2}, we introduce some counterexamples to the Grothendieck conjecture based on Abhyankar's lemma.
In Subsection~\ref{subsection:Noncommutative_weight_sep_inertia}, we record the group-theoretic reconstruction of the cyclotomic character by applying Mochizuki's results.
In Subsection~\ref{subsection:proof_of_shimizu_type_result}, we prove Theorem~\ref{Ithm:htrsjthtr}.
In Subsection~\ref{subsection:proof_of_semi-abs-geometric}, we prove Theorem~\ref{Ithm:semi-abs-geometric}.

\section*{Notation and conventions}

\begin{Inotation}[The \'etale fundamental groups defined as in {\cite{MR0354651}}]\label{notation:etale}
    For every connected, locally Noetherian scheme $\mathcal{X}$, we define
    \begin{equation*}
        \Pi_{\mathcal{X}}\coloneqq \pi_1^{\mathrm{\acute{e}t}}(\mathcal{X},\ast)
    \end{equation*}
    to be the \textit{\'etale fundamental group} of $\mathcal{X}$, where $\ast:\Spec(\Omega)\rightarrow \mathcal{X}$ denotes a geometric point of $\mathcal{X}$ and $\Omega$ denotes an algebraically closed field.
    Since $\pi_1^{\mathrm{\acute{e}t}}(\mathcal{X},\ast)$ depends on the choice of base points only up to inner automorphisms, we omit the choice of base points below.

    If, moreover,  $\mathcal{X}$ is over a connected, locally Noetherian scheme $\mathcal{S}$, then we set
    \begin{equation*}
        \Delta_{\mathcal{X}/\mathcal{S}}\coloneqq \ker(\Pi_{\mathcal{X}}\to \Pi_{\mathcal{S}}),
        \ \ \ \text{and}\ \ \
        \Pi_{\mathcal{X}}^{(\Sigma)} \coloneqq \Pi_{\mathcal{X}}/\ker (\Delta_{\mathcal{X}/\mathcal{S}}\to \Delta_{\mathcal{X}/\mathcal{S}}^{\Sigma}).
    \end{equation*}
    Here, $\Sigma$ denotes a set of rational primes, and for any profinite group $G$, we denote by $G^{\Sigma}$ the maximal pro-$\Sigma$ quotient of $G$.
    For any prime $p$, we simply write $G^{p}\coloneqq G^{\{p\}}$ and $\Pi_{\mathcal{X}}^{(p)}\coloneqq\Pi_{\mathcal{X}}^{(\{p\})}$.
    We write $\widetilde{\mathcal{X}}\to\mathcal{X}$ for a connected universal Galois covering.
    We write $\tilde{\mathcal{X}}^{\Sigma}\to\mathcal{X}$ for the connected Galois covering corresponding to the closed normal subgroup $\ker(\Pi_{\mathcal{X}}\to\Pi_{\mathcal{X}}^{(\Sigma)})$; its Galois group is naturally identified, up to inner automorphism, with $\Pi_{\mathcal{X}}^{(\Sigma)}$. Note that $\tilde{\mathcal{X}}^{\Sigma}$ is a scheme by \cite[Tag 01YX]{stacks-project}.
\end{Inotation}

\begin{Inotation}[The rings of $S$-integers]\label{notation:OKS}
    For every number field $K$ (i.e., a finite extension of $\mathbb{Q}$), we denote by $\mathcal{O}_{K}$ the \textit{ring of integers} of $K$, i.e., the integral closure of the ring $\mathbb{Z}$ of rational integers in $K$.
    Let $\mathfrak{Primes}_K$ denote the set of all primes of $K$, where ``primes'' refers only to the finite (i.e., non-archimedean) primes, excluding the infinite (i.e., archimedean) ones.
    For any subset $S$ of $\mathfrak{Primes}_{K}$, we set
    \begin{equation*}
        \mathcal{O}_{K,S} \coloneqq \bigcap_{\mathfrak{p}\in\mathfrak{Primes}_K \setminus S} (\mathcal{O}_{K})_\mathfrak{p}
    \end{equation*}
    and refer to it as the \textit{ring of $S$-integers} in $K$.
    If $S=\emptyset$ (resp. $S=\mathfrak{Primes}_K$), then $\mathcal{O}_{K,S}$ coincides with $\mathcal{O}_K$ (resp. $K$).
    For any rational prime $p$, we denote by $S_p$ the set of all primes of $K$ lying over $p$.
    It is clear that $S_{p}\subset S$ if and only if $p$ is invertible in $\mathcal{O}_{K,S}$.
    We denote by $\mathbb{L}(S)$ the set of all rational primes that are invertible in $\mathcal{O}_{K,S}$.
    We write $K_{S}$ for the maximal Galois extension of $K$ unramified at all primes in $\mathfrak{Primes}_{K}\setminus S$ inside a fixed algebraic closure $\overline{K}$.
    Set
    \begin{equation*}
        G_{K,S}\coloneqq \Gal(K_{S}/K).
    \end{equation*}
    Let $\overline{\eta}$ be a geometric generic point of $\Spec(\mathcal{O}_{K,S})$ induced by the inclusion $\mathcal{O}_{K,S} \subset \overline{K}$. Then, via the natural surjection $G_K \to \pi_1^{\mathrm{\acute{e}t}}(\Spec(\mathcal{O}_{K,S}), \overline{\eta})$ induced by the inclusion $\mathcal{O}_{K,S} \hookrightarrow K$, the Galois group $G_{K,S}$ is isomorphic to $\pi_1^{\mathrm{\acute{e}t}}(\Spec(\mathcal{O}_{K,S}), \overline{\eta})$.
    If there is no risk of confusion, we identify $G_{K,S}$ with $\pi_1^{\mathrm{\acute{e}t}}(\Spec(\mathcal{O}_{K,S}), \overline{\eta})$ via this isomorphism.
\end{Inotation}

\begin{Inotation}
    For any scheme $\mathcal{S}$ and any $\mathcal{S}$-schemes $\mathcal{T}$ and $\mathcal{S}'$, we denote by $\mathcal{T}_{\mathcal{S}'}$ the base change (i.e., the fiber product) of $\mathcal{T}$ via $\mathcal{S}'\to\mathcal{S}$.
\end{Inotation}

\begin{Inotation}\label{nor:Isomset_prof}
    For $i\in \{1,2\}$, let $\Delta_{i}$, $\Pi_{i}$, and $G$ be profinite groups with the following exact sequence:
    \begin{equation*}
        1\to \Delta_{i}\to \Pi_{i}\to G\to 1
    \end{equation*}
    We denote by $\Isom_{G}(\Pi_{1},\Pi_{2})$ the set of all $G$-isomorphisms from $\Pi_{1}$ to $\Pi_{2}$, and set
    \begin{equation*}
        \Isom^{\Out}_{G}(\Pi_{1},\Pi_{2})\coloneqq \Isom_{G}(\Pi_{1},\Pi_{2})/\Inn(\Delta_{2}),
    \end{equation*}
    where $\Inn(\Delta_{2})$ denotes the subgroup of
    $\Isom_{G}(\Pi_{2}, \Pi_{2})$ consisting of inner automorphisms given by conjugation by elements of $\Delta_{2}$.
\end{Inotation}

\begin{Inotation}\label{nor:Isomset_sch}
If $f_i:\mathcal{X}_i\to\mathcal{S}_i$ are morphisms of schemes for $i=1,2$, we write
\[
    \Isom(\mathcal{X}_1/\mathcal{S}_1,\mathcal{X}_2/\mathcal{S}_2)
\]
for the set of pairs $(\phi_{\mathcal X},\phi_{\mathcal S})$ consisting of isomorphisms $\phi_{\mathcal X}:\mathcal{X}_1\xrightarrow{\sim}\mathcal{X}_2$ and $\phi_{\mathcal S}:\mathcal{S}_1\xrightarrow{\sim}\mathcal{S}_2$ such that $f_2\circ\phi_{\mathcal X}=\phi_{\mathcal S}\circ f_1$.
\end{Inotation}

\section{Relative version of the Grothendieck conjecture over certain arithmetic schemes}\label{section1}
In this section, we prove the relative version of the Grothendieck conjecture for hyperbolic curves over certain arithmetic schemes when there exists a rational prime that is invertible on the base scheme in Subsection~\ref{subsection1.1} and give some counterexamples to the Grothendieck conjecture when no prime is invertible on the base scheme in~Subsection~\ref{subsection1.2}.

\subsection{Proof of the relative Grothendieck conjecture when there exists an invertible rational prime}\label{subsection1.1}

\subsubsection{}
We first define a smooth curve over a scheme and establish a basic bijection between the automorphisms of a hyperbolic curve over a connected, normal, locally Noetherian scheme and those of its generic fiber (see~Proposition~\ref{prop:Aut-restriction} below).
Note that a connected, normal, locally Noetherian scheme is integral.
A proposition similar to the following can be found in several places (e.g., in \cite[Theorem~1.1(1)]{MR2172341} and \cite[Proposition~9.3.13, Example~9.3.15]{MR1917232}).

\begin{definition}\label{smcurvedef}
    A \textit{smooth curve} (of type $(g, r)$) over a scheme $\mathcal{S}$ is a pair $(\mathcal{X}^{\mathrm{cpt}}, \mathcal{D})$ consisting of an $\mathcal{S}$-scheme $\mathcal{X}^{\mathrm{cpt}}$ and a (possibly empty) closed subscheme $\mathcal{D} \subset \mathcal{X}^{\mathrm{cpt}}$ with the following properties:
    \begin{enumerate}[(i)]
        \item
              $\mathcal{X}^{\mathrm{cpt}}$ is smooth, proper, and of relative dimension one over $\mathcal{S}$.
        \item
              For any geometric point $\overline{s}$ of $\mathcal{S}$, the geometric fiber $\mathcal{X}^{\mathrm{cpt}}_{\overline{s}}$ is connected and satisfies
              \begin{equation*}
                  \dim H^1(\mathcal{X}^{\mathrm{cpt}}_{\overline{s}}, \mathcal{O}_{\mathcal{X}^{\mathrm{cpt}}_{\overline{s}}}) = g.
              \end{equation*}
        \item
              The composition $\mathcal{D} \hookrightarrow \mathcal{X}^{\mathrm{cpt}} \to \mathcal{S}$ is finite \'etale of degree $r$.
    \end{enumerate}
    A smooth curve of type $(g, r)$ is \textit{hyperbolic} if $2-2g-r<0$, i.e., $(g,r)\notin\{(0,0), (0,1), (0,2), (1,0)\}$.
    Set $\mathcal{X} \coloneqq \mathcal{X}^{\mathrm{cpt}} \setminus \mathcal{D}$ for the complement of $\mathcal{D}$ in $\mathcal{X}^{\mathrm{cpt}}$.
    Note that if $\mathcal{S}$ is normal and locally Noetherian, then the smooth compactification of $\mathcal{X}$ is uniquely determined by $\mathcal{X}$ up to unique isomorphism (see~\cite[p.230]{MR2215441}).
    Therefore, when there is no risk of confusion, we also refer to $\mathcal{X}$ as a smooth curve (of type $(g,r)$) over $\mathcal{S}$.
\end{definition}

\begin{lemma}\label{basicconditioneq}
    Let $F$ be an algebraically closed field, and let $\mathbb{L}$ be a nonempty set of rational primes that are invertible in $F$.
    Let $\overline{X}$ be a smooth curve of type $(g,r)$ over $F$.
    Then the following hold:
    \begin{enumerate}[(1)]
        \item \label{basicconditioneq1}
              $(g,r)\notin\{(0,0),(0,1)\}$ if and only if $\Delta^{\mathrm{ab},\mathbb{L}}_{\overline{X}/F}$ is nontrivial.
        \item\label{basicconditioneq2}
              $\overline{X}$ is hyperbolic if and only if $\Delta^{\mathbb{L}}_{\overline{X}/F}$ is not abelian.
    \end{enumerate}
\end{lemma}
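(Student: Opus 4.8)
The plan is to reduce both equivalences to elementary facts about free groups and orientable surface groups, after first pinning down the isomorphism type of $\Delta^{\mathbb{L}}_{\overline{X}/F}$.

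First I would note that since $F$ is algebraically closed, $\Pi_{\Spec F}$ is trivial, so $\Delta_{\overline{X}/F}=\Pi_{\overline{X}}=\pi_1^{\mathrm{\acute{e}t}}(\overline{X})$ and $\Delta^{\mathbb{L}}_{\overline{X}/F}$ is just its maximal pro-$\mathbb{L}$ quotient. As every $\ell\in\mathbb{L}$ is invertible in $F$, the set $\mathbb{L}$ does not contain the characteristic of $F$; hence, by the Riemann existence theorem when $\mathrm{char}\,F=0$ and by Grothendieck's determination of the prime-to-$p$ fundamental group of a curve when $\mathrm{char}\,F=p>0$ (see \cite{MR0354651}), there is an isomorphism between $\Delta^{\mathbb{L}}_{\overline{X}/F}$ and the maximal pro-$\mathbb{L}$ quotient of the discrete group
\[
  \Pi_{g,r}\coloneqq
  \begin{cases}
    \bigl\langle\, x_1,y_1,\dots,x_g,y_g \;\bigm|\; \prod_{i=1}^{g}[x_i,y_i]\,\bigr\rangle, & r=0,\\
    \text{free group of rank } 2g+r-1, & r\geq 1.
  \end{cases}
\]
In particular, with $\mathbb{Z}_{\mathbb{L}}\coloneqq\prod_{\ell\in\mathbb{L}}\mathbb{Z}_{\ell}$, one gets $\Delta^{\mathrm{ab},\mathbb{L}}_{\overline{X}/F}\cong\mathbb{Z}_{\mathbb{L}}^{\,N}$, where $N=2g$ for $r=0$ and $N=2g+r-1$ for $r\geq 1$.

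For part (1), I would then simply observe that, since $\mathbb{L}\neq\emptyset$, the profinite group $\mathbb{Z}_{\mathbb{L}}^{\,N}$ is nontrivial if and only if $N\geq 1$, and that $N=0$ holds precisely when $(g,r)\in\{(0,0),(0,1)\}$ --- an immediate case check. For part (2), if $\overline{X}$ is not hyperbolic then $(g,r)$ is one of $(0,0),(0,1),(0,2),(1,0)$, for which $\Delta^{\mathbb{L}}_{\overline{X}/F}$ is, respectively, trivial, trivial, $\mathbb{Z}_{\mathbb{L}}$, and $\mathbb{Z}_{\mathbb{L}}^{2}$ by the previous paragraph, hence abelian in every case. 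Conversely, if $\overline{X}$ is hyperbolic, I would produce a surjection $\Pi_{g,r}\twoheadrightarrow F_{2}$ onto the free group of rank $2$: when $r\geq 1$ this is automatic since $\Pi_{g,r}$ is itself free of rank $2g+r-1\geq 2$, and when $r=0$ one has $g\geq 2$ and may kill $y_1,\dots,y_g,x_3,\dots,x_g$. Passing to maximal pro-$\mathbb{L}$ quotients yields a surjection from $\Delta^{\mathbb{L}}_{\overline{X}/F}$ onto the maximal pro-$\mathbb{L}$ quotient of $F_2$; since, for any $\ell\in\mathbb{L}$, the group $F_2$ surjects onto the Heisenberg group of order $\ell^{3}$ over $\mathbb{Z}/\ell\mathbb{Z}$ --- a nonabelian finite $\mathbb{L}$-group --- the maximal pro-$\mathbb{L}$ quotient of $F_2$, and therefore $\Delta^{\mathbb{L}}_{\overline{X}/F}$, is nonabelian.

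The step requiring genuine care is the invocation of the structure theorem in positive characteristic: there $\pi_1^{\mathrm{\acute{e}t}}(\overline{X})$ is typically not topologically finitely generated, so I would be careful to cite only the statement computing its maximal prime-to-$p$ quotient --- which, as $\mathbb{L}$ avoids $p$, already determines the maximal pro-$\mathbb{L}$ quotient --- together with its identification with that of the discrete surface or free group $\Pi_{g,r}$. Everything else is routine.
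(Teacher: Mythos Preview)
Your proof is correct and takes essentially the same approach as the paper: both invoke the structure theorem from \cite[Expos\'e XIII, Corollaire~2.12]{MR0354651} to identify $\Delta^{\mathbb{L}}_{\overline{X}/F}$ as a free pro-$\mathbb{L}$ group of rank $2g+r-1$ (when $r\geq 1$) or a pro-$\mathbb{L}$ surface group of genus $g$ (when $r=0$), then read off the abelianization for part~(1) and argue by cases for part~(2). Where the paper simply writes ``direct calculation'' for part~(2), you spell out the non-hyperbolic cases and, in the hyperbolic case, exhibit an explicit nonabelian finite $\mathbb{L}$-quotient via the Heisenberg group---a clean way to make the paper's omitted step precise.
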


\begin{proof}
    We have that $\Delta^{\mathbb{L}}_{\overline{X}/F}$ is a free pro-$\mathbb{L}$ group (resp. a pro-$\mathbb{L}$ surface group) of rank $2g+r-1$ (resp. $g$) when $r\geq 1$ (resp. $r=0$) (see \cite[Expos\'e XIII, Corollaire 2.12]{MR0354651}), and hence $\Delta^{\mathrm{ab},\mathbb{L}}_{\overline{X}/F}$ is a free pro-$\mathbb{L}$ abelian group of rank $2g+r-\varepsilon$, where $\varepsilon$ denotes $1$ (resp. $0$).
    Thus, the assertion \ref{basicconditioneq1} follows.
    The assertion \ref{basicconditioneq2} follows from the direct calculation.
\end{proof}

\begin{lemma}\label{lem:basic_injectivity_}
    Let $\mathcal{S}$ be an integral scheme with generic point $\eta$.
    Let $\mathcal{X}$ be a separated $\mathcal{S}$-scheme and $\mathcal{Y}$ a flat $\mathcal{S}$-scheme.
    If two $\mathcal{S}$-morphisms $f,g:\mathcal{Y}\to \mathcal{X}$ coincide on the generic fiber $\mathcal{Y}_\eta$, i.e., $f|_{\mathcal{Y}_\eta}=g|_{\mathcal{Y}_\eta}$, then $f=g$.
\end{lemma}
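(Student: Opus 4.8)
The plan is to realize the locus on which $f$ and $g$ agree as a closed subscheme of $\mathcal{Y}$, and then use the flatness of $\mathcal{Y}$ over the integral base $\mathcal{S}$ to show that this closed subscheme is all of $\mathcal{Y}$.

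First I would form the equalizer. Since $\mathcal{X}$ is separated over $\mathcal{S}$, the diagonal morphism $\delta\colon\mathcal{X}\to\mathcal{X}\times_{\mathcal{S}}\mathcal{X}$ is a closed immersion; pulling it back along $(f,g)\colon\mathcal{Y}\to\mathcal{X}\times_{\mathcal{S}}\mathcal{X}$ yields a closed immersion $\iota\colon\mathcal{Z}\hookrightarrow\mathcal{Y}$, where $\mathcal{Z}\coloneqq\mathcal{Y}\times_{\mathcal{X}\times_{\mathcal{S}}\mathcal{X}}\mathcal{X}$. By the universal property of the fiber product, an $\mathcal{S}$-morphism $h\colon T\to\mathcal{Y}$ factors (necessarily uniquely, as $\iota$ is a monomorphism) through $\iota$ if and only if $f\circ h=g\circ h$. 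In particular, $f=g$ if and only if $\iota$ is an isomorphism, i.e.\ if and only if the quasi-coherent ideal sheaf $\mathcal{I}\subseteq\mathcal{O}_{\mathcal{Y}}$ defining $\mathcal{Z}$ vanishes. Moreover, the formation of $\mathcal{Z}$ as an equalizer commutes with base change on $\mathcal{S}$; applying this to $\eta\to\mathcal{S}$ and using the hypothesis $f|_{\mathcal{Y}_\eta}=g|_{\mathcal{Y}_\eta}$ gives $\mathcal{Z}_\eta=\mathcal{Y}_\eta$, so $\mathcal{I}|_{\mathcal{Y}_\eta}=0$.

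It remains to deduce $\mathcal{I}=0$ from $\mathcal{I}|_{\mathcal{Y}_\eta}=0$ together with flatness, and this is a local statement on $\mathcal{S}$ and on $\mathcal{Y}$. So I may assume $\mathcal{S}=\Spec A$ with $A$ an integral domain with fraction field $K$, and $\mathcal{Y}=\Spec B$ with $B$ a flat $A$-algebra; let $I\subseteq B$ denote the ideal corresponding to $\mathcal{I}$. Since $B$ is flat over the domain $A$, it is $A$-torsion-free: for nonzero $a\in A$, multiplication by $a$ on $A$ is injective, hence so is multiplication by $a$ on $B\cong A\otimes_A B$. Consequently the submodule $I\subseteq B$ is $A$-torsion-free as well. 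On the other hand, base change to the generic point identifies $\mathcal{Y}_\eta$ with $\Spec(B\otimes_A K)$, and the condition $\mathcal{I}|_{\mathcal{Y}_\eta}=0$ says that $I\otimes_A K=0$, i.e.\ every element of $I$ is annihilated by some nonzero element of $A$. A module that is simultaneously $A$-torsion-free and killed, element by element, by nonzero elements of $A$ must be zero, so $I=0$. Glueing over an affine cover gives $\mathcal{I}=0$, hence $\mathcal{Z}=\mathcal{Y}$ and therefore $f=g$.

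I do not expect a genuine obstacle here: the argument is a standard application of the separatedness of $\mathcal{X}/\mathcal{S}$ (to get a closed equalizer) and the torsion-freeness of flat modules over a domain (equivalently, the scheme-theoretic density of $\mathcal{Y}_\eta$ in $\mathcal{Y}$). The only points that need a little care are phrasing the locus of agreement via the diagonal so as not to assume $\mathcal{X}$ affine, and checking that the equalizer construction is compatible with the base change $\eta\to\mathcal{S}$.
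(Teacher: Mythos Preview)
Your proof is correct and follows essentially the same approach as the paper: both form the equalizer as a closed subscheme of $\mathcal{Y}$ (using separatedness of $\mathcal{X}/\mathcal{S}$) and then argue that it must be all of $\mathcal{Y}$ because $\mathcal{Y}_\eta$ is schematically dense in $\mathcal{Y}$. The only difference is cosmetic: the paper cites EGA~IV, Th\'eor\`eme~11.10.5 for the schematic density of $\mathcal{Y}_\eta$ in $\mathcal{Y}$ (using that $\eta\to\mathcal{S}$ is quasi-compact and $\mathcal{Y}$ is $\mathcal{S}$-flat), whereas you unpack this fact directly via the local torsion-freeness argument.
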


\begin{proof}
    Let $\mathrm{Eq}(f,g)$ be the equalizer of $f$ and $g$.
    Since $\mathcal{X}/\mathcal{S}$ is separated, $\mathrm{Eq}(f,g)$ is a closed subscheme of $\mathcal{Y}$.
    As $f|_{\mathcal{Y}_\eta}=g|_{\mathcal{Y}_\eta}$, the morphism $\mathcal{Y}_{\eta}\to \mathcal{Y}$ factors through $\mathrm{Eq}(f,g)$.
    Since $\eta\to \mathcal{S}$ is quasi-compact and $\mathcal{Y}$ is flat over $\mathcal{S}$, $\mathcal{Y}_{\eta}$ is schematically dense in $\mathcal{Y}$ (see~\cite[Th\'eor\`eme 11.10.5]{MR0217086}).
    Therefore, $\mathrm{Eq}(f,g)=\mathcal{Y}$ follows.
    This completes the proof.
\end{proof}

\begin{proposition}\label{prop:Aut-restriction}
    Let $\mathcal{S}$ be a connected, normal, locally Noetherian scheme with generic point $\eta$.
    Let $\mathcal{X}$ be a hyperbolic curve over $\mathcal{S}$.
    Then the natural restriction map
    \begin{equation*}
        \Aut_{\mathcal{S}}(\mathcal{X})
        \xrightarrow{\ \sim\ }
        \Aut_{K(\mathcal{S})}(\mathcal{X}_\eta); \qquad f\longmapsto f|_{\mathcal{X}_\eta}
    \end{equation*}
    is bijective.
\end{proposition}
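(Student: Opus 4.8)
The plan is to prove injectivity and surjectivity of the restriction map separately; injectivity is immediate from \cref{lem:basic_injectivity_}, applied with $\mathcal{Y}=\mathcal{X}$: the morphism $\mathcal{X}\to\mathcal{S}$ is flat (being smooth) and separated (being an open subscheme of the proper $\mathcal{S}$-scheme $\mathcal{X}^{\mathrm{cpt}}$), so two $\mathcal{S}$-automorphisms of $\mathcal{X}$ agreeing on $\mathcal{X}_{\eta}$ coincide. All the content is therefore in surjectivity. Fixing $\phi\in\Aut_{K(\mathcal{S})}(\mathcal{X}_{\eta})$, I would first pass to the smooth compactification: over the normal locally Noetherian base $\mathcal{S}$ the smooth compactification of a smooth curve is unique, hence functorial, so $\phi$ extends uniquely to $\phi^{\mathrm{cpt}}_{\eta}\in\Aut_{K(\mathcal{S})}(\mathcal{X}^{\mathrm{cpt}}_{\eta})$ with $\phi^{\mathrm{cpt}}_{\eta}(\mathcal{D}_{\eta})=\mathcal{D}_{\eta}$. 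It then suffices to extend $\phi^{\mathrm{cpt}}_{\eta}$ to an $\mathcal{S}$-automorphism of $\mathcal{X}^{\mathrm{cpt}}$ preserving $\mathcal{D}$, since restricting such an automorphism to $\mathcal{X}=\mathcal{X}^{\mathrm{cpt}}\setminus\mathcal{D}$ yields an $\mathcal{S}$-automorphism of $\mathcal{X}$ that restricts to $\phi$ on the generic fiber.

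To construct this extension I would introduce the functor $\mathcal{A}:=\underline{\Aut}_{\mathcal{S}}(\mathcal{X}^{\mathrm{cpt}},\mathcal{D})$ of automorphisms of the pair. Locally on $\mathcal{S}$ it is representable by a scheme of finite type over $\mathcal{S}$ --- a locally closed subscheme of a Hilbert scheme of $\mathcal{X}^{\mathrm{cpt}}\times_{\mathcal{S}}\mathcal{X}^{\mathrm{cpt}}$, cut out by the conditions of being an isomorphism and of preserving $\mathcal{D}$ --- and $\mathcal{A}\to\mathcal{S}$ is separated, quasi-finite and unramified: over an algebraically closed field a hyperbolic curve has finite automorphism group and no infinitesimal automorphisms, the latter because $\omega_{\mathcal{X}^{\mathrm{cpt}}_{\overline{s}}}^{-1}(-\mathcal{D}_{\overline{s}})$ has negative degree $2-2g-r$ and so has no global sections. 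Then $\phi^{\mathrm{cpt}}_{\eta}$ is a section of $\mathcal{A}$ over the generic point $\eta$ of $\mathcal{S}$, and the task is to extend it to a section over all of $\mathcal{S}$.

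The main obstacle is to prove that $\mathcal{A}\to\mathcal{S}$ is proper, hence, being also quasi-finite and separated, finite. By the valuative criterion this reduces to the assertion that over a discrete valuation ring an isomorphism between the generic fibers of two smooth families of hyperbolic curves extends over the ring. For relative genus $g\geq 1$ I would deduce this from the uniqueness of the relatively minimal regular model over a discrete valuation ring (a smooth proper relative curve is relatively minimal, since its special fiber is a smooth connected curve), noting that the extended automorphism automatically preserves $\mathcal{D}$ because $\mathcal{D}$ and its image are flat over the ring with the same generic fiber and so both equal the scheme-theoretic closure of that fiber; the remaining case $g=0$, i.e.\ $r\geq 3$, can be handled by hand using that $\mathcal{D}$ is finite \'etale over the base and the canonical closed embedding defined by $\omega(\mathcal{D})$. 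This is in the spirit of \cite[Proposition~9.3.13, Example~9.3.15]{MR1917232} and \cite[Theorem~1.1(1)]{MR2172341}. (Alternatively one avoids moduli-theoretic input by using that hyperbolicity makes $\omega_{\mathcal{X}^{\mathrm{cpt}}/\mathcal{S}}(\mathcal{D})$ relatively ample: a sufficiently high tensor power defines a canonical closed $\mathcal{S}$-immersion $\mathcal{X}^{\mathrm{cpt}}\hookrightarrow\mathbb{P}(\mathcal{E})$ with $\mathcal{E}$ a vector bundle on $\mathcal{S}$ whose formation commutes with base change, under which $\mathcal{A}$ acts through the affine $\mathcal{S}$-group scheme $\underline{PGL}(\mathcal{E})$; one then extends $\phi^{\mathrm{cpt}}_{\eta}$ over each codimension-one point of $\mathcal{S}$ as above, glues the results via \cref{lem:basic_injectivity_} to an extension over an open $U\subseteq\mathcal{S}$ whose complement has codimension $\geq 2$ in $\mathcal{S}$, and extends the resulting $\underline{PGL}(\mathcal{E})$-valued section across the remaining locus using algebraic Hartogs, $j_{*}\mathcal{O}_{U}=\mathcal{O}_{\mathcal{S}}$, and affineness of $\underline{PGL}(\mathcal{E})$.)

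Once $\mathcal{A}\to\mathcal{S}$ is known to be finite, the section $\phi^{\mathrm{cpt}}_{\eta}\colon\eta\to\mathcal{A}$ does extend: the scheme-theoretic closure $Z$ of its image in $\mathcal{A}$ is integral, finite over $\mathcal{S}$, and birational to $\mathcal{S}$, so $Z\to\mathcal{S}$ is a finite birational morphism onto the normal integral scheme $\mathcal{S}$, hence an isomorphism; composing with its inverse yields a section $\mathcal{S}\to\mathcal{A}$ restricting to $\phi^{\mathrm{cpt}}_{\eta}$ over $\eta$. This section is the desired $\mathcal{S}$-automorphism of $(\mathcal{X}^{\mathrm{cpt}},\mathcal{D})$, and restricting it to $\mathcal{X}$ produces an $\mathcal{S}$-automorphism of $\mathcal{X}$ mapping to $\phi$. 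This proves surjectivity and hence the proposition.
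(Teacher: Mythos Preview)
Your proof is correct and follows essentially the same architecture as the paper's: reduce to the pair $(\mathcal{X}^{\mathrm{cpt}},\mathcal{D})$, establish injectivity via \cref{lem:basic_injectivity_}, show that the Aut scheme $\underline{\Aut}_{\mathcal{S}}(\mathcal{X}^{\mathrm{cpt}},\mathcal{D})$ is finite over $\mathcal{S}$, and then extend the generic section by taking its scheme-theoretic closure and invoking that a finite birational morphism onto a normal scheme is an isomorphism.

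The one substantive difference lies in how finiteness (in particular properness) of $\underline{\Aut}_{\mathcal{S}}\to\mathcal{S}$ is obtained. The paper appeals to the moduli stack $\mathcal{M}_{g,[r]}$: since it is a separated Deligne--Mumford stack, its diagonal is representable, proper, and unramified, so $\underline{\Aut}_{\mathcal{S}}$ is automatically proper and unramified over $\mathcal{S}$; quasi-finiteness then comes from finiteness of automorphism groups of hyperbolic curves, and finiteness follows. You instead represent $\underline{\Aut}_{\mathcal{S}}$ via Hilbert schemes, read off separatedness, quasi-finiteness, and unramifiedness directly, and then verify properness by the valuative criterion (using minimal models for $g\geq 1$ and an ad hoc argument for $g=0$, $r\geq 3$), or alternatively via the log-canonical polarization and an algebraic Hartogs argument. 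Your route is more self-contained and avoids any mention of stacks, at the cost of some extra case analysis; the paper's route is shorter but imports the separatedness of $\mathcal{M}_{g,[r]}$ as a black box (which, unwound, amounts to essentially the same valuative argument you sketch).
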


\begin{proof}
    Let $(g,r)$ be the type of $\mathcal{X}$.
    By the note in Definition~\ref{smcurvedef}, the smooth compactifications of $\mathcal{X}$ and $\mathcal{X}_\eta$ are uniquely determined up to unique isomorphism.
    Hence it suffices to prove that the restriction map
    \begin{equation*}
        \Aut_{\mathcal{S}}\bigl((\mathcal{X}^{\mathrm{cpt}},\mathcal{D})\bigr)
        \longrightarrow
        \Aut_{K(\mathcal{S})}\bigl((\mathcal{X}_\eta^{\mathrm{cpt}},\mathcal{D}_\eta)\bigr)
    \end{equation*}
    is bijective.
    The injectivity follows from Lemma~\ref{lem:basic_injectivity_}.
    Next, we show surjectivity.
    Let $\mathcal{M}_{g,[r]}$ be the moduli stack of smooth curves of type $(g,r)$.
    It is a separated Deligne--Mumford stack (see~\cite[Theorem~2.7]{MR0702953}). (Note that the term ``algebraic stack'' in \cite{MR0702953} is used in the sense of \cite{MR0262240}.
    We refer to it as a \textit{Deligne--Mumford stack} here.)
    In particular, the diagonal morphism is representable, unramified, and proper.
    Let
    \begin{equation*}
        \underline{\Aut}_{\mathcal{S}}
        \coloneqq
        \underline{\Isom}_{\mathcal{S}}\bigl((\mathcal{X}^{\mathrm{cpt}},\mathcal{D}),(\mathcal{X}^{\mathrm{cpt}},\mathcal{D})\bigr)
    \end{equation*}
    be the relative automorphism functor of the pointed smooth curves.
    By representability of the diagonal morphism, the stack $\underline{\Aut}_{\mathcal{S}}$ is represented by an $\mathcal{S}$-scheme that is proper and unramified over $\mathcal{S}$.
    Moreover, since each geometric fiber is hyperbolic, its automorphism group is finite.
    Hence $\underline{\Aut}_{\mathcal{S}}\to \mathcal{S}$ is quasi-finite, and therefore finite by \cite[Tag 02OG]{stacks-project}.

    Let $\psi\in \Aut_{K(\mathcal{S})}\bigl((\mathcal{X}_\eta^{\mathrm{cpt}},\mathcal{D}_\eta)\bigr)$.
    Via the preceding identification, $\psi$ determines a $K(\mathcal{S})$-point $\sigma_\eta:\Spec(K(\mathcal{S}))\to \underline{\Aut}_{\mathcal{S}}$ and the composite morphism $\Spec(K(\mathcal{S}))\to \underline{\Aut}_{\mathcal{S}}\to \mathcal{S}$ coincides with $\eta$.
    Let $Z\subset \underline{\Aut}_{\mathcal{S}}$ be the scheme-theoretic closure of the image of $\sigma_\eta$.
    Then $Z\to \mathcal{S}$ is finite and birational.
    Since $\mathcal{S}$ is connected and normal, a finite birational morphism onto $\mathcal{S}$ is an isomorphism (see~\cite[Corollaire 4.4.9]{MR0217085} or \cite[Tag 0AB1]{stacks-project}).
    Thus $Z\xrightarrow{\sim} \mathcal{S}$, and hence $\sigma_\eta$ extends uniquely to an $\mathcal{S}$-section $\mathcal{S}\to \underline{\Aut}_{\mathcal{S}}$, i.e., $\psi$ extends uniquely to an element of $\Aut_{\mathcal{S}}\bigl((\mathcal{X}^{\mathrm{cpt}},\mathcal{D})\bigr)$.
\end{proof}

\subsubsection{}
Next, we recall a basic result for the homotopy exact sequence of \'etale fundamental groups of smooth curves.
The right-exactness of the homotopy exact sequence can be found in \cite[Expos\'e XIII, Lemme 4.2, Exemples 4.4]{MR0354651}.
Moreover, if it admits a section, then it is exact (see~\cite[Expos\'e XIII, Proposition 4.3, Exemples 4.4]{MR0354651}).
J.~Stix \cite{MR2172341} established exactness in the case of hyperbolic curves as follows:

\begin{proposition}[{\cite[Proposition 2.3]{MR2172341}}]\label{lem:stix_exact}
    Let $\mathcal{S}$ be a connected, locally Noetherian scheme, and set $s\in \mathcal{S}$.
    Let $\kappa(s)$ be the residue field of $s$.
    Let $\mathbb{L}$ be a set of rational primes that are invertible on $\mathcal{S}$.
    Let $\mathcal{X}$ be a hyperbolic curve over $\mathcal{S}$ with special fiber $\mathcal{X}_{s}$ at $s$.
    Then the natural morphism $\Delta_{\mathcal{X}_{s}/\kappa(s)}^{\mathbb{L}}\rightarrow \Delta_{\mathcal{X}/\mathcal{S}}^{\mathbb{L}}$ is an isomorphism.
    In other words, the sequence
    \begin{equation*}
        1\to \Delta_{\mathcal{X}_{s}/\kappa(s)}^{\mathbb{L}}
        \to \Pi_{\mathcal{X}}^{(\mathbb{L})}
        \to \Pi_{\mathcal{S}}
        \to 1
    \end{equation*}
    of natural morphisms is exact.
\end{proposition}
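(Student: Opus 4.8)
The plan is to separate a purely group-theoretic reduction from the geometric heart of the statement. First, $\ker\bigl(\Delta_{\mathcal{X}/\mathcal{S}}\to\Delta_{\mathcal{X}/\mathcal{S}}^{\mathbb{L}}\bigr)$ is characteristic in $\Delta_{\mathcal{X}/\mathcal{S}}$, hence normal in $\Pi_{\mathcal{X}}$; together with the surjectivity of $\Pi_{\mathcal{X}}\to\Pi_{\mathcal{S}}$ (the right-exactness of the homotopy sequence recalled above, \cite[Expos\'{e} XIII]{MR0354651}), this already shows that $1\to\Delta_{\mathcal{X}/\mathcal{S}}^{\mathbb{L}}\to\Pi_{\mathcal{X}}^{(\mathbb{L})}\to\Pi_{\mathcal{S}}\to 1$ is exact. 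So the whole assertion reduces to showing that the natural morphism $\Delta_{\mathcal{X}_{s}/\kappa(s)}^{\mathbb{L}}\to\Delta_{\mathcal{X}/\mathcal{S}}^{\mathbb{L}}$ is an isomorphism. Fixing a geometric point $\overline{s}$ over $s$, I would identify $\Delta_{\mathcal{X}_{s}/\kappa(s)}^{\mathbb{L}}$ with the maximal pro-$\mathbb{L}$ quotient of $\pi_{1}^{\mathrm{\acute{e}t}}(\mathcal{X}_{\overline{s}})$ --- which is independent of the choice of $\overline{s}$ because $\mathbb{L}$ is invertible on $\mathcal{S}$ --- using the exactness of the homotopy sequence of the smooth, geometrically connected curve $\mathcal{X}_{s}$ over the field $\kappa(s)$.

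Surjectivity of $\Delta_{\mathcal{X}_{s}/\kappa(s)}^{\mathbb{L}}\to\Delta_{\mathcal{X}/\mathcal{S}}^{\mathbb{L}}$ again follows from the right-exactness of the homotopy sequence, this time at the geometric point $\overline{s}\to\mathcal{S}$: by \cite[Expos\'{e} XIII]{MR0354651} the sequence $\pi_{1}^{\mathrm{\acute{e}t}}(\mathcal{X}_{\overline{s}})\to\Pi_{\mathcal{X}}\to\Pi_{\mathcal{S}}\to 1$ is exact, so the image of $\pi_{1}^{\mathrm{\acute{e}t}}(\mathcal{X}_{\overline{s}})$ in $\Pi_{\mathcal{X}}$ is exactly $\Delta_{\mathcal{X}/\mathcal{S}}$; since $\pi_{1}^{\mathrm{\acute{e}t}}(\mathcal{X}_{\overline{s}})$ maps through $\Delta_{\mathcal{X}_{s}/\kappa(s)}\subset\Pi_{\mathcal{X}_{s}}$, the morphism $\Delta_{\mathcal{X}_{s}/\kappa(s)}\to\Delta_{\mathcal{X}/\mathcal{S}}$ is surjective, and taking maximal pro-$\mathbb{L}$ quotients (which preserves surjections) gives the claim.

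The substance of the proposition is the injectivity of $\Delta_{\mathcal{X}_{s}/\kappa(s)}^{\mathbb{L}}\to\Delta_{\mathcal{X}/\mathcal{S}}^{\mathbb{L}}$: it says that every finite connected \'{e}tale cover of $\mathcal{X}_{\overline{s}}$ of $\mathbb{L}$-power degree is dominated by the pullback to $\mathcal{X}_{\overline{s}}$ of a finite \'{e}tale cover of $\mathcal{X}$ --- precisely the specialization theory of the fundamental group of curves from \cite[Expos\'{e} XIII]{MR0354651}. Because $\mathbb{L}$ consists of primes invertible on $\mathcal{S}$ and $\mathcal{D}=\mathcal{X}^{\mathrm{cpt}}\setminus\mathcal{X}$ is finite \'{e}tale over $\mathcal{S}$, every such cover is tamely ramified along $\mathcal{D}$. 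I would reduce, by a standard d\'{e}vissage (passing to local rings and their completions), to the case where $\mathcal{S}$ is the spectrum of a strictly henselian complete local ring --- a complete discrete valuation ring with separably closed residue field being the decisive case --- so that $\Pi_{\mathcal{S}}=1$ and one must lift a given tame cover of the closed fiber $\mathcal{X}_{s}$ to a tame cover of $\mathcal{X}$. Over such a base one produces the lift by lifting the finite \'{e}tale cover of $\mathcal{X}_{s}$ uniquely over every infinitesimal thickening inside $\mathcal{X}$ (topological invariance of the \'{e}tale site), extending it compatibly to a finite tame cover of the proper model $\mathcal{X}^{\mathrm{cpt}}$ at each level (Abhyankar's lemma, which is where invertibility of $\mathbb{L}$ is essential), algebraizing the resulting formal cover by Grothendieck's formal existence theorem applied to $\mathcal{X}^{\mathrm{cpt}}$, restricting back to $\mathcal{X}$, and descending from the completion by Artin approximation; uniqueness at each step comes from the rigidity of finite \'{e}tale morphisms. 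This lifting/specialization step is the main obstacle, and the hypothesis that $\mathbb{L}$ is invertible on $\mathcal{S}$ is exactly what keeps residue-characteristic ramification from obstructing the comparison --- without it, injectivity genuinely fails. (This is, in essence, the proof of \cite[Proposition~2.3]{MR2172341}.)
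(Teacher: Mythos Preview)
The paper does not give its own proof of this proposition: it is stated with the attribution \cite[Proposition 2.3]{MR2172341} and used as a black box thereafter. There is therefore nothing in the paper to compare your argument against. Your sketch is a reasonable outline of the specialization argument from \cite{MR2172341} (and ultimately \cite[Expos\'{e} XIII]{MR0354651}), and you yourself acknowledge as much in the final parenthetical remark.
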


\subsubsection{}
Next, we prove the following lemma in group theory, which is well-known to experts.
This statement can be found, for instance, in \cite[Section 7]{MR1478817}.
However, we could not find a written proof, and hence we include one here.

\begin{lemma}\label{lem:fiber_product}
    Let $\Delta$, $\Pi$, $G$, $I$, $A$, and $O$ be profinite groups with the commutative diagram of two horizontal exact sequences
    \begin{equation*}
        \vcenter{
            \xymatrix{
                1\ar[r]&\Delta\ar[r]\ar[d]&\Pi\ar[r]\ar[d]&G\ar[r]\ar[d]&1\\
                1\ar[r]&I\ar[r]&A\ar[r]&O\ar[r]&1.
            }
        }
    \end{equation*}
    Assume that the above left-hand vertical morphism $\Delta\to I$ is an isomorphism.
    Then the natural morphism from $\Pi$ to the fiber product $A\times_{O}G$ is an isomorphism.
\end{lemma}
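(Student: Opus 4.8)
The plan is to use the five lemma together with the universal property of the fiber product. First I would invoke the universal property of $A \times_O G$: the two vertical maps $\Pi \to A$ and $\Pi \to G$ together with the commutativity of the right-hand square of the diagram yield a canonical morphism $\varphi \colon \Pi \to A \times_O G$. I will then show $\varphi$ is an isomorphism by exhibiting it as a morphism of short exact sequences and applying the five lemma. Concretely, the fiber product sits in its own exact sequence
\begin{equation*}
    1 \longrightarrow I \longrightarrow A\times_O G \longrightarrow G \longrightarrow 1,
\end{equation*}
where the map $A\times_O G \to G$ is the second projection (surjective because $A \to O$ is, so every $g \in G$, landing in $O$, is hit together with some $a$) and its kernel is $\{(a,1) : a \mapsto 1 \in O\} = I \times \{1\} \cong I$. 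Then $\varphi$ fits into the commutative ladder whose top row is $1 \to \Delta \to \Pi \to G \to 1$ and whose bottom row is the displayed sequence above, with left vertical map the given isomorphism $\Delta \xrightarrow{\sim} I$, middle map $\varphi$, and right vertical map the identity on $G$. The five lemma then forces $\varphi$ to be an isomorphism.

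The one point requiring a little care is the claim that the induced map $\Delta \to I$ appearing in this ladder agrees, under the identifications, with the originally given isomorphism, and that the right-hand square (involving $G$) commutes; both follow immediately from unwinding the definition of $\varphi$ via the universal property, since the first projection $A\times_O G \to A$ composed with $\varphi$ is the given map $\Pi \to A$, whose restriction to $\Delta$ factors through $I$ and equals the given isomorphism. I should also note that the relevant "five lemma" is the purely group-theoretic one, valid for arbitrary groups (hence in particular for profinite groups with continuous morphisms): injectivity of $\varphi$ uses injectivity of $\Delta \to I$ and of $G \to G$ plus surjectivity of $\Delta \to I$; surjectivity of $\varphi$ uses surjectivity of $\Delta \to I$ and of $G \to G$ plus injectivity of $G \to G$. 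Since $\varphi$ is a continuous bijective homomorphism of profinite (hence compact Hausdorff) groups, it is automatically a homeomorphism, so it is an isomorphism of profinite groups.

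I do not anticipate a serious obstacle here; the statement is genuinely a diagram chase. The only mild subtlety is bookkeeping: making sure that the map $A \times_O G \to G$ really is surjective with kernel canonically $I$ (which uses the surjectivity of $A \to O$ from the bottom row of the hypothesized diagram), and that $\varphi$ is compatible with both the left vertical isomorphism and the identity on $G$. Once the ladder of short exact sequences is set up correctly, the five lemma finishes it.
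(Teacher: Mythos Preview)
Your proof is correct and takes essentially the same approach as the paper: both set up a ladder of short exact sequences comparing $\Pi$ with $A\times_O G$ and then invoke the Five Lemma. The only cosmetic difference is that the paper introduces $K\coloneqq\ker(A\times_O G\to G)$ and proves $K\to I$ is bijective via a small diagram chase, whereas you identify the kernel with $I\times\{1\}\cong I$ directly; your route is slightly more streamlined but the argument is the same.
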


\begin{proof}
    Consider the commutative diagram with exact rows
    \begin{equation}\label{fiberproduct_of_groups}
        \vcenter{
        \xymatrix{
        1\ar[r]&\Delta\ar[r]\ar[d]^{\rho_{2}}&\Pi\ar[r]\ar[d]^{\rho_{1}}&G\ar[r]\ar@{=}[d]&1\\
        1\ar[r]&K\ar[r]\ar[d]^{\rho_{3}}&A\times_{O}G\ar[r]\ar[d]&G\ar[d]\ar[r]&1\\
        1\ar[r]&I\ar[r]&A\ar[r]&O\ar[r]&1.
        }
        }
    \end{equation}
    Here, $K$ denotes the kernel of the natural projection $A \times_{O} G \to G$, and the morphisms $\rho_1$, $\rho_2$, and $\rho_3$ denote the natural morphisms.
    The surjectivity of the morphism $A \times_{O} G \to G$ follows from the commutativity of the upper right square in \eqref{fiberproduct_of_groups}.
    The bijectivity of $\rho_3 \circ \rho_2$ implies that $\rho_3$ is surjective.
    Let $\rho_4 \colon \ker(\rho_3) \to A \times_{O} G$ be the morphism induced by the fiber product diagram
    \begin{equation*}
        \vcenter{
        \xymatrix{
        \ker(\rho_{3})\ar@{.>}[rd]_{\rho_{4}}\ar@{^{(}->}[d]\ar[drr]^{\text{zero map}}\\
        K\ar[d]^{\rho_{3}}&A\times_{O}G\ar[r]\ar[d]&G\ar[d]\\
        I\ar[r]&A\ar[r]&O.
        }
        }
    \end{equation*}
    Then, since $\ker(\rho_{3})\to A$ and $\ker(\rho_{3})\to G$ are zero maps, $\rho_{4}$ is also zero.
    This implies that $\rho_{3}$ is bijective, and hence $\rho_{2}$ is also bijective.
    Thus, by the Five Lemma, $\rho_{1}$ is bijective.
    This completes the proof.
\end{proof}

\subsubsection{}
Using the above results and the Grothendieck conjecture for hyperbolic curves over generalized sub-$p$-adic fields \cite[Theorem~4.12]{MR2012215}, we obtain the first main theorem of the present paper:

\begin{definition}[{\cite[Definition 4.11]{MR2012215}}]\label{def:gen-sub-p-adic}
    Let $p$ be a rational prime.
    We say that a field $K$ is \textit{generalized sub-$p$-adic} if $K$ embeds as a subfield of a finitely generated extension of the fraction field of the ring of the Witt vectors of $\overline{\mathbb{F}}_{p}$ (see, e.g., Example~\ref{ex:conditiona}\ref{ex:conditiona1} below).
\end{definition}

\begin{theorem}\label{thm:relative_anabelian}
    Let $\mathcal{X}_{1}$ and $\mathcal{X}_{2}$ be smooth curves over a scheme $\mathcal{S}$.
    Let $\mathbb{L}$ be a set of rational primes that are invertible on $\mathcal{S}$.
    Assume the following conditions:
    \begin{enumerate}[(a)]
        \item\label{thm:relative_anabelian_condition0}
              $\mathcal{S}$ is a connected, normal, locally Noetherian scheme whose function field is a generalized sub-$p$-adic field for some invertible rational prime $p\in \mathbb{L}$ (see, e.g., Example~\ref{ex:conditiona} below).
        \item\label{thm:relative_anabelian_condition1}
              At least one of $\mathcal{X}_{1}$ and $\mathcal{X}_{2}$ is hyperbolic.
    \end{enumerate}
    Then the natural map
    \begin{equation*}
        \Isom_{\mathcal{S}}(\mathcal{X}_1,\,\mathcal{X}_2)
        \longrightarrow
        \Isom_{\Pi_{\mathcal{S}}}^{\Out}\bigl(\Pi_{\mathcal{X}_1}^{(\mathbb{L})},\,\Pi_{\mathcal{X}_2}^{(\mathbb{L})}\bigr)
    \end{equation*}
    is bijective, where the set $\Isom_{\ast}^{\Out}(\ast,\ast)$ is defined as in Notation~\ref{nor:Isomset_prof}.
\end{theorem}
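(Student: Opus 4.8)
The plan is to reduce the statement to S.~Mochizuki's relative Grothendieck conjecture for hyperbolic curves over generalized sub-$p$-adic fields (\cite[Theorem~4.12]{MR2012215}), applied to the generic fibers. Write $\eta$ for the generic point of $\mathcal{S}$ and $K\coloneqq K(\mathcal{S})$; by assumption $K$ is generalized sub-$p$-adic for some $p\in\mathbb{L}$. First I would reduce to the case where $\mathcal{X}_1$ and $\mathcal{X}_2$ are both hyperbolic. Indeed, the type of each $\mathcal{X}_i$ is constant on the connected scheme $\mathcal{S}$; if $\mathcal{X}_i$ is hyperbolic then \cref{lem:stix_exact} identifies $\Delta^{\mathbb{L}}_{\mathcal{X}_i/\mathcal{S}}$ with the geometric fundamental group of a fiber, which is non-abelian by \cref{basicconditioneq}, while if $\mathcal{X}_i$ is not hyperbolic then by right-exactness of the homotopy sequence $\Delta^{\mathbb{L}}_{\mathcal{X}_i/\mathcal{S}}$ is a quotient of an abelian group, hence abelian. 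Thus, if exactly one of $\mathcal{X}_1$, $\mathcal{X}_2$ is hyperbolic, then $\Delta^{\mathbb{L}}_{\mathcal{X}_1/\mathcal{S}}$ and $\Delta^{\mathbb{L}}_{\mathcal{X}_2/\mathcal{S}}$ — the kernels of the maps to $\Pi_{\mathcal{S}}$ — are not isomorphic, so the target of our map is empty; the source is empty too, since an $\mathcal{S}$-isomorphism would identify geometric fibers of different types. As by hypothesis at least one of the curves is hyperbolic, we may therefore assume both are.

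Next I would form the commutative square
\begin{equation*}
    \vcenter{
        \xymatrix{
            \Isom_{\mathcal{S}}(\mathcal{X}_1,\mathcal{X}_2)\ar[r]^{a}\ar[d]_{b}&\Isom^{\Out}_{\Pi_{\mathcal{S}}}\bigl(\Pi_{\mathcal{X}_1}^{(\mathbb{L})},\Pi_{\mathcal{X}_2}^{(\mathbb{L})}\bigr)\ar[d]^{c}\\
            \Isom_{K}(\mathcal{X}_{1,\eta},\mathcal{X}_{2,\eta})\ar[r]^{d}&\Isom^{\Out}_{G_{K}}\bigl(\Pi_{\mathcal{X}_{1,\eta}}^{(\mathbb{L})},\Pi_{\mathcal{X}_{2,\eta}}^{(\mathbb{L})}\bigr)
        }
    }
\end{equation*}
in which $b$ is restriction to the generic fiber, $c$ is pullback along $\eta=\Spec K\to\mathcal{S}$, and $d$ is the natural comparison map for the $K$-curves $\mathcal{X}_{i,\eta}$; commutativity is functoriality of $\pi_1^{\mathrm{\acute{e}t}}$. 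The map $d$ is bijective by \cite[Theorem~4.12]{MR2012215}, since $K$ is generalized sub-$p$-adic with $p\in\mathbb{L}$ and both $\mathcal{X}_{i,\eta}$ are hyperbolic. The map $b$ is bijective by the Isom-analogue of \cref{prop:Aut-restriction}, proved in exactly the same way: if $\mathcal{X}_1$ and $\mathcal{X}_2$ have different types both sides vanish; otherwise the relative Isom-scheme $\underline{\Isom}_{\mathcal{S}}\bigl((\mathcal{X}^{\mathrm{cpt}}_1,\mathcal{D}_1),(\mathcal{X}^{\mathrm{cpt}}_2,\mathcal{D}_2)\bigr)$ is proper and unramified over $\mathcal{S}$ (from the representability, unramifiedness and properness of the diagonal of $\mathcal{M}_{g,[r]}$) and has finite geometric fibers (torsors under the finite automorphism groups of hyperbolic curves), hence is finite over $\mathcal{S}$; injectivity then follows from \cref{lem:basic_injectivity_}, and surjectivity from the fact that a finite birational morphism onto the normal scheme $\mathcal{S}$ — such as the scheme-theoretic closure of a $K$-point of $\underline{\Isom}_{\mathcal{S}}$ — is an isomorphism. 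Finally, $c$ is injective: the morphism $G_K\to\Pi_{\mathcal{S}}$ is surjective since $\mathcal{S}$ is normal, and \cref{lem:stix_exact} with $s=\eta$ identifies $\Delta^{\mathbb{L}}_{\mathcal{X}_{i,\eta}/K}\xrightarrow{\ \sim\ }\Delta^{\mathbb{L}}_{\mathcal{X}_i/\mathcal{S}}$, so \cref{lem:fiber_product} gives $\Pi^{(\mathbb{L})}_{\mathcal{X}_{i,\eta}}\xrightarrow{\ \sim\ }\Pi^{(\mathbb{L})}_{\mathcal{X}_i}\times_{\Pi_{\mathcal{S}}}G_K$; under this identification $c$ carries the class of $\varphi$ to that of $\varphi\times\mathrm{id}_{G_K}$, and projecting a relation $\varphi_1\times\mathrm{id}_{G_K}=\Inn(\delta)\circ(\varphi_2\times\mathrm{id}_{G_K})$ with $\delta\in\Delta^{\mathbb{L}}_{\mathcal{X}_{2,\eta}/K}$ to the factor $\Pi^{(\mathbb{L})}_{\mathcal{X}_2}$, using surjectivity of $G_K\to\Pi_{\mathcal{S}}$, shows both that $c$ is well defined on outer classes and that it is injective.

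With these three facts the square yields the result. For injectivity of $a$: if $a(f)=a(g)$ then $d(b(f))=c(a(f))=c(a(g))=d(b(g))$, whence $b(f)=b(g)$ ($d$ injective) and $f=g$ ($b$ injective). For surjectivity of $a$: given a class $\alpha$ in the upper right, bijectivity of $d$ gives a unique $\psi\in\Isom_K(\mathcal{X}_{1,\eta},\mathcal{X}_{2,\eta})$ with $d(\psi)=c(\alpha)$, and bijectivity of $b$ a unique $f\in\Isom_{\mathcal{S}}(\mathcal{X}_1,\mathcal{X}_2)$ with $b(f)=\psi$; then $c(a(f))=d(b(f))=d(\psi)=c(\alpha)$, so $a(f)=\alpha$ by injectivity of $c$.

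The hard part is the external input \cite[Theorem~4.12]{MR2012215}; everything else is bookkeeping assembled from \cref{prop:Aut-restriction}, \cref{lem:stix_exact}, \cref{lem:fiber_product} and \cref{basicconditioneq}. The one point that needs care is making sure the cited theorem is available for the \emph{geometrically pro-$\mathbb{L}$} quotient (with $p\in\mathbb{L}$), and not merely for the geometrically pro-$p$ quotient; were only the latter form available, one would additionally have to show that the descent map $\Isom^{\Out}_{G_K}\bigl(\Pi^{(\mathbb{L})}_{\mathcal{X}_{1,\eta}},\Pi^{(\mathbb{L})}_{\mathcal{X}_{2,\eta}}\bigr)\to\Isom^{\Out}_{G_K}\bigl(\Pi^{(p)}_{\mathcal{X}_{1,\eta}},\Pi^{(p)}_{\mathcal{X}_{2,\eta}}\bigr)$ is injective — equivalently, that a $G_K$-automorphism of $\Pi^{(\mathbb{L})}_{\mathcal{X}_{1,\eta}}$ that is inner on $\Pi^{(p)}_{\mathcal{X}_{1,\eta}}$ is already inner — which rests on finer properties of the outer Galois action on the geometric fundamental group.
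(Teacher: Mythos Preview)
Your proof is correct and follows essentially the same route as the paper: both reduce to Mochizuki's theorem on the generic fiber via the same commutative square, using \cref{prop:Aut-restriction} (in its Isom form) for the scheme-theoretic side, \cref{lem:stix_exact} and \cref{lem:fiber_product} for the fiber-product identification $\Pi^{(\mathbb{L})}_{\mathcal{X}_{i,\eta}}\cong\Pi^{(\mathbb{L})}_{\mathcal{X}_i}\times_{\Pi_{\mathcal{S}}}G_K$, and \cref{basicconditioneq} to force both curves to be hyperbolic. The only cosmetic differences are that you carry out the hyperbolicity reduction up front and spell out more carefully why the pullback map $c$ is well defined and injective at the \emph{outer} level; the paper simply asserts the injection after observing injectivity on genuine $\Pi_{\mathcal{S}}$-isomorphisms. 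Your closing caveat about the pro-$\mathbb{L}$ versus pro-$p$ formulation of \cite[Theorem~4.12]{MR2012215} is prudent, but the paper invokes that reference for the pro-$\mathbb{L}$ statement directly, so no extra argument is needed here.
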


\begin{proof}
    If $\Isom_{\Pi_{\mathcal{S}}}(\Pi_{\mathcal{X}_{1}}^{(\mathbb{L})}, \Pi_{\mathcal{X}_{2}}^{(\mathbb{L})}) = \emptyset$, then the assertion is clear.
    Hence, we may assume that $\Isom_{\Pi_{\mathcal{S}}}(\Pi_{\mathcal{X}_{1}}^{(\mathbb{L})}, \Pi_{\mathcal{X}_{2}}^{(\mathbb{L})}) \neq \emptyset$.
    Fix a $\Pi_{\mathcal{S}}$-isomorphism $\Phi:\Pi_{\mathcal{X}_1}^{(\mathbb{L})}\xrightarrow{\sim}\Pi_{\mathcal{X}_2}^{(\mathbb{L})}$.
    Let $\eta$ be the generic point of $\mathcal{S}$ and set $X_{i}\coloneqq \mathcal{X}_{i,\eta}$.
    Let $K(\mathcal{S})$ be the function field of $\mathcal{S}$.
    Then we have the following commutative diagram with exact rows:
    \begin{equation}\label{thm:relative_anabelian_eq_22}
        \vcenter{
            \xymatrix{
                1\ar[r] & \Delta_{X_{i}/K(\mathcal{S})}^{\mathbb{L}} \ar[r] \ar[d] & \Pi_{X_{i}}^{(\mathbb{L})} \ar[r] \ar@{->>}[d] & G_{K(\mathcal{S})} \ar[r] \ar@{->>}[d] & 1 \\
                1\ar[r] & \Delta_{\mathcal{X}_{i}/\mathcal{S}}^{\mathbb{L}} \ar[r] & \Pi_{\mathcal{X}_{i}}^{(\mathbb{L})} \ar[r] & \Pi_{\mathcal{S}} \ar[r] & 1 \\
            }
        }
    \end{equation}
    Here, the left-hand vertical arrow is the isomorphism discussed in Proposition~\ref{lem:stix_exact}.
    Hence $\Phi$ induces an isomorphism $\Delta_{X_1/K(\mathcal{S})}^{\mathbb{L}}\xrightarrow{\sim}\Delta_{X_2/K(\mathcal{S})}^{\mathbb{L}}$.
    By definition, $\mathcal{X}_{i}$ is hyperbolic if and only if $X_{i}$ is hyperbolic.
    Hence Lemma~\ref{basicconditioneq}\ref{basicconditioneq2} and the hypothesis \ref{thm:relative_anabelian_condition1} imply that both smooth curves $\mathcal{X}_{1}$ and $\mathcal{X}_{2}$ are hyperbolic.
    Applying Lemma~\ref{lem:fiber_product} to the diagram \eqref{thm:relative_anabelian_eq_22}, the isomorphism $\Phi$ induces a $G_{K(\mathcal{S})}$-isomorphism $\Pi_{X_{1}}^{(\mathbb{L})}\xrightarrow{\sim}\Pi_{X_{2}}^{(\mathbb{L})}$.
    Equivalently, we obtain the following map:
    \begin{equation*}
        \Isom_{\Pi_{\mathcal{S}}}\bigl(\Pi_{\mathcal{X}_{1}}^{(\mathbb{L})},\,\Pi_{\mathcal{X}_{2}}^{(\mathbb{L})}\bigr)
        \rightarrow
        \Isom_{G_{K(\mathcal{S})}}\bigl(\Pi_{X_{1}}^{(\mathbb{L})},\,\Pi_{X_{2}}^{(\mathbb{L})}\bigr)
    \end{equation*}
    Since $\Pi_{X_{i}}^{(\mathbb{L})}\to\Pi_{\mathcal{X}_{i}}^{(\mathbb{L})}$ is surjective, the above map is injective.
    Then we have the following commutative diagram of the natural maps:
    \begin{equation}\label{commutativi_diag_relGC}
        \vcenter{
        \xymatrix{
        \Isom_{K(\mathcal{S})}(X_{1},\,X_{2})\ar[r]^-{\sim}&\Isom^{\Out}_{G_{K(\mathcal{S})}}\bigl(\Pi_{X_{1}}^{(\mathbb{L})},\,\Pi_{X_{2}}^{(\mathbb{L})}\bigr)\\
        \Isom_{\mathcal{S}}\bigl(\mathcal{X}_1,\,\mathcal{X}_2\bigr)\ar[u]^-{\cong}\ar[r]&\Isom^{\Out}_{\Pi_{\mathcal{S}}}\bigl(\Pi_{\mathcal{X}_{1}}^{(\mathbb{L})},\,\Pi_{\mathcal{X}_{2}}^{(\mathbb{L})}\bigr)\ar@{^{(}->}[u]
        }
        }
    \end{equation}
    Here, the bijectivity of the left-hand vertical arrow follows from
    Proposition~\ref{prop:Aut-restriction}, and the bijectivity of the upper horizontal arrow follows from \cite[Theorem~4.12]{MR2012215} and hypotheses \ref{thm:relative_anabelian_condition0} and \ref{thm:relative_anabelian_condition1}.
    Therefore, the bijectivity of the lower horizontal map follows from the commutativity of the diagram \eqref{commutativi_diag_relGC}.
    This completes the proof.
\end{proof}

\begin{example}\label{ex:conditiona}
    The condition \ref{thm:relative_anabelian_condition0} in Theorem~\ref{thm:relative_anabelian} is satisfied in the following cases:
    \begin{enumerate}[(1)]
        \item\label{ex:conditiona1}
              The spectrum of every generalized sub-$p$-adic field satisfies the condition \ref{thm:relative_anabelian_condition0}.
              For instance, for any rational prime $p$, every number field (or more generally, every field finitely generated over $\mathbb{Q}$) is a generalized sub-$p$-adic field.
              Moreover, every $p$-adic local field is also a generalized sub-$p$-adic field (see~\cite[Definition 15.4]{MR1720187}).
        \item
              Recall that a \textit{Dedekind scheme} is an integral, regular, Noetherian scheme of dimension one.
              Then every Dedekind scheme whose function field is a generalized sub-$p$-adic field for some invertible rational prime $p$ satisfies the condition \ref{thm:relative_anabelian_condition0} (e.g., rings of $S$-integers in which $p$ is invertible, see~Notation~\ref{notation:OKS}).
        \item
              If a scheme $\mathcal{S}$ satisfies the condition \ref{thm:relative_anabelian_condition0}, then any connected, normal $\mathcal{S}$-scheme of finite type also satisfies the condition \ref{thm:relative_anabelian_condition0} (e.g., $\mathbb{P}^{n}_{\mathcal{S}}$ and $\mathbb{A}^{n}_{\mathcal{S}}$ for $n\in\mathbb{Z}_{\geq 1}$).
    \end{enumerate}
\end{example}

\begin{corollary}\label{cor:relative_anabelian}
    We keep the notation of Theorem~\ref{thm:relative_anabelian} and use the same hypotheses \ref{thm:relative_anabelian_condition0} and \ref{thm:relative_anabelian_condition1}.
    Then the natural map
    \begin{equation*}
        \Isom_{\widetilde{\mathcal{S}}/\mathcal{S}}\!\bigl(\tilde{\mathcal{X}}_{1}^{\mathbb{L}}/\mathcal{X}_{1}, \tilde{\mathcal{X}}_{2}^{\mathbb{L}}/\mathcal{X}_{2}\bigr)
        \longrightarrow
        \Isom_{\Pi_{\mathcal{S}}}\bigl(\Pi_{\mathcal{X}_{1}}^{(\mathbb{L})}, \Pi_{\mathcal{X}_{2}}^{(\mathbb{L})}\bigr)
    \end{equation*}
    is bijective, where $\tilde{\mathcal{X}}_{i}^{\mathbb{L}}$ and $\widetilde{\mathcal{S}}$ are chosen as in Notation~\ref{notation:etale}.
\end{corollary}

\begin{proof}
    Consider the commutative diagram with exact rows
    \begin{equation*}
        \xymatrix@C=14pt{
        1\ar[r]&
        \Aut_{\mathcal{X}_{2,\tilde{\mathcal{S}}}}\bigl(\tilde{\mathcal{X}}_2^{\mathbb{L}}\bigr)\ar@{->>}[d]^{\theta}\ar[r]&
        \Isom_{\widetilde{\mathcal{S}}/\mathcal{S}}\bigl(\tilde{\mathcal{X}}_1^{\mathbb{L}}/\mathcal{X}_1,\tilde{\mathcal{X}}_2^{\mathbb{L}}/\mathcal{X}_2\bigr)\ar[d]\ar[r]&
        \Isom_{\mathcal{S}}(\mathcal{X}_1,\mathcal{X}_2)\ar[d]^{\Psi}\ar[r]&1\\
        1\ar[r]&
        \Inn(\Delta_{\mathcal{X}_2/\mathcal{S}}^{\mathbb{L}})\ar[r]&
        \Isom_{\Pi_{\mathcal{S}}}\bigl(\Pi_{\mathcal{X}_1}^{(\mathbb{L})},\Pi_{\mathcal{X}_2}^{(\mathbb{L})}\bigr)\ar[r]&
        \Isom_{\Pi_{\mathcal{S}}}^{\Out}\bigl(\Pi_{\mathcal{X}_1}^{(\mathbb{L})},\Pi_{\mathcal{X}_2}^{(\mathbb{L})}\bigr)\ar[r] &1.
        }
    \end{equation*}
    The exactness of the upper sequence follows from Proposition~\ref{prop:Aut-restriction} and \cite[Lemma 6.2]{MR1478817}.
    By Theorem~\ref{thm:relative_anabelian}, the right-hand vertical arrow $\Psi$ is bijective.
    Let $\eta$ be the generic point of $\mathcal{S}$ and set $X_{i}\coloneqq \mathcal{X}_{i,\eta}$.
    Since $\Aut_{\mathcal{X}_{2,\tilde{\mathcal{S}}}}(\tilde{\mathcal{X}}_2^{\mathbb{L}})\cong \Delta_{\mathcal{X}_2/\mathcal{S}}^{\mathbb{L}}\cong \Delta_{X_{2}/K(\mathcal{S})}^{\mathbb{L}}$ by Proposition~\ref{lem:stix_exact} and the fact that $\Delta_{X_{2}/K(\mathcal{S})}^{\mathbb{L}}$ is center-free (see~\cite[Proposition 1.11]{MR1478817}), the left-hand vertical arrow $\theta$ is bijective.
    Therefore the middle vertical arrow is also bijective by the Five Lemma.
    This completes the proof.
\end{proof}

\subsubsection{}
We obtain pro-$\Sigma$ versions of Theorem~\ref{thm:relative_anabelian} and Corollary~\ref{cor:relative_anabelian} for a certain set $\Sigma$ of rational primes.
In particular, the profinite versions of Theorem~\ref{thm:relative_anabelian} and Corollary~\ref{cor:relative_anabelian} also hold, which is the basic setting in anabelian geometry.
Although the following argument of the generalization from the pro-$\mathbb{L}$ version to the pro-$\Sigma$ version is standard in anabelian geometry, we provide the details here for the convenience of the reader.

\begin{corollary}\label{cor:relative_anabelian_conditionsigma}
    Let $\mathcal{X}_{1}$ and $\mathcal{X}_{2}$ be smooth curves over a scheme $\mathcal{S}$.
    Let $\Sigma$ be a set of rational primes.
    Assume the following conditions:
    \begin{enumerate}[(a)]
        \item\label{cor:relative_anabelian_conditionsigma0}
              $\mathcal{S}$ is a connected, normal, locally Noetherian scheme whose function field is a generalized sub-$p$-adic field for some $p\in \Sigma$ that is invertible on $\mathcal{S}$.
        \item\label{cor:relative_anabelian_conditionsigma1}
              At least one of $\mathcal{X}_{1}$ and $\mathcal{X}_{2}$ is hyperbolic.
    \end{enumerate}
    Then the following hold:
    \begin{enumerate}[(1)]
        \item\label{cor:relative_anabelian_conditionsigmalem1}
              The natural map
              \begin{equation*}
                  \Isom_{\widetilde{\mathcal{S}}/\mathcal{S}}(\tilde{\mathcal{X}}^{\Sigma}_{1}/\mathcal{X}_{1},\,\tilde{\mathcal{X}}^{\Sigma}_{2}/\mathcal{X}_{2})
                  \to
                  \Isom_{\Pi_{\mathcal{S}}}(\Pi_{\mathcal{X}_{1}}^{(\Sigma)},\,\Pi_{\mathcal{X}_{2}}^{(\Sigma)})
              \end{equation*}
              is bijective, where $\tilde{\mathcal{X}}_{i}^{\Sigma}$ and $\widetilde{\mathcal{S}}$ are chosen as in Notation~\ref{notation:etale}.
        \item\label{cor:relative_anabelian_conditionsigmalem3}
              The natural map
              \begin{equation*}
                  \Isom_{\mathcal{S}}(\mathcal{X}_1,\,\mathcal{X}_2)
                  \longrightarrow
                  \Isom_{\Pi_{\mathcal{S}}}^{\Out}\bigl(\Pi_{\mathcal{X}_1}^{(\Sigma)},\,\Pi_{\mathcal{X}_2}^{(\Sigma)}\bigr)
              \end{equation*}
              is bijective, where the set $\Isom_{\ast}^{\Out}(\ast,\ast)$ is defined as in Notation~\ref{nor:Isomset_prof}.
    \end{enumerate}
\end{corollary}

\begin{proof}
    Let $i$ range over $\{1,2\}$.
    First, we show \ref{cor:relative_anabelian_conditionsigmalem1}.
    If $\Isom_{\Pi_{\mathcal{S}}}(\Pi_{\mathcal{X}_{1}}^{(\Sigma)}, \Pi_{\mathcal{X}_{2}}^{(\Sigma)}) = \emptyset$, then the assertion is clear.
    Hence, we may assume that $\Isom_{\Pi_{\mathcal{S}}}(\Pi_{\mathcal{X}_{1}}^{(\Sigma)}, \Pi_{\mathcal{X}_{2}}^{(\Sigma)}) \neq \emptyset$.
    Let $\Phi:\Pi_{\mathcal{X}_1}^{(\Sigma)}\xrightarrow{\ \sim\ }\Pi_{\mathcal{X}_2}^{(\Sigma)}$ be a $\Pi_{\mathcal{S}}$-isomorphism.
    Let $\eta$ be the generic point of $\mathcal{S}$ and set $X_{i}\coloneqq \mathcal{X}_{i,\eta}$.
    Let $K(\mathcal{S})$ be the function field of $\mathcal{S}$.
    By Proposition~\ref{lem:stix_exact}, $\Phi$ induces an isomorphism $\Delta_{X_1/K(\mathcal{S})}^{\Sigma}\xrightarrow{\sim}\Delta_{X_2/K(\mathcal{S})}^{\Sigma}$.
    Hence Lemma~\ref{basicconditioneq}\ref{basicconditioneq2} and hypothesis \ref{cor:relative_anabelian_conditionsigma1} imply that both smooth curves $\mathcal{X}_{1}$ and $\mathcal{X}_{2}$ are hyperbolic.
    Let $N_1\subset\Pi_{\mathcal{X}_1}^{(\Sigma)}$ be an open subgroup and set $N_2\coloneqq \Phi(N_1)$.
    Let $\mathcal{Y}_i\to\mathcal{X}_i$ be the connected finite \'etale covering corresponding to $N_i$.
    Let $\mathcal{S}'\to \mathcal{S}$ be the connected finite \'etale covering corresponding to the image of $N_i$ in $\Pi_{\mathcal{S}}$.
    Let $p$ be the rational prime as in hypothesis \ref{cor:relative_anabelian_conditionsigma0}.
    Then we have the commutative diagram with exact rows
    \begin{equation*}
        \xymatrix{
        1\ar[r]&\ar[d]^{\cong}\Delta_{\mathcal{Y}_{i}/\mathcal{S}'}^{p}\ar[r]&\ar[d]\Pi_{\mathcal{Y}_{i}}^{(p)}\ar[r]&\Pi_{\mathcal{S}'}\ar[r]\ar@{=}[d]&1\\
        1\ar[r]&\ker(N_{i}\to \Pi_{\mathcal{S}'})^{p}\ar[r]&N_{i}^{(p)}\ar[r]&\Pi_{\mathcal{S}'}\ar[r]&1,
        }
    \end{equation*}
    where $N_{i}^{(p)}$ denotes the quotient
    \[
        N_i/\ker\Bigl(\ker(N_i\to\Pi_{\mathcal{S}'})\to\ker(N_i\to\Pi_{\mathcal{S}'})^{p}\Bigr),
    \]
    so that it fits into the lower exact sequence.
    Hence $\Phi$ induces a $\Pi_{\mathcal{S}'}$-isomorphism $\Pi_{\mathcal{Y}_{1}}^{(p)}\xrightarrow{\sim}\Pi_{\mathcal{Y}_{2}}^{(p)}$.
    Then Corollary~\ref{cor:relative_anabelian} implies that there exists a unique element
    \begin{equation*}
        (\tilde\phi_{\mathcal{Y}_{1}}^{p},\,\phi_{\mathcal{Y}_{1}})
        \in
        \Isom_{\widetilde{\mathcal{S}}/\mathcal{S}}\bigl(\tilde{\mathcal{Y}}_1^{p}/\mathcal{Y}_1,\,\tilde{\mathcal{Y}}_2^{p}/\mathcal{Y}_2\bigr)
    \end{equation*}
    that induces the $\Pi_{\mathcal{S}'}$-isomorphism $\Pi_{\mathcal{Y}_{1}}^{(p)}\xrightarrow{\sim}\Pi_{\mathcal{Y}_{2}}^{(p)}$.
    By running over all $N_{1}$ as above and using the above uniqueness at each finite level, these data are compatible under pullback.
    Hence, we obtain a unique element
    \begin{equation*}
        (\tilde\phi,\phi_{\mathcal{X}_{1}})
        \in
        \Isom_{\widetilde{\mathcal{S}}/\mathcal{S}}\bigl(\tilde{\mathcal{X}}_1^{\Sigma}/\mathcal{X}_1,\,\tilde{\mathcal{X}}_2^{\Sigma}/\mathcal{X}_2\bigr)
    \end{equation*}
    such that the $\Pi_{\mathcal{S}}$-isomorphism $\Pi_{\mathcal{X}_1}^{(\Sigma)}\xrightarrow{\sim}\Pi_{\mathcal{X}_2}^{(\Sigma)}$ induced by this element coincides with $\Phi$, where $\tilde{\phi}$ is induced by the inverse system of the morphisms $\phi_{\mathcal{Y}_{1}}$.
    This completes the proof of \ref{cor:relative_anabelian_conditionsigmalem1}.

    Next, we show \ref{cor:relative_anabelian_conditionsigmalem3}.
    Consider the commutative diagram with exact rows
    \begin{equation*}
        \xymatrix@C=14pt{
        1\ar[r]&
        \Aut_{\mathcal{X}_{2,\tilde{\mathcal{S}}}}\bigl(\tilde{\mathcal{X}}_2^{\Sigma}\bigr)\ar@{->>}[d]\ar[r]&
        \Isom_{\widetilde{\mathcal{S}}/\mathcal{S}}\bigl(\tilde{\mathcal{X}}_1^{\Sigma}/\mathcal{X}_1,\tilde{\mathcal{X}}_2^{\Sigma}/\mathcal{X}_2\bigr)\ar[d]\ar[r]&
        \Isom_{\mathcal{S}}(\mathcal{X}_1,\mathcal{X}_2)\ar[d]\ar[r]&1\\
        1\ar[r]&
        \Inn(\Delta_{\mathcal{X}_2/\mathcal{S}}^{\Sigma})\ar[r]&
        \Isom_{\Pi_{\mathcal{S}}}\bigl(\Pi_{\mathcal{X}_1}^{(\Sigma)},\Pi_{\mathcal{X}_2}^{(\Sigma)}\bigr)\ar[r]&
        \Isom_{\Pi_{\mathcal{S}}}^{\Out}\bigl(\Pi_{\mathcal{X}_1}^{(\Sigma)},\Pi_{\mathcal{X}_2}^{(\Sigma)}\bigr)\ar[r] &1,
        }
    \end{equation*}
    where the exactness of the upper sequence follows from Proposition~\ref{prop:Aut-restriction} and \cite[Lemma 6.2]{MR1478817}.
    The assertion \ref{cor:relative_anabelian_conditionsigmalem1}, together with the snake lemma, implies that the left-hand and right-hand vertical arrows are bijective.
    Hence \ref{cor:relative_anabelian_conditionsigmalem3} follows.
    This completes the proof.
\end{proof}

Other generalizations of Theorem~\ref{thm:relative_anabelian} and Corollary~\ref{cor:relative_anabelian} can be considered under suitable modifications.
For instance, we can consider the $m$-step solvable generalization, see~\cite{yamaguchi2024refined} and \cite{MR4745885} for classical results on this topic.

\subsection{Counterexamples to the Grothendieck Conjecture when \texorpdfstring{no rational prime is invertible}{no rational prime is invertible}}\label{subsection1.2}

\subsubsection{}
In this subsection, we introduce some counterexamples to the Grothendieck conjecture when no rational prime is invertible on the base scheme.
We first recall a previous result for the \'etale fundamental groups of smooth curves.

\begin{theorem}[{\cite[Theorem C]{MR1305400}}]\label{thm:iharatriv}
    Let $K$ be a number field and let $\mathbb{P}^{1}_{K}$ be the projective $t$-line over $K$. 
    For $a\in K$, we denote by $D_a$ the Zariski closure in $\mathbb{P}^{1}_{\mathcal{O}_{K}}$ of the principal divisor defined by $t-a=0$, endowed with the reduced structure. Moreover, $D_\infty$ denotes the section at infinity, endowed with the reduced structure.
    Then $(\mathbb{P}^1_{\mathcal{O}_{K}},D_{0}\sqcup D_1\sqcup D_\infty)$ is a smooth curve over $\mathcal{O}_{K}$, and the morphism
    \begin{equation*}
        \Pi_{\mathbb{P}^1_{\mathcal{O}_{K}}\setminus D_{0}\sqcup D_1\sqcup D_\infty}
        \rightarrow
        \Pi_{\Spec(\mathcal{O}_{K})}
    \end{equation*}
    induced by the structure morphism is an isomorphism.
\end{theorem}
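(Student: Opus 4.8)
The plan is to reduce the statement to the claim that $U\coloneqq\mathbb P^1_{\mathcal O_K}\setminus(D_0\cup D_1\cup D_\infty)$ has no connected finite \'etale cover which is geometrically connected over $\Spec\mathcal O_K$ other than $U$ itself, and then to exploit the fact that no rational prime is a unit in $\mathcal O_K$. First I would dispatch the elementary claims: $\mathbb P^1_{\mathcal O_K}$ is smooth, proper, of relative dimension one with geometric fibers $\mathbb P^1$ (so $g=0$), and the $D_a$ are the images of the sections $t=a$, pairwise disjoint since $0,1,\infty$ stay distinct in every fiber ($0-1=-1\in\mathcal O_K^\times$); hence $D_0\sqcup D_1\sqcup D_\infty\to\Spec\mathcal O_K$ is finite \'etale of degree $3$ and $(\mathbb P^1_{\mathcal O_K},D_0\sqcup D_1\sqcup D_\infty)$ is a smooth curve of type $(0,3)$, hyperbolic since $2-2g-r=-1<0$. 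Surjectivity of $\Pi_U\to\Pi_{\Spec\mathcal O_K}$ is immediate: for any connected finite \'etale $S'\to\Spec\mathcal O_K$ (automatically integral) the scheme $U\times_{\Spec\mathcal O_K}S'$ is a dense open of the integral scheme $\mathbb P^1_{S'}$, hence connected. For injectivity one must show $\Delta_{U/\Spec\mathcal O_K}=\ker(\Pi_U\to\Pi_{\Spec\mathcal O_K})=1$; by right-exactness of the homotopy sequence for smooth relative curves this group is a quotient of $\pi_1(\mathbb P^1_{\overline K}\setminus\{0,1,\infty\})\cong\widehat{F_2}$, and after replacing $\mathcal O_K$ by its integral closure in the field of constants of a given cover (again the ring of integers of a number field, still with no invertible rational prime) it is enough to treat geometrically connected covers.

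Let $W\to U$ be such a cover, of degree $n$, and let $\overline W\to\mathbb P^1_{\mathcal O_K}$ be the normalization of $\mathbb P^1_{\mathcal O_K}$ in $W$. By Zariski--Nagata purity $\overline W\to\mathbb P^1_{\mathcal O_K}$ is finite and \'etale outside $D_0\cup D_1\cup D_\infty$; in particular it is \'etale at the generic point $\xi_{\mathfrak p}$ of each special fiber $\mathbb P^1_{\kappa(\mathfrak p)}$, which lies in $U$. Since each $D_a\cong\Spec\mathcal O_K$ meets residue fields of every characteristic, Abhyankar's lemma shows that if $\overline W$ is tamely ramified along $D_0\cup D_1\cup D_\infty$ then all ramification indices are $1$, hence $\overline W\to\mathbb P^1_{\mathcal O_K}$ is \'etale, hence pulled back from $\Spec\mathcal O_K$, hence trivial. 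So everything comes down to excluding wild ramification of $\overline W$ along the horizontal divisors. The abelian case is immediate: a nontrivial $\mathbb Z/\ell$-subcover of $W_{\overline\eta}\to U_{\overline\eta}$ is a Kummer cover $z^\ell=t^a(1-t)^b$ with $(a,b)\not\equiv(0,0)\bmod\ell$, and for $\mathfrak p\mid\ell$ the place of this cover over the Gauss point $\xi_{\mathfrak p}$ is ramified (one adjoins an $\ell$th root of a unit whose reduction is not an $\ell$th power in $\kappa(\mathfrak p)(t)$, inside the $\ell$-adic discrete valuation ring $\mathcal O_{\mathbb P^1_{\mathcal O_K},\xi_{\mathfrak p}}$, so the residue extension would be inseparable), contradicting \'etaleness over $\xi_{\mathfrak p}\in U$; since no prime is invertible in $\mathcal O_K$ this forces the subcover to be trivial. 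Running the same argument over all finite \'etale covers of $\Spec\mathcal O_K$ shows $\Delta_{U/\Spec\mathcal O_K}$ has no nontrivial \emph{solvable} quotient.

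The main obstacle is the remaining, non-solvable part: wild ramification of $\overline W$ in the special fibers at the finitely many primes $\mathfrak p$ of residue characteristic dividing $n$. (At $\mathfrak p$ with $p\nmid n$ tameness follows from Grothendieck's specialization theorem for the prime-to-$p$ fundamental group of $\mathbb P^1\setminus\{0,1,\infty\}$.) The approach I would take is to compare the geometric generic fiber of $W\to\Spec\mathcal O_K$ with a special one: the compactly-supported Euler characteristic equals $-n$ in every fiber by multiplicativity of $\chi_c$ under finite \'etale maps, and, using the tameness of the cyclic inertia groups at $0,1,\infty$ obtained from the abelian case, one argues via Riemann--Hurwitz that any wild ramification in a special fiber of $\overline W$ is incompatible with this together with the way the geometric generic fiber can degenerate; equivalently, such a cover has good, hence tame, reduction everywhere. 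Making this precise is the technical heart of the matter and is essentially the content of \cite[Theorem~C]{MR1305400}; I would follow Saito's argument there rather than reconstruct it.
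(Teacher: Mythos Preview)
You have a real oversight that turns a two-line argument into an incomplete one. You correctly reach the point where Abhyankar's lemma, combined with the fact that each $D_a\cong\Spec\mathcal O_K$ meets every residue characteristic, forces all ramification indices to be $1$---but you phrase this as conditional on $\overline W$ being tamely ramified along $D_0\cup D_1\cup D_\infty$, and then spend the rest of the proposal trying to establish tameness, handling abelian covers by hand and ultimately deferring the general case to Saito. The point you are missing is that tameness along the horizontal divisors is \emph{automatic}: the generic point of each $D_a$ lies over $\Spec K$ and so has residue characteristic $0$, hence any inertia there is tame by definition. This is exactly how the paper proceeds. The theorem itself is only cited, but \cref{compactification_noinvertibleprime} (via \cref{lem:inertia_proL_triv}) proves the general statement that for any smooth curve $(\mathcal X^{\mathrm{cpt}},\mathcal D)$ over $\mathcal O_{K,S}$ with $\mathbb L(S)=\emptyset$ the map $\Pi_{\mathcal X}\to\Pi_{\mathcal X^{\mathrm{cpt}}}$ is an isomorphism, and $\Pi_{\mathcal X}\to G_{K,S}$ as well when $g=0$; specializing to $S=\emptyset$ and $(\mathbb P^1_{\mathcal O_K},D_0\sqcup D_1\sqcup D_\infty)$ recovers the theorem. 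The proof of \cref{lem:inertia_proL_triv} is precisely your Abhyankar step, prefaced by the single sentence you are missing: ``$\mathcal Y\to\mathcal X$ is tamely ramified along $\mathcal D$, since $K$ is of characteristic $0$.'' With that in place, your Kummer analysis and the appeal to Saito become unnecessary; there is no wild ramification along the $D_a$ to exclude.
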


This theorem may be viewed as an arithmetic analogue of the Lefschetz theorem for projective surfaces over a field (see~\cite{MR1681810} for further details of this view).
As noted in \cite{MR1305400}, this theorem was first proved by T.~Saito, and Y.~Ihara provided an independent proof.

\subsubsection{}
Next, we generalize Theorem~\ref{thm:iharatriv}.
Recall that $\mathbb{L}(S)$ denotes the set of all rational primes that are invertible in the ring $\mathcal{O}_{K,S}$ of $S$-integers (see Notation~\ref{notation:OKS}).
    For any scheme $\mathcal{T}$, we denote by
    \begin{equation*}
        \mathrm{Pt}_{i}(\mathcal{T})
    \end{equation*}
    the set of all codimension $i$ points of $\mathcal{T}$.   
The following proposition was already proved by \cite[Proposition 9.3.6]{bost2022quasiprojectiveformalanalyticarithmeticsurfaces} when $S=\emptyset$:

\begin{definition}
    Let $(\mathcal{X}^{\mathrm{cpt}},\mathcal{D})$ be a smooth curve over $\mathcal{O}_{K,S}$, and set $\mathcal{X}\coloneqq \mathcal{X}^{\mathrm{cpt}}\setminus \mathcal{D}$ (see Definition~\ref{smcurvedef}).
    Let $\Sigma\subset \mathfrak{Primes}_\mathbb{Q}$. 
    Let $f:(\widetilde{\mathcal{X}}^{\Sigma})^{\mathrm{cpt}}\to \mathcal{X}^{\mathrm{cpt}}$ be a maximal connected pro-$\Sigma$ Galois covering of $\mathcal{X}^{\mathrm{cpt}}$ such that it is unramified outside $\mathcal{D}$.
    Write $\tilde{\mathcal{D}}^{\Sigma}$ for the inverse image of $\mathcal{D}$ by $f$.   
    Note that 
    \begin{enumerate}[(a)]
    \item $\tilde{\mathcal{X}}^{\Sigma}\coloneqq(\widetilde{\mathcal{X}}^{\Sigma})^{\mathrm{cpt}}\setminus\tilde{\mathcal{D}}^{\Sigma}$ is identified with the notation defined as in Notation~\ref{notation:etale};
    \item the pro-schems $(\widetilde{\mathcal{X}}^{\Sigma})^{\mathrm{cpt}}$, $\tilde{\mathcal{D}}^{\Sigma}$, and $\tilde{\mathcal{X}}^{\Sigma}$ are schemes by \cite[Tag 01YX]{stacks-project};
    \item when $\Sigma=\mathfrak{Primes}_\mathbb{Q}$, we write $\tilde{D}\coloneqq \tilde{D}^{\Sigma}$ for simplicity.
    \end{enumerate}
    Then, for any point $y\in \mathrm{Pt}_{0}(\tilde{\mathcal{D}}^{\Sigma})$ we set
    \begin{equation*}
        D_{y}=D_{y,\Pi_{\mathcal{X}}^{(\Sigma)}}\coloneqq \{\gamma\in \Pi_{\mathcal{X}}^{(\Sigma)}\mid \gamma(y)=y\},
    \end{equation*}
    \begin{equation*}
        I_{y}=I_{y,\Pi_{\mathcal{X}}^{(\Sigma)}}\coloneqq \{\gamma\in D_{y}\mid\gamma \text{ acts trivially on the residue field }\kappa(y)\},
    \end{equation*}
    and refer to these as the \textit{decomposition subgroup and inertia subgroup} at $y$, respectively.
\end{definition}

\begin{lemma}\label{lem:inertia_proL_triv}
    Let $K$ be a number field and let $S\subset\mathfrak{Primes}_{K}$.
    Let $(\mathcal{X}^{\mathrm{cpt}},\mathcal{D})$ be a smooth curve over $\mathcal{O}_{K,S}$, and
    set $\mathcal{X}\coloneqq \mathcal{X}^{\mathrm{cpt}}\setminus \mathcal{D}$.
    Then the inertia subgroup at every element in $\mathrm{Pt}_{0}(\tilde{\mathcal{D}})$ in $\Pi_{\mathcal{X}}$ is a pro-$\mathbb{L}(S)$ group.
\end{lemma}

\begin{proof}
    By definition, for every rational prime $p\in\mathfrak{Primes}_{\mathbb{Q}}\setminus \mathbb{L}(S)$, there exists a closed point of $\mathcal{D}$
    whose residue field is of characteristic $p$. 
    Let $\mathcal{Y} \to \mathcal{X}$ be a finite Galois covering with Galois group $G(\mathcal{Y}/\mathcal{X})$.
    (Note that $\mathcal{Y}\to\mathcal{X}$ is tamely ramified along $\mathcal{D}$ (see~\cite[Tag 0BSE]{stacks-project} for the definition), since $K$ is of characteristic $0$.)
    By Abhyankar's Lemma (see~\cite[Expos\'e XIII, Proposition 5.2]{MR0354651} or \cite[Tag 0EYG]{stacks-project}), the ramification indices of $\mathcal{Y}\to \mathcal{X}$ are coprime to $p$.
    Therefore, every prime factor of every inertia subgroup in $G(\mathcal{Y}/\mathcal{X})$ lies in $\mathbb{L}(S)$.
    By running over all such coverings $\mathcal{Y}\to\mathcal{X}$, the assertion follows.
\end{proof}

\begin{proposition}\label{compactification_noinvertibleprime}
    Let $K$ be a number field and let $S\subset\mathfrak{Primes}_{K}$.
    Let $(\mathcal{X}^{\mathrm{cpt}},\mathcal{D})$ be a smooth curve of type $(g,r)$ over $\mathcal{O}_{K,S}$, and
    set $\mathcal{X}\coloneqq \mathcal{X}^{\mathrm{cpt}}\setminus \mathcal{D}$.
    Assume that $\mathbb{L}(S)=\emptyset$.
    Then the natural morphism $\Pi_{\mathcal{X}}\to \Pi_{\mathcal{X}^{\mathrm{cpt}}}$ is an isomorphism.
    If, moreover, $g=0$, then the natural morphism $\Pi_{\mathcal{X}}\to G_{K,S}$ is an isomorphism.
\end{proposition}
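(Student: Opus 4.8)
\emph{Plan.} The idea is to deduce both assertions from \cref{lem:inertia_proL_triv} by specializing it to $\mathbb{L}(S) = \emptyset$, in which case a pro-$\mathbb{L}(S)$ group is trivial. First I would handle the open immersion $\mathcal{X} \hookrightarrow \mathcal{X}^{\mathrm{cpt}}$ via purity of the branch locus; then, when $g = 0$, I would combine this with the simple connectedness of a genus-$0$ curve over an algebraically closed field.

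\emph{The morphism $\Pi_{\mathcal{X}} \to \Pi_{\mathcal{X}^{\mathrm{cpt}}}$.} If $\mathcal{D} = \emptyset$ there is nothing to prove, so assume $\mathcal{D} \neq \emptyset$. Since $\mathcal{X}^{\mathrm{cpt}}$ is smooth over the Dedekind scheme $\Spec \mathcal{O}_{K,S}$ it is regular, and it is connected (being smooth and proper with geometrically connected fibres over a connected base), hence integral; the divisor $\mathcal{D}$ is regular of codimension one. As $\mathcal{X}$ is a dense open subscheme of the irreducible normal scheme $\mathcal{X}^{\mathrm{cpt}}$, the pullback to $\mathcal{X}$ of any connected finite étale cover of $\mathcal{X}^{\mathrm{cpt}}$ stays connected, so $\Pi_{\mathcal{X}} \to \Pi_{\mathcal{X}^{\mathrm{cpt}}}$ is surjective. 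For the kernel I would invoke Zariski--Nagata purity of the branch locus: a finite étale cover of $\mathcal{X}$ extends, via normalization of $\mathcal{X}^{\mathrm{cpt}}$, to a finite étale cover of $\mathcal{X}^{\mathrm{cpt}}$ if and only if it is unramified along $\mathcal{D}$; equivalently, $\ker(\Pi_{\mathcal{X}} \to \Pi_{\mathcal{X}^{\mathrm{cpt}}})$ is the closed normal subgroup of $\Pi_{\mathcal{X}}$ topologically generated by the inertia subgroups at the points of $\mathcal{X}^{\mathrm{cpt}}$ lying over the generic points of $\mathcal{D}$. Each generic point of $\mathcal{D}$ is the image of a codimension-$0$ point of the cuspidal divisor $\tilde{\mathcal{D}}^{\mathbb{L}(S)}$ of the maximal geometrically pro-$\mathbb{L}(S)$ cover of $\mathcal{X}$, so \cref{lem:inertia_proL_triv} applied with $\mathbb{L}(S) = \emptyset$ shows that every one of these inertia subgroups is a pro-$\emptyset$ group, i.e. trivial. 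Hence the kernel vanishes and $\Pi_{\mathcal{X}} \xrightarrow{\sim} \Pi_{\mathcal{X}^{\mathrm{cpt}}}$.

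\emph{The case $g = 0$.} By the previous step it suffices to prove that the structure morphism induces an isomorphism $\Pi_{\mathcal{X}^{\mathrm{cpt}}} \xrightarrow{\sim} G_{K,S} = \Pi_{\Spec \mathcal{O}_{K,S}}$. This map is surjective because $\mathcal{X}^{\mathrm{cpt}} \to \Spec \mathcal{O}_{K,S}$ is proper with geometrically connected fibres, and its kernel is $\Delta_{\mathcal{X}^{\mathrm{cpt}}/\mathcal{O}_{K,S}}$. By the homotopy exact sequence for the proper smooth morphism $\mathcal{X}^{\mathrm{cpt}} \to \Spec \mathcal{O}_{K,S}$ (see~\cite[Expos\'e X]{MR0354651}), this kernel is a quotient of $\pi_1^{\mathrm{\acute{e}t}}$ of a geometric fibre; but a geometric fibre is a smooth proper genus-$0$ curve over an algebraically closed field, hence isomorphic to $\mathbb{P}^1$ and therefore simply connected. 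Thus $\Delta_{\mathcal{X}^{\mathrm{cpt}}/\mathcal{O}_{K,S}}$ is trivial, $\Pi_{\mathcal{X}^{\mathrm{cpt}}} \xrightarrow{\sim} G_{K,S}$, and composing with the isomorphism from the previous step gives $\Pi_{\mathcal{X}} \xrightarrow{\sim} G_{K,S}$.

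\emph{Main obstacle.} The genuine content is entirely in \cref{lem:inertia_proL_triv} --- the inertia bound via Abhyankar's lemma, using that $\mathbb{L}(S) = \emptyset$ forces $\Spec \mathcal{O}_{K,S}$ to have closed points of every residue characteristic --- and that is already available, so the rest is formal. The one point needing care is to identify the inertia subgroups supplied by \cref{lem:inertia_proL_triv}, which are indexed by the cusps of the maximal geometrically pro-$\mathbb{L}(S)$ cover, with the full family of inertia subgroups of $\Pi_{\mathcal{X}}$ at the generic points of $\mathcal{D}$ required by the purity description of the kernel; this is immediate once one observes that these inertia subgroups live inside $\Pi_{\mathcal{X}}$ itself and that $\tilde{\mathcal{D}}^{\mathbb{L}(S)}$ surjects onto $\mathcal{D}$.
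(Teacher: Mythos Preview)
Your proof is correct and follows essentially the same approach as the paper: both identify $\ker(\Pi_{\mathcal{X}}\to\Pi_{\mathcal{X}^{\mathrm{cpt}}})$ with the normal closure of the cuspidal inertia subgroups via Zariski--Nagata purity, kill these inertia groups using \cref{lem:inertia_proL_triv} with $\mathbb{L}(S)=\emptyset$, and then handle the $g=0$ case by the simple connectedness of $\mathbb{P}^1$ in the homotopy sequence. The only cosmetic difference is that the paper applies purity on the generic fiber $X^{\mathrm{cpt}}$ over $K$ and then pushes forward to $\Pi_{\mathcal{X}}$ via the surjection $\Delta_{X/K}\twoheadrightarrow\Delta_{\mathcal{X}/\mathcal{O}_{K,S}}$, whereas you apply purity directly on the regular arithmetic surface $\mathcal{X}^{\mathrm{cpt}}$; both are valid and yield the same conclusion.
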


\begin{proof}
    Let $\eta$ be the generic point of $\Spec(\mathcal{O}_{K,S})$. Set $X\coloneqq \mathcal{X}_{\eta}$ and $X^{\mathrm{cpt}}\coloneqq (\mathcal{X}^{\mathrm{cpt}})_{\eta}$.
    By the Zariski--Nagata purity theorem for the branch locus (see \cite[Expos\'e X, Th\'eor\`eme 3.1]{MR0354651} or \cite[Tag 0BMB]{stacks-project}), the kernel $\ker(\Delta_{X/K}\twoheadrightarrow\Delta_{X^{\mathrm{cpt}}/K})$ is generated by the inertia subgroups at the geometric points of $\mathcal{D}_{\eta}=X^{\mathrm{cpt}}\setminus X$ as profinite groups and is mapped onto $\ker(\Pi_{\mathcal{X}}\twoheadrightarrow\Pi_{\mathcal{X}^{\mathrm{cpt}}})$ by the natural morphism $\Delta_{X/K}\to \Pi_{\mathcal{X}}$.
    Therefore, we have the following exact sequence:
    \begin{equation*}
        1\to I
        \to \Pi_{\mathcal{X}}
        \to \Pi_{\mathcal{X}^{\mathrm{cpt}}}
        \to 1,
    \end{equation*}
    where $I$ is the closed subgroup of $\Pi_{\mathcal{X}}$ generated by the inertia subgroups as profinite groups.
    By Lemma~\ref{lem:inertia_proL_triv} and the hypothesis ``$\mathbb{L}(S)=\emptyset$", such inertia subgroups are trivial. Thus, $I$ is also trivial and therefore $\Pi_{\mathcal{X}}\twoheadrightarrow\Pi_{\mathcal{X}^{\mathrm{cpt}}}$ is an isomorphism.
    This completes the proof of the first assertion.
    In addition, assume $g=0$.
    By \cite[Expos\'e XIII, Lemme 4.2, Exemples 4.4]{MR0354651}, we have the following right-exact sequence:
    \begin{equation*}
        \Delta_{X^{\mathrm{cpt}}/K}
        \to \Pi_{\mathcal{X}^{\mathrm{cpt}}}\to G_{K,S}
        \to 1.
    \end{equation*}
    Since $\Delta_{X^{\mathrm{cpt}}/K}$ is trivial, the natural morphism $\Pi_{\mathcal{X}^{\mathrm{cpt}}}\twoheadrightarrow G_{K,S}$ is an isomorphism.
    This completes the proof of the second assertion.
\end{proof}

By using this proposition, we can find counterexamples as follows.

\begin{example}[Counterexamples to the Grothendieck conjecture over the rings of integers of number fields]\label{ex:arithmetic_surface_counterex}
    Let $K$ be a number field.
    Let us construct, for sufficiently large $K$, pairs of hyperbolic curves $\mathcal{X}_{1}$ and $\mathcal{X}_{2}$ over $\Spec(\mathcal{O}_K)$ such that $\mathcal{X}_{1}\not\simeq\mathcal{X}_{2}$ but $\Pi_{\mathcal{X}_{1}}\simeq \Pi_{\mathcal{X}_{2}}$.
    These pairs demonstrate the necessity of the existence of ``$p\in\Sigma$ that is invertible on $\mathcal{S}$'' in the hypothesis \ref{cor:relative_anabelian_conditionsigma0} of Corollary~\ref{cor:relative_anabelian_conditionsigma}.
    \begin{enumerate}[(1)]
        \item
              We first give examples of hyperbolic curves of genus $0$:
              As we have seen in Theorem~\ref{thm:iharatriv}, for any $K$, $(\mathbb{P}^1_{\mathcal{O}_{K}},D_{0}\sqcup D_1\sqcup D_\infty)$ is a smooth curve of type $(0,3)$ over $\mathcal{O}_{K}$ such that
              \begin{equation*}
                  \Pi_{\mathbb{P}^1_{\mathcal{O}_{K}}\setminus D_{0}\sqcup D_1\sqcup D_\infty}
                  \xrightarrow{\sim}
                  \Pi_{\Spec(\mathcal{O}_{K})}.
              \end{equation*}
              Let $p$ and $q$ be rational primes with $p<q$, and $\zeta_{pq}$ a primitive $pq$-th root of unity. Then, for any two distinct elements $a,b \in \{ 1,\zeta_{pq}, \ldots, \zeta_{pq}^{p-1}\}$, the difference $a-b$ is a unit in $\mathbb{Z}[\zeta_{pq}]$ (see \cite[Proposition~2.8]{MR1421575}).
              Hence, for any two distinct elements $a,b \in \{0, 1,\zeta_{pq}, \ldots, \zeta_{pq}^{p-1}, \infty\}$, the horizontal divisors $D_a$ and $D_b$ of $\mathbb{P}^1_{\mathbb{Z}[\zeta_{pq}]}$ do not meet.
              Therefore, the scheme $\mathbb{P}^{1}_{\mathbb{Z}[\zeta_{pq}]}\setminus \bigsqcup_{a} D_a$, where $a$ runs through $\{0, 1,\zeta_{pq}, \ldots, \zeta_{pq}^{p-1}, \infty\}$, is a smooth curve of type $(0,p+2)$ over $\mathbb{Z}[\zeta_{pq}]$
              (equivalently, the smooth curve $\mathbb{P}^{1}_{\mathbb{Q}(\zeta_{pq})}\setminus \bigsqcup_{a} D_a$ over $\mathbb{Q}(\zeta_{pq})$ has everywhere good reduction).
              Hence, by Proposition~\ref{compactification_noinvertibleprime}, we have an isomorphism
              \begin{equation*}
                  \Pi_{\mathbb{P}^{1}_{\mathbb{Z}[\zeta_{pq}]}\setminus \bigsqcup_{a} D_a}
                  \xrightarrow{\sim}
                  \Pi_{\Spec(\mathbb{Z}[\zeta_{pq}])}.
              \end{equation*}
              For instance, setting
              \begin{equation*}
                  \mathcal{X}_{1}=\mathbb{P}^{1}_{\mathbb{Z}[\zeta_{6}]}\setminus D_{0}\sqcup D_1\sqcup D_\infty,
                  \,\mathcal{X}_{2}=\mathbb{P}^{1}_{\mathbb{Z}[\zeta_{6}]}\setminus D_{0}\sqcup D_1\sqcup D_{\zeta_{6}}\sqcup D_\infty
              \end{equation*}
              yields a desired pair as above.
        \item
              It is known that there exists a number field $K$ and a smooth proper curve $X^{\mathrm{cpt}}/K$ of genus $g\geq 2$ with everywhere good reduction (for example, see \cite[Proposition 0.2]{MR3065454} in the case where $g=2$).
              Let $\mathcal{X}^{\mathrm{cpt}}/\mathcal{O}_K$ be a smooth proper model of the above curve $X^{\mathrm{cpt}}/K$. After replacing $K$ by a finite extension if necessary, we may assume that $X^{\mathrm{cpt}}(K)\neq\varnothing$. By properness, any $K$-rational point of $X^{\mathrm{cpt}}$ extends uniquely to a section $s:\Spec(\mathcal{O}_K)\to\mathcal{X}^{\mathrm{cpt}}$.
              Let $\mathcal{D}\coloneqq s(\Spec(\mathcal{O}_K))\subset\mathcal{X}^{\mathrm{cpt}}$ be the corresponding relative effective Cartier divisor.
              Then $(\mathcal{X}^{\mathrm{cpt}},\mathcal{D})$ is a smooth curve of type $(g,1)$ over $\mathcal{O}_K$.
              By Proposition~\ref{compactification_noinvertibleprime}, the canonical morphism
              \begin{equation*}
              \Pi_{\mathcal{X}^{\mathrm{cpt}}\setminus\mathcal{D}} \longrightarrow \Pi_{\mathcal{X}^{\mathrm{cpt}}} 
              \end{equation*} 
              is an isomorphism.
              Thus, setting $\mathcal{X}_{1}=\mathcal{X}^{\mathrm{cpt}}$ and $\mathcal{X}_{2}=\mathcal{X}^{\mathrm{cpt}}\setminus\mathcal{D}$
              yields a desired pair.
              Similarly, to construct a desired pair in the case where $g=1$, it suffices to find a smooth curve $(\mathcal{X}^{\mathrm{cpt}},\mathcal{D})$ of type $(1,2)$ over $\mathcal{O}_K$.
    \end{enumerate}
\end{example}

\begin{remark}[Finiteness of horizontal divisors on hyperbolic curves over $\mathcal{O}_{K,S}$]
    Let $K$ be a number field and let $S\subset\mathfrak{Primes}_{K}$.
    Let $\mathcal{X}$ be a hyperbolic curve over $\mathcal{O}_{K,S}$.
    Then Siegel's theorem and the Mordell conjecture imply that the set $\mathcal{X}(\mathcal{O}_{K,S})$ of $\mathcal{O}_{K,S}$-rational points of $\mathcal{X}$ is finite (see~\cite[Remark D.9.2.2]{MR1745599}).
    Therefore, if $G_{K,S}$ is a finite group (e.g., $K=\mathbb{Q}$ and $S=\emptyset$), then there exist finitely many horizontal divisors of $\mathcal{X}$ that are \'etale over $\mathcal{O}_{K,S}$.
    In particular, there exist finitely many smooth curves over $\mathcal{O}_{K,S}$ obtained from $\mathcal{X}$ by removing a union of horizontal divisors.
    Therefore, to construct pairs as in Example~\ref{ex:arithmetic_surface_counterex}, we have to work within this constraint.
    Moreover, when $K=\mathbb{Q}$ and $S=\emptyset$, every hyperbolic curve over $\mathbb{Z}$ is isomorphic to $\mathbb{P}^{1}_{\mathbb{Z}}\setminus D_{0}\sqcup D_{1}\sqcup D_{\infty}$ (see \cite[Proposition A2]{MR2123370}), and hence we cannot construct a pair over $\mathbb{Z}$.
\end{remark}

\subsubsection{}
At the end of this subsection, we introduce other counterexamples to the Grothendieck conjecture as follows:

\begin{example}[Counterexamples to the Grothendieck conjecture over the rings of integers of $p$-adic fields]\label{ex:p-adic_counterex}
    Let $K$ be a $p$-adic field, $\mathcal{O}_K$ its ring of integers, and $k$ its residue field.
    Note that every rational prime different from $p$ is invertible on $\Spec(\mathcal{O}_K)$, whereas $p$ is not.
    Then for any set $\Sigma$ of rational primes, the scheme $\Spec(\mathcal{O}_K)$ cannot satisfy the condition \ref{cor:relative_anabelian_conditionsigma0} in Corollary~\ref{cor:relative_anabelian_conditionsigma}, since $K$ is sub-$p$-adic only for $p$.
    Moreover, it is known that, for any proper smooth curve $X$ of genus $g\geq 2$ over $k$, there exist two non-isomorphic liftings of $X$ to proper smooth curves $\mathcal{X}_{1}$ and $\mathcal{X}_{2}$ over $\Spec(\mathcal{O}_K)$.
    Indeed, this can be seen by following the proof of \cite[Theorem 22.1]{MR2583634}, together with the fact that $\dim_k H^1(X, T_X) = 3g - 3 > 0$, where $T_X$ denotes the tangent sheaf of $X/k$.
    By \cite[Tag 0A48]{stacks-project}, $\Pi_{\mathcal{X}_{1}}$ and $\Pi_{\mathcal{X}_{2}}$ are isomorphic to $\Pi_X$, and hence these examples demonstrate the failure of the Grothendieck conjecture over $\Spec(\mathcal{O}_K)$.
\end{example}

\section{Semi-absolute version of the Grothendieck conjecture over rings of \texorpdfstring{$S$}{S}-integers}

In this section, we investigate the semi-absolute version of the Grothendieck conjecture for hyperbolic curves over $\mathcal{O}_{K,S}$, where $K$ is a number field and $S\subset\mathfrak{Primes}_{K}$.
Let $\mathbb{L}$ be a set of rational primes that are invertible in $\mathcal{O}_{K,S}$, and fix a rational prime $p\in\mathbb{L}$.
Let $(\mathcal{X}^{\mathrm{cpt}},\mathcal{D})$ be a smooth curve of type $(g,r)$ over $\mathcal{O}_{K,S}$, and
set $\mathcal{X}\coloneqq \mathcal{X}^{\mathrm{cpt}}\setminus \mathcal{D}$.
Let $\eta$ be the generic point of $\Spec(\mathcal{O}_{K,S})$, and set $X\coloneqq \mathcal{X}_{\eta}$.
In the first subsection, we recall the separatedness property for inertia subgroups and then reconstruct the cusps, the pair $(g,r)$, and the cyclotomic character by applying Mochizuki's results.
In the last two subsections, we prove Theorems A and B in the Introduction.

\subsection{Separatedness property for inertia subgroups and Galois-preserving isomorphisms}\label{subsection:Noncommutative_weight_sep_inertia}

\subsubsection{}
In this subsection, we introduce several basic properties that are used repeatedly in the rest of this section.
First, we introduce the separatedness property for inertia subgroups of $\Delta_{\mathcal{X}/\mathcal{O}_{K,S}}^{\mathbb{L}}$.
This separatedness property is classical; see, for instance, \cite[Section~2]{MR1072981} and \cite[Lemma~1.3.7]{MR2059759}.

\begin{lemma}\label{inerrev}
    Let $K$ be a number field, $S\subset\mathfrak{Primes}_{K}$, and let $\mathbb{L}$ be a nonempty set of rational primes that are invertible in $\mathcal{O}_{K,S}$.
    Let $(\mathcal{X}^{\mathrm{cpt}},\mathcal{D})$ be a hyperbolic curve of type $(g,r)$ over $\mathcal{O}_{K,S}$, and set $\mathcal{X}\coloneqq \mathcal{X}^{\mathrm{cpt}}\setminus\mathcal{D}$.
    Then for any $\tilde{\zeta},\tilde{\zeta}'\in\mathrm{Pt}_{0}(\tilde{\mathcal{D}}^{\mathbb{L}})$, the following are equivalent:
    \begin{enumerate}[(a)]
        \item\label{inerrev1}
              $\tilde{\zeta}=\tilde{\zeta}'$.
        \item \label{inerrev2}
              $I_{\tilde{\zeta},\Pi_{\mathcal{X}}^{(\mathbb{L})}}=I_{\tilde{\zeta}',\Pi_{\mathcal{X}}^{(\mathbb{L})}}$.
        \item\label{inerrev3}
              $I_{\tilde{\zeta},\Pi_{\mathcal{X}}^{(\mathbb{L})}}$ and $I_{\tilde{\zeta}',\Pi_{\mathcal{X}}^{(\mathbb{L})}}$ are commensurable (i.e., their intersection is open in both of them).
    \end{enumerate}
    In particular, the normalizer of $I_{\tilde{\zeta},\Pi_{\mathcal{X}}^{(\mathbb{L})}}$ in $\Pi_{\mathcal{X}}^{(\mathbb{L})}$ coincides with
    $D_{\tilde{\zeta},\Pi_{\mathcal{X}}^{(\mathbb{L})}}$.
\end{lemma}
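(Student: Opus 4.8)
The implications \cref{inerrev1}$\Rightarrow$\cref{inerrev2}$\Rightarrow$\cref{inerrev3} are immediate, so the substance of the statement is the implication \cref{inerrev3}$\Rightarrow$\cref{inerrev1} and the final assertion. The plan is to reduce everything to the classical separatedness property for cuspidal inertia subgroups in the geometric fundamental group of a hyperbolic curve over an algebraically closed field of characteristic zero (the kind of statement appearing in \cite[Section 1.2]{MR4578639} and \cite[Lemma 1.11]{MR4745885}).

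The first step is to show that every inertia subgroup $I_{\tilde{\zeta},\Pi_{\mathcal{X}}^{(\mathbb{L})}}$ is already contained in $\Delta\coloneqq\Delta_{\mathcal{X}/\mathcal{O}_{K,S}}^{\mathbb{L}}$, and in fact coincides with the inertia subgroup of $\tilde{\zeta}$ with respect to the intermediate covering $\tilde{\mathcal{X}}^{\mathbb{L}}\to\mathcal{X}\times_{\mathcal{O}_{K,S}}\tilde{\mathcal{S}}$, where $\tilde{\mathcal{S}}\to\Spec(\mathcal{O}_{K,S})$ denotes the maximal Galois (pro-\'etale) covering. Indeed, the codimension-one point $\zeta$ of $\mathcal{X}^{\mathrm{cpt}}$ lying below $\tilde{\zeta}$ maps to the generic point of $\Spec(\mathcal{O}_{K,S})$, since $\mathcal{D}$ is finite \'etale over the base; hence the base-changed morphism $\mathcal{X}^{\mathrm{cpt}}\times_{\mathcal{O}_{K,S}}\tilde{\mathcal{S}}\to\mathcal{X}^{\mathrm{cpt}}$, being pro-\'etale, is unramified at $\zeta$, so the inertia of $\tilde{\zeta}$ over $\zeta$ equals the inertia of $\tilde{\zeta}$ over its image in $\mathcal{X}^{\mathrm{cpt}}\times_{\mathcal{O}_{K,S}}\tilde{\mathcal{S}}$, which lies in $\Gal(\tilde{\mathcal{X}}^{\mathbb{L}}/\mathcal{X}\times_{\mathcal{O}_{K,S}}\tilde{\mathcal{S}})=\Delta$. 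Then \cref{lem:stix_exact} identifies $\Delta$ with $\Delta_{X/K}^{\mathbb{L}}$, the maximal pro-$\mathbb{L}$ quotient of the geometric fundamental group of the generic fiber $X=\mathcal{X}_{\eta}$ (a hyperbolic curve of type $(g,r)$ over $K$); under this identification the covering $\tilde{\mathcal{X}}^{\mathbb{L}}\to\mathcal{X}\times_{\mathcal{O}_{K,S}}\tilde{\mathcal{S}}$ becomes the maximal pro-$\mathbb{L}$ covering of the geometric generic fiber, and the points $\tilde{\zeta}$ with their inertia subgroups become the cusps of this covering with their cuspidal inertia subgroups.

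It therefore suffices to prove the equivalence of \cref{inerrev1,inerrev2,inerrev3} for cuspidal inertia subgroups in $\Delta_{X/K}^{\mathbb{L}}$. If $r=0$ there are no cusps and there is nothing to prove; so assume $r\geq1$. By \cite[Expos\'e XIII, Corollaire 2.12]{MR0354651}, $\Delta_{X/K}^{\mathbb{L}}$ is then free pro-$\mathbb{L}$ of rank $2g+r-1\geq2$ (rank at least $2$ by hyperbolicity), and in such a group each cuspidal inertia subgroup is a maximal procyclic subgroup, which is self-normalizing and is the unique maximal procyclic subgroup containing any one of its nontrivial elements. Consequently, two commensurable cuspidal inertia subgroups share a nontrivial element and hence coincide, which gives \cref{inerrev3}$\Rightarrow$\cref{inerrev2}; and two equal cuspidal inertia subgroups correspond either to cusps lying over the same cusp of $X$ — in which case, under the transitive action, they give the same coset of a fixed cuspidal inertia subgroup and hence the same point $\tilde{\zeta}$ — or to cusps lying over distinct cusps of $X$, which is impossible because distinct cusps have non-conjugate cuspidal inertia subgroups (their images in the abelianization $\Delta_{X/K}^{\mathrm{ab},\mathbb{L}}$ already differ); this gives \cref{inerrev2}$\Rightarrow$\cref{inerrev1}. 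Finally, for the last assertion, $D_{\tilde{\zeta},\Pi_{\mathcal{X}}^{(\mathbb{L})}}$ normalizes $I_{\tilde{\zeta},\Pi_{\mathcal{X}}^{(\mathbb{L})}}$ because conjugation by an element fixing $\tilde{\zeta}$ preserves the inertia at $\tilde{\zeta}$; conversely, if $\gamma$ normalizes $I_{\tilde{\zeta},\Pi_{\mathcal{X}}^{(\mathbb{L})}}$ then $I_{\gamma(\tilde{\zeta}),\Pi_{\mathcal{X}}^{(\mathbb{L})}}=\gamma\, I_{\tilde{\zeta},\Pi_{\mathcal{X}}^{(\mathbb{L})}}\,\gamma^{-1}=I_{\tilde{\zeta},\Pi_{\mathcal{X}}^{(\mathbb{L})}}$, so $\gamma(\tilde{\zeta})=\tilde{\zeta}$ by \cref{inerrev2}$\Rightarrow$\cref{inerrev1}, i.e.\ $\gamma\in D_{\tilde{\zeta},\Pi_{\mathcal{X}}^{(\mathbb{L})}}$.

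The step I expect to demand the most care is the reduction in the second paragraph: precisely matching the cusp set $\mathrm{Pt}_{0}(\tilde{\mathcal{D}}^{\mathbb{L}})$ of the scheme-theoretic covering $\tilde{\mathcal{X}}^{\mathbb{L}}$ — which mixes the arithmetic direction (the quotient $G_{K,S}$ of $\Pi_{\mathcal{X}}^{(\mathbb{L})}$) and the geometric pro-$\mathbb{L}$ direction — with the cuspidal data of the maximal pro-$\mathbb{L}$ covering of the geometric generic fiber, in a way compatible with the corresponding inertia subgroups. The group-theoretic facts about free pro-$\mathbb{L}$ groups used in the third paragraph are standard and underlie the related separatedness statements in the references cited above.
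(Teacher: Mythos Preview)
Your proof is correct and follows the same overall strategy as the paper: reduce the separatedness of cuspidal inertia to the known case of a hyperbolic curve over a field via \cref{lem:stix_exact}, then deduce the normalizer statement by the identical argument $I_{\tilde\zeta}=nI_{\tilde\zeta}n^{-1}=I_{n\tilde\zeta}\Rightarrow n\tilde\zeta=\tilde\zeta$. The only difference is how much is unpacked. The paper reduces to the arithmetic fundamental group $\Pi_X^{(\mathbb{L})}$ of the generic fiber (identifying $\mathrm{Pt}_0(\tilde{\mathcal{D}}^{\mathbb{L}})$ with the closed points of $(\tilde{\mathcal{D}}^{\mathbb{L}})_\eta$ compatibly with the group actions) and then simply cites \cite[Lemma~1.3.7]{MR2059759} for the equivalence there. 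You instead push one step further to the geometric group $\Delta_{X/K}^{\mathbb{L}}$, by observing that inertia at horizontal cusps already lies in $\Delta$, and then argue separatedness directly from properties of free pro-$\mathbb{L}$ groups of rank $\ge 2$. Your version is more self-contained; the paper's is terser. The step you flagged as most delicate---matching the cusp set $\mathrm{Pt}_0(\tilde{\mathcal{D}}^{\mathbb{L}})$ with the cuspidal data over the field---is exactly the content the paper packages into its one-line identification, so your caution is well placed but the point is standard.
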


\begin{proof}
    Let $\eta$ be the generic point of $\Spec(\mathcal{O}_{K,S})$, set $X\coloneqq\mathcal{X}_{\eta}$, and set $X^{\mathrm{cpt}}\coloneqq(\mathcal X^{\mathrm{cpt}})_{\eta}$.
    By the natural morphism
    \[
        \Delta_{X/K}^{\mathbb L}
        \xrightarrow{\sim}
        \Delta_{\mathcal X/\mathcal O_{K,S}}^{\mathbb L}.
    \]
    in Proposition~\ref{lem:stix_exact}, we identify $\Delta_{\mathcal X/\mathcal O_{K,S}}^{\mathbb L}$ with $\Delta_{X/K}^{\mathbb L}$.
    Under this identification, the elements of $\mathrm{Pt}_{0}(\tilde{\mathcal D}^{\mathbb L})$ are identified with the open edges of the semi-graph of anabelioids of pro-$\mathbb L$ PSC-type associated to the pointed smooth curve $(X^{\mathrm{cpt}},\mathcal D_{\eta})_{\overline K}$.
    Thus \cite[Proposition~1.2(i)]{MR2365351} applies to the corresponding cuspidal edge-like subgroups of the geometric fundamental group.
    It implies that two such inertia subgroups are commensurable if and only if they arise from the same cusp.
    This proves the equivalence of \ref{inerrev1}, \ref{inerrev2}, and \ref{inerrev3}.
    The last assertion follows from these equivalences.
\end{proof}

\subsubsection{}
Next, we introduce a property for isomorphisms of \'etale fundamental groups called \textit{Galois-preserving}, which is often used in anabelian geometry (e.g., \cite[Definition~3.1(ii)]{MR3676682}), and show a basic property of Galois-preserving isomorphisms.

\begin{definition}\label{def:GP}
    Let $i$ range over $\{1,2\}$.
    Let $K_i$ be a number field, $S_i\subset \mathfrak{Primes}_{K_i}$, and let $\mathbb{L}_i$ be a nonempty set of rational primes invertible in $\mathcal{O}_{K_i,S_i}$.
    Let $\mathcal{X}_i$ be a smooth curve over $\mathcal{O}_{K_i,S_i}$.
    Consider the homotopy exact sequence
    \begin{equation*}
        1\longrightarrow \Delta_{\mathcal{X}_i/\mathcal{O}_{K_{i},S_i}}^{\mathbb{L}_i}
        \longrightarrow \Pi_{\mathcal{X}_i}^{(\mathbb{L}_i)}
        \longrightarrow G_{K_i,S_i}
        \longrightarrow 1.
    \end{equation*}
    We say that an isomorphism $\Phi:\Pi_{\mathcal{X}_1}^{(\mathbb{L}_1)}\xrightarrow{\sim}\Pi_{\mathcal{X}_2}^{(\mathbb{L}_2)}$ is \textit{Galois-preserving} if
    \[
        \Phi\bigl(\Delta_{\mathcal{X}_1/\mathcal{O}_{K_{1},S_1}}^{\mathbb{L}_1}\bigr)
        =
        \Delta_{\mathcal{X}_2/\mathcal{O}_{K_{2},S_2}}^{\mathbb{L}_2}.
    \]
    In this case, $\Phi$ induces an isomorphism $\Phi_G:G_{K_1,S_1}\xrightarrow{\sim}G_{K_2,S_2}$.
    We denote by 
    \[
    \Isom^{\mathrm{GP}}\bigl(\Pi^{(\mathbb{L}_{1})}_{\mathcal{X}_1}, \Pi^{(\mathbb{L}_{2})}_{\mathcal{X}_2}\bigr)
    \] the set of all Galois-preserving isomorphisms, and set
    \begin{equation*}
        \Isom^{\mathrm{GP},\Out}\bigl(\Pi^{(\mathbb{L}_{1})}_{\mathcal{X}_1}, \Pi^{(\mathbb{L}_{2})}_{\mathcal{X}_2}\bigr)
        \coloneqq
        \Isom^{\mathrm{GP}}\bigl(\Pi^{(\mathbb{L}_{1})}_{\mathcal{X}_1}, \Pi^{(\mathbb{L}_{2})}_{\mathcal{X}_2}\bigr)/\Inn(\Delta_{\mathcal{X}_2/\mathcal{O}_{K_{2},S_2}}^{\mathbb{L}_2}),
    \end{equation*}
    where $\Inn(\Delta_{\mathcal{X}_2/\mathcal{O}_{K_{2},S_2}}^{\mathbb{L}_2})$ denotes the subgroup of
    $\Isom_{G_{K_{2},S_{2}}}\bigl(\Pi^{(\mathbb{L}_{2})}_{\mathcal{X}_2}, \Pi^{(\mathbb{L}_{2})}_{\mathcal{X}_2}\bigr)$ consisting of inner automorphisms given by conjugation by elements of $\Delta_{\mathcal{X}_2/\mathcal{O}_{K_{2},S_2}}^{\mathbb{L}_2}$.
\end{definition}

\begin{lemma}\label{gallem}
    We keep the notation of Definition~\ref{def:GP}.
    Let $\Phi:\Pi_{\mathcal{X}_1}^{(\mathbb{L}_1)}\xrightarrow{\sim}\Pi_{\mathcal{X}_2}^{(\mathbb{L}_2)}$ be a Galois-preserving isomorphism.
    Then the following hold:
    \begin{enumerate}[(1)]
        \item\label{gallem1}
              For each $i\in\{1,2\}$, the following are equivalent:
              \begin{enumerate}[(a)]
                  \item\label{gallem1-1}
                        $\mathcal{X}_i$ is hyperbolic.
                  \item\label{gallem1-2}
                        $\Delta_{\mathcal{X}_i/\mathcal{O}_{K_i,S_i}}^{\mathbb{L}_i}$ is not abelian.
              \end{enumerate}
              In particular, $\mathcal{X}_1$ is hyperbolic if and only if $\mathcal{X}_2$ is hyperbolic.
        \item\label{gallem2}
              If at least one of $\Delta_{\mathcal{X}_1/\mathcal{O}_{K_1,S_1}}^{\mathbb{L}_1}$ and $\Delta_{\mathcal{X}_2/\mathcal{O}_{K_2,S_2}}^{\mathbb{L}_2}$ is not abelian, then $\mathbb{L}_{1}=\mathbb{L}_{2}$.
    \end{enumerate}
\end{lemma}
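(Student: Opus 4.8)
The plan is to prove \cref{gallem1} first, since \cref{gallem2} will follow from it together with weight theory. For \cref{gallem1}, the key point is that a Galois-preserving isomorphism $\Phi$ restricts to an isomorphism $\Delta_{\mathcal{X}_1/\mathcal{O}_{K_1,S_1}}^{\mathbb{L}_1}\xrightarrow{\sim}\Delta_{\mathcal{X}_2/\mathcal{O}_{K_2,S_2}}^{\mathbb{L}_2}$ of abstract profinite groups. By \cref{lem:stix_exact}, for each $i$ there is a natural isomorphism $\Delta_{X_i/K_i}^{\mathbb{L}_i}\xrightarrow{\sim}\Delta_{\mathcal{X}_i/\mathcal{O}_{K_i,S_i}}^{\mathbb{L}_i}$, where $X_i$ denotes the generic fiber; so it suffices to work with the geometric fundamental groups $\Delta_{X_i/K_i}^{\mathbb{L}_i}$ of smooth curves over algebraically closed fields (after base change to $\overline{K_i}$). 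By \cref{basicconditioneq}\cref{basicconditioneq2}, $\overline{X_i}$ is hyperbolic if and only if $\Delta_{\overline{X_i}/\overline{K_i}}^{\mathbb{L}_i}$ is not abelian — this is exactly the equivalence \cref{gallem1-1}$\Leftrightarrow$\cref{gallem1-2} once we note that $\mathcal{X}_i$ is hyperbolic iff $X_i$ is hyperbolic iff $\overline{X_i}$ is hyperbolic (by the definition of hyperbolicity via the type $(g,r)$, which is insensitive to base change). The ``in particular'' clause is then immediate: $\Phi|_{\Delta}$ being an isomorphism of abstract profinite groups, one is abelian iff the other is, so $\mathcal{X}_1$ is hyperbolic iff $\mathcal{X}_2$ is.

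For \cref{gallem2}, assume (without loss of generality, by \cref{gallem1}) that both $\Delta_{\mathcal{X}_i/\mathcal{O}_{K_i,S_i}}^{\mathbb{L}_i}$ are non-abelian, i.e. both $\mathcal{X}_i$ are hyperbolic. The strategy is to recover the set $\mathbb{L}_i$ group-theoretically from $\Delta_{\mathcal{X}_i/\mathcal{O}_{K_i,S_i}}^{\mathbb{L}_i}$ alone, using \cref{basicconditioneq}. Indeed, by \cref{lem:stix_exact} and \cref{basicconditioneq}, $\Delta_{\mathcal{X}_i/\mathcal{O}_{K_i,S_i}}^{\mathbb{L}_i}$ is a free pro-$\mathbb{L}_i$ group of rank $2g_i+r_i-1$ (if $r_i\geq 1$) or a pro-$\mathbb{L}_i$ surface group of genus $g_i$ (if $r_i=0$); in either case, since $\mathcal{X}_i$ is hyperbolic, its abelianization $\Delta_{\mathcal{X}_i/\mathcal{O}_{K_i,S_i}}^{\mathrm{ab},\mathbb{L}_i}$ is a free pro-$\mathbb{L}_i$ abelian group of rank $\geq 1$ (in fact $\geq 2g+r-1\geq 2$ for $r\geq 1$ hyperbolic, and $\geq 2g \geq 4$ for $r=0$ hyperbolic). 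Now $\Phi|_\Delta$ induces an isomorphism $\Delta_{\mathcal{X}_1/\mathcal{O}_{K_1,S_1}}^{\mathrm{ab},\mathbb{L}_1}\xrightarrow{\sim}\Delta_{\mathcal{X}_2/\mathcal{O}_{K_2,S_2}}^{\mathrm{ab},\mathbb{L}_2}$, i.e. $\mathbb{Z}_{\ell}^{\oplus n_1}\cong \prod_{\ell\in\mathbb{L}_1}\mathbb{Z}_\ell^{\oplus n_1}$ becomes isomorphic to $\prod_{\ell\in\mathbb{L}_2}\mathbb{Z}_\ell^{\oplus n_2}$ with $n_i\geq 1$. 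A nontrivial free $\mathbb{Z}_\ell$-module has $\ell$-divisible torsion-free part behaving differently for different $\ell$; concretely, $\mathbb{L}_i = \{\ell : \Delta_{\mathcal{X}_i/\mathcal{O}_{K_i,S_i}}^{\mathrm{ab},\mathbb{L}_i}\text{ is not }\ell\text{-divisible}\}$, equivalently $\{\ell : \Delta^{\mathrm{ab},\mathbb{L}_i}\otimes\mathbb{Z}/\ell\neq 0\}$, a purely group-theoretic invariant preserved by any abstract isomorphism. Hence $\mathbb{L}_1=\mathbb{L}_2$.

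The main obstacle is the bookkeeping in \cref{gallem2}: one must be slightly careful that the abelianization is \emph{nontrivial} (which is where hyperbolicity is used, via \cref{basicconditioneq}\cref{basicconditioneq1}, so that the rank $n_i\geq 1$ and the ``not $\ell$-divisible'' criterion actually detects membership in $\mathbb{L}_i$), and that one is comparing the full abelianizations as abstract groups rather than accidentally invoking a Galois structure. There is no deep content here — all the geometric input is packaged in \cref{lem:stix_exact} and \cref{basicconditioneq} — but one should state clearly that for a free pro-$\mathbb{L}$ abelian group $M$ of positive rank, $\ell\in\mathbb{L}$ iff $M/\ell M\neq 0$, and that this characterization is manifestly invariant under isomorphism of abstract profinite groups. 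I would write the argument so that \cref{gallem1} and \cref{gallem2} share the reduction to geometric fundamental groups via \cref{lem:stix_exact}, invoking \cref{basicconditioneq}\cref{basicconditioneq2} for the former and \cref{basicconditioneq}\cref{basicconditioneq1} (plus the explicit rank formula in its proof) for the latter.
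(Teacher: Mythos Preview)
Your approach to \cref{gallem2} is essentially the paper's, just made more explicit: the paper observes that $\Delta_{\mathcal{X}_i/\mathcal{O}_i}^{\mathrm{ab},p_i}$ is nontrivial for each $p_i\in\mathbb{L}_i$ and concludes, which is exactly your ``$\ell\in\mathbb{L}_i$ iff $M/\ell M\neq 0$'' criterion.

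For \cref{gallem1}, however, there is a circularity you need to repair. \cref{lem:stix_exact} is stated only for \emph{hyperbolic} curves, so you may not invoke the isomorphism $\Delta_{X_i/K_i}^{\mathbb{L}_i}\xrightarrow{\sim}\Delta_{\mathcal{X}_i/\mathcal{O}_{K_i,S_i}}^{\mathbb{L}_i}$ before you know $\mathcal{X}_i$ is hyperbolic; doing so makes the implication \cref{gallem1-2}$\Rightarrow$\cref{gallem1-1} beg the question. The paper treats the two directions asymmetrically: for \cref{gallem1-1}$\Rightarrow$\cref{gallem1-2} it uses \cref{lem:stix_exact} as you do, but for \cref{gallem1-2}$\Rightarrow$\cref{gallem1-1} it uses only the \emph{surjection} $\Delta_{X_i/K_i}^{\mathbb{L}_i}\twoheadrightarrow\Delta_{\mathcal{X}_i/\mathcal{O}_{K_i,S_i}}^{\mathbb{L}_i}$, which holds for any smooth curve by right-exactness of the homotopy sequence and requires no hyperbolicity hypothesis. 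If the target is non-abelian then so is the source, whence $X_i$---and therefore $\mathcal{X}_i$---is hyperbolic by \cref{basicconditioneq}\cref{basicconditioneq2}. Once you substitute this surjectivity argument for your premature appeal to \cref{lem:stix_exact}, the rest of your plan goes through unchanged.
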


\begin{proof}
    Let $\eta_{i}$ be the generic point of $\Spec(\mathcal{O}_{K_{i},S_{i}})$.
    For simplicity, set $\mathcal{O}_{i}\coloneqq \mathcal{O}_{K_i,S_i}$ and $X_{i}\coloneqq \mathcal{X}_{i,\eta_{i}}$.

    \noindent
    \ref{gallem1}
    By the definition, $\mathcal{X}_{i}$ is hyperbolic if and only if $X_{i}$ is hyperbolic.
    Hence, by Lemma~\ref{basicconditioneq}, $\mathcal{X}_{i}$ is hyperbolic if and only if $\Delta_{X_{i}/K_{i}}^{\mathbb{L}_{i}}$ is not abelian.
    If $\mathcal{X}_{i}$ is hyperbolic, then Proposition~\ref{lem:stix_exact} implies that $\Delta_{X_{i}/K_{i}}^{\mathbb{L}_{i}}\xrightarrow{\sim}\Delta_{\mathcal{X}_{i}/\mathcal{O}_{i}}^{\mathbb{L}_{i}}$, and hence $\Delta_{\mathcal{X}_{i}/\mathcal{O}_{i}}^{\mathbb{L}_{i}}$ is also not abelian.
    If $\Delta_{\mathcal{X}_i/\mathcal{O}_{i}}^{\mathbb{L}_i}$ is not abelian, then, by the surjectivity of the natural morphism $\Delta_{X_{i}/K_{i}}^{\mathbb{L}_{i}}\twoheadrightarrow \Delta_{\mathcal{X}_{i}/\mathcal{O}_{i}}^{\mathbb{L}_{i}}$, so is $\Delta_{X_i/K_i}^{\mathbb{L}_i}$.
    Therefore, $\mathcal{X}_i$ is hyperbolic.
    The second assertion follows immediately from the first assertion and the Galois-preserving property of $\Phi$.

    \noindent
    \ref{gallem2}
    Since $\Phi$ is Galois-preserving, $\Delta_{\mathcal{X}_1/\mathcal{O}_{1}}^{\mathbb{L}_1}$ and $\Delta_{\mathcal{X}_2/\mathcal{O}_{2}}^{\mathbb{L}_2}$ are isomorphic, and hence both of them are not abelian.
    In particular, by Lemma~\ref{basicconditioneq}, we have that $\Delta_{\mathcal{X}_{i}/\mathcal{O}_{i}}^{\mathrm{ab},p_{i}}$ is nontrivial for $p_{i}\in\mathbb{L}_{i}$.
    Hence the assertion follows.
\end{proof}

\begin{proposition}\label{cor:recon_sev}
    We keep the notation of Definition~\ref{def:GP}.
    Let $\Phi:\Pi_{\mathcal{X}_1}^{(\mathbb{L}_1)}\xrightarrow{\sim}\Pi_{\mathcal{X}_2}^{(\mathbb{L}_2)}$ be a Galois-preserving isomorphism and let $\Phi_G:G_{K_1,S_1}\xrightarrow{\sim}G_{K_2,S_2}$ be the isomorphism induced by $\Phi$.
    Assume the following condition:
    \begin{enumerate}[(a)]
        \item\label{cor:recon_sevas2}
              At least one of $\Delta_{\mathcal{X}_{1}/\mathcal{O}_{K_{1},S_{1}}}^{\mathbb{L}_{1}}$ and $\Delta_{\mathcal{X}_{2}/\mathcal{O}_{K_{2},S_{2}}}^{\mathbb{L}_{2}}$ is not abelian.
    \end{enumerate}
    Then the following hold:
    \begin{enumerate}[(1)]
        \item\label{cor:recon_sev3}
              The isomorphism $\Phi$ induces a bijection
              \begin{equation}\label{hthgargrwgerag}
                  \mathrm{Pt}_{0}(\tilde{\mathcal{D}}_{1}^{\mathbb{L}_{1}})
                  \xrightarrow{\sim}
                  \mathrm{Pt}_{0}(\tilde{\mathcal{D}}_{2}^{\mathbb{L}_{2}})
              \end{equation}
              that is compatible with the actions of $\Delta_{\mathcal{X}_{1}/\mathcal{O}_{K_1,S_1}}^{\mathbb{L}_{1}}$ and $\Delta_{\mathcal{X}_{2}/\mathcal{O}_{K_2,S_2}}^{\mathbb{L}_{2}}$ relative to $\Phi$.
              In particular, $\Phi$ preserves inertia subgroups and cuspidal decomposition subgroups of $\Pi_{\mathcal{X}_{i}}^{(\mathbb{L}_{i})}$.
        \item \label{recogreq}
        Let $\mathcal{X}_{i}$ be of type $(g_i,r_i)$. 
        Then  $g_{1}=g_{2}$ and $r_{1}=r_{2}$ hold.
        \item\label{cor:recon_sev4}
              Let $\chi^{\mathbb{L}_{i}\text{-}\mathrm{cycl}}_{i}:G_{K_{i},S_{i}}\to (\hat{\mathbb{Z}}^{\mathbb{L}_i})^{\times}$ be the $\mathbb{L}_{i}$-adic cyclotomic character of $G_{K_{i},S_{i}}$.
              Then
              \[
                  \chi^{\mathbb{L}_{2}\text{-}\mathrm{cycl}}_{2}\circ \Phi_{G}=\chi^{\mathbb{L}_{1}\text{-}\mathrm{cycl}}_{1}
              \]
              holds.
    \end{enumerate}
\end{proposition}

\begin{proof}
    The hypothesis \ref{cor:recon_sevas2} and Lemma~\ref{gallem}\ref{gallem1} imply that both $\mathcal{X}_{1}$ and $\mathcal{X}_{2}$ are hyperbolic.
    Moreover, Lemma~\ref{gallem}\ref{gallem2} implies $\mathbb{L}_{1}=\mathbb{L}_{2}$.
    We write this set as $\mathbb{L}$ for simplicity.

    \noindent
    \ref{cor:recon_sev3}
    For each $i$, let $\eta_i$ be the generic point of $\Spec(\mathcal O_{K_i,S_i})$, set $X_i\coloneqq\mathcal X_{i,\eta_i}$, and set $X_i^{\mathrm{cpt}}\coloneqq(\mathcal X_i^{\mathrm{cpt}})_{\eta_i}$.
    Via Proposition~\ref{lem:stix_exact}, the group $\Delta_{\mathcal{X}_{i}/\mathcal{O}_{K_i,S_i}}^{\mathbb L}$ is identified with the geometrically pro-$\mathbb L$ fundamental group of $X_i$.
    Equivalently, it is the fundamental group of the semi-graph of anabelioids of pro-$\mathbb L$ PSC-type associated to the pointed smooth curve $(X_i^{\mathrm{cpt}},\mathcal D_{i,\eta_i})_{\overline K_i}$.
    Choose a rational prime $p\in\mathbb L$.
    Since $p$ is invertible in $\mathcal O_{K_i,S_i}$, every prime of $K_i$ above $p$ belongs to $S_i$.
    Hence the usual $p$-adic cyclotomic character factors through $G_{K_i,S_i}$ and its image is open in $\mathbb Z_{p}^{\times}$, i.e.,  the two outer actions are $p$-cyclotomically full in the sense of \cite[Definition~2.3(ii)]{MR2365351}.
    Applying \cite[Corollary~2.7(i)]{MR2365351}, we obtain that the restriction of $\Phi$ on the geometric part is group-theoretically cuspidal.
    Thus, $\Phi$ preserves inertia subgroups.
    By Lemma~\ref{inerrev}, this gives the desired bijection \eqref{hthgargrwgerag}.
    Moreover, the corresponding decomposition subgroups are precisely the normalizers of these inertia subgroups by Lemma~\ref{inerrev}.
    Hence $\Phi$ also preserves cuspidal decomposition subgroups.
    This completes the proof.

    \noindent
    \ref{recogreq}
    The bijection in \ref{cor:recon_sev3} implies $r_{1}=r_{2}$.
    Choose a rational prime $p\in\mathbb{L}$.
    The isomorphism $\Phi$ induces an isomorphism between $\Delta_{\mathcal{X}_{1}/\mathcal{O}_{K_1,S_1}}^{\mathrm{ab},p}$ and $\Delta_{\mathcal{X}_{2}/\mathcal{O}_{K_2,S_2}}^{\mathrm{ab},p}$.
    Hence we obtain  
    \[
        \dim_{\mathbb{Q}_{p}}\bigl(\Delta_{\mathcal{X}_{1}/\mathcal{O}_{K_1,S_1}}^{\mathrm{ab},p}\otimes_{\mathbb{Z}_{p}}\mathbb{Q}_{p}\bigr)
        =
        \dim_{\mathbb{Q}_{p}}\bigl(\Delta_{\mathcal{X}_{2}/\mathcal{O}_{K_2,S_2}}^{\mathrm{ab},p}\otimes_{\mathbb{Z}_{p}}\mathbb{Q}_{p}\bigr).
    \]
    Since this common dimension is $2g_i+r_i-\varepsilon_i$, where $\varepsilon_i=1$ if $r_i\geq 1$ and $\varepsilon_i=0$ if $r_i=0$, the equality $r_{1}=r_{2}$ implies $g_{1}=g_{2}$.

    \noindent
    \ref{cor:recon_sev4}
    The assertion follows from \ref{cor:recon_sev3} and \cite[Lemma~2.1]{MR2365351}.
\end{proof}

\subsection{Proof of the semi-absolute Grothendieck conjecture over rings of \texorpdfstring{$S$}{S}-integers with sufficiently large upper Dirichlet densities}\label{subsection:proof_of_shimizu_type_result}

\subsubsection{}
By the Neukirch--Uchida theorem, the semi-absolute and relative versions of the Grothendieck conjecture over spectra of number fields are equivalent.
In this section, from Theorem~\ref{thm:relative_anabelian} and the Neukirch--Uchida type result {\cite[Theorem 3.4 and Remark 4.7]{MR4626871}} for $\mathcal{O}_{K,S}$, we obtain an affirmative result for the semi-absolute version of the Grothendieck conjecture over $\mathcal{O}_{K,S}$ when $S$ is sufficiently large (see~Theorem~\ref{thm:semi-absolute-main} below).
First, we recall the following theorem:

\begin{theorem}[{\cite[Theorem 3.4 and Remark 4.7]{MR4626871}}]\label{Thm:shimizurestriction}
    Let $i$ range over $\{1,2\}$.
    Let $K_{i}$ be a number field, and let $S_{i}\subset \mathfrak{Primes}_{K_{i}}$.
    Let $\tilde{\mathcal{O}}_{K_{i},S_{i}}$ be the normalization of $\mathcal{O}_{K_i,S_i}$ in a maximal Galois extension of $K_i$ unramified outside $S_i$.
    Assume the following conditions:
    \begin{enumerate}[(a)]
        \item\label{Thm:shimizurestriction1}
              $\#\mathbb{L}(S_{1})\geq 2$ and $\#\mathbb{L}(S_{2})\geq 2$.
        \item\label{Thm:shimizurestriction2}
              For at least one index $j\in\{1,2\}$ and every finite Galois extension $L_j/K_j$ unramified outside $S_{j}$,
              \begin{equation*}
                  \delta_{\mathrm{sup}}\bigl(\mathbb{L}(S_{j})\cap \mathrm{cs}(L_j/\mathbb{Q})\bigr)>0
              \end{equation*}
              holds, where $\mathrm{cs}(L_j/\mathbb{Q})$ denotes the set of all rational primes that split completely in $L_j/\mathbb{Q}$ and $\delta_{\mathrm{sup}}(\ast)$ denotes the upper Dirichlet density of $\ast$, see~\cite[Notations]{MR4402495}.
        \item\label{Thm:shimizurestriction3}
              Set $k\in\{1,2\}\setminus\{j\}$, where $j$ denotes the index in \ref{Thm:shimizurestriction2}.
              Then there exists a rational prime
              $\ell\in\mathbb{L}(S_{1})\cap\mathbb{L}(S_{2})$ such that $S_{k}$ satisfies the condition $(\star_\ell)$ in \cite[Definition~1.16]{MR4402495}.
    \end{enumerate}
    Then the canonical map
    \begin{equation*}
        \Isom\bigl(\tilde{\mathcal{O}}_{K_{2},{S_2}}/\mathcal{O}_{{K_2},S_2},\,\tilde{\mathcal{O}}_{K_{1}, S_1}/\mathcal{O}_{{K_1},S_1}\bigr)
        \longrightarrow
        \Isom\bigl(G_{K_1,S_1},\,G_{K_2,S_2}\bigr)
    \end{equation*}
    is bijective.
\end{theorem}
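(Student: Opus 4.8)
This statement is \cite[Theorem~3.4 and Remark~4.7]{MR4626871}; the plan is to recall the shape of that proof, which is a Neukirch--Uchida type reconstruction adapted to the setting of restricted ramification. Injectivity is the routine part: two isomorphisms of the covers inducing the same isomorphism $G_{K_{1},S_{1}}\xrightarrow{\sim}G_{K_{2},S_{2}}$ differ by an $\mathcal{O}_{K_{1},S_{1}}$-automorphism of $\tilde{\mathcal{O}}_{K_{1},S_{1}}$ that centralizes the deck group $G_{K_{1},S_{1}}$, hence lies in $Z(G_{K_{1},S_{1}})$, which one checks is trivial under \cref{Thm:shimizurestriction1}; so the two isomorphisms agree. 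The content is therefore surjectivity, and for that I would fix $\sigma\colon G_{K_{1},S_{1}}\xrightarrow{\sim}G_{K_{2},S_{2}}$ and proceed in four stages.

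First, characterize group-theoretically the decomposition and inertia subgroups at the primes of $S_{i}$ — the local--global cohomological criteria of Neukirch go through here, since the decomposition groups at the places of $S_{i}$ remain rich enough (essentially the full local absolute Galois groups, given the hypotheses) — and characterize the Frobenius conjugacy classes at the primes outside $S_{i}$ via Chebotarev applied to abelian quotients over finite $S_{i}$-ramified extensions. This forces $\sigma$ to carry decomposition subgroups to decomposition subgroups and Frobenii to Frobenii, hence to induce a $\sigma$-equivariant bijection between the primes of $\tilde{\mathcal{O}}_{K_{1},S_{1}}$ and those of $\tilde{\mathcal{O}}_{K_{2},S_{2}}$ compatible with $S_{1}\leftrightarrow S_{2}$. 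Second, check that this bijection preserves residue characteristics, the characteristic of a prime $\mathfrak{p}\in S_{i}$ being read off from the structure of its decomposition subgroup (for instance from its maximal pro-$\ell$ quotients for varying $\ell$); here \cref{Thm:shimizurestriction1} guarantees at least two rational primes all of whose places lie in $S_{i}$, which pins the characteristic down.

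Third, reconstruct the local fields $K_{i,\mathfrak{p}}$ for $\mathfrak{p}\in S_{i}$ from the induced isomorphisms of their decomposition subgroups together with the cyclotomic character. Fourth, pass to the global field: via $S$-ramified class field theory the reciprocity maps identify $G_{L_{i},S_{i}}^{\mathrm{ab}}$, over all finite $S_{i}$-ramified $L_{i}/K_{i}$, with profinite completions of the corresponding $S_{i}$-idele class groups, and \cref{Thm:shimizurestriction2} supplies enough completely split primes lying in $\mathbb{L}(S_{j})$ to reconstruct the multiplicative group $K_{i}^{\times}$ (equivalently $\mathcal{O}_{K_{i},S_{i}}^{\times}$ together with the group of $S_{i}$-divisors); then Uchida's argument recovers the additive law, hence the field $K_{i}$, compatibly with $\sigma$, while \cref{Thm:shimizurestriction3} — the condition $(\star_{\ell})$ on the side that may fail the density hypothesis — supplies the rigidity needed there. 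Reassembling these identifications produces an isomorphism of the covers inducing $\sigma$.

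The hard part is the fourth stage: since ramification at all primes outside $S_{i}$ has been discarded, the classical Uchida reconstruction of $K^{\times}$ and of addition must be re-run using only the places in $S_{i}$ and the Frobenius/Chebotarev information, and conditions \cref{Thm:shimizurestriction2} and \cref{Thm:shimizurestriction3} are precisely the input that makes both this step and the preservation of residue characteristics in the second stage work; a secondary difficulty is setting up the group-theoretic characterization of decomposition subgroups when the cohomology of $G_{K_{i},S_{i}}$ is less well-behaved than that of the full absolute Galois group $G_{K_{i}}$.
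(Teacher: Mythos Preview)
The paper does not prove this theorem at all: it is stated with the citation \cite[Theorem~3.4 and Remark~4.7]{MR4626871} and used as a black box in the proof of \cref{thm:semi-absolute-main}. There is therefore no ``paper's own proof'' to compare your proposal against.

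Your sketch is a plausible outline of a Neukirch--Uchida type argument adapted to restricted ramification, and it correctly identifies where the hypotheses enter. A couple of points where you should be more careful if you want this to stand as an actual proof rather than a narrative: the group-theoretic characterization of decomposition subgroups at primes in $S_i$ inside $G_{K_i,S_i}$ is genuinely delicate and is precisely what the cited works \cite{MR3227527,MR4402495} address --- you gesture at ``Neukirch's local--global cohomological criteria'' but those criteria do not transfer wholesale to $G_{K,S}$, and the actual mechanism in the literature goes through the cyclotomic character and Chenevier--Clozel type input; likewise your claim that $Z(G_{K_1,S_1})$ is trivial under \cref{Thm:shimizurestriction1} needs justification. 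But since the present paper treats the result as an external input, a detailed reconstruction of the argument of \cite{MR4626871} is not required here.
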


\begin{remark}[A sufficient condition for the hypotheses \ref{Thm:shimizurestriction1}, \ref{Thm:shimizurestriction2}, and \ref{Thm:shimizurestriction3} of Theorem~\ref{Thm:shimizurestriction}]\label{rem:shimizu}
    Note that the hypothesis \ref{Thm:shimizurestriction2} of Theorem~\ref{Thm:shimizurestriction} holds if $\delta_{\mathrm{sup}}\bigl(\mathbb{L}(S_{j})\bigr)=1$.
    Indeed, in this case, by \cite[Lem.~4.6]{MR4402495} and Chebotarev's density theorem, we have
    \begin{equation*}
        \delta_{\mathrm{inf}}\bigl(\mathfrak{Primes}_\mathbb{Q}\setminus \mathbb{L}(S_{j}) \bigr)
        = \delta\bigl(\mathfrak{Primes}_\mathbb{Q} \bigr)
        - \delta_{\mathrm{sup}}\bigl(\mathbb{L}(S_{j})\bigr)
        = 1-1=0,
    \end{equation*}
    and hence
    \begin{eqnarray*}
        \delta_{\mathrm{sup}}\bigl(\mathbb{L}(S_{j})\cap \mathrm{cs}(L_j/\mathbb{Q})\bigr)
        &\geq&
        \delta\bigl(\mathrm{cs}(L_j/\mathbb{Q})\bigr)
        - \delta_{\mathrm{inf}}\bigl((\mathfrak{Primes}_\mathbb{Q}\setminus \mathbb{L}(S_{j})) \cap \mathrm{cs}(L_j/\mathbb{Q}) \bigr)\\
        &=& 1/[L_j:\mathbb{Q}] - 0 > 0.
    \end{eqnarray*}
    Moreover, about the hypothesis \ref{Thm:shimizurestriction3}, the set $S_{k}$ satisfies $(\star_\ell)$ whenever $\delta_{\mathrm{sup}}\bigl(S_{k}\bigr)>0$ by \cite[Prop.~1.20]{MR4402495}.
    Therefore, the condition $\delta_{\mathrm{sup}}(\mathbb{L}(S_1))=\delta_{\mathrm{sup}}(\mathbb{L}(S_2))=1$ is sufficient for the hypotheses \ref{Thm:shimizurestriction1}, \ref{Thm:shimizurestriction2}, and \ref{Thm:shimizurestriction3} of Theorem~\ref{Thm:shimizurestriction} to hold.
\end{remark}

The following is the second main theorem of the present paper:

\begin{theorem}\label{thm:semi-absolute-main}
    Let $i$ range over $\{1,2\}$.
    Let $K_i$ be a number field, $S_i\subset \mathfrak{Primes}_{K_i}$, and let $\mathbb{L}_i$ be a nonempty set of rational primes that are invertible in $\mathcal{O}_{K_i,S_i}$.
    Let $\mathcal{X}_i$ be a smooth curve over $\mathcal{O}_{K_i,S_i}$.
    Assume the following conditions:
    \begin{enumerate}[(a)]
        \item\label{as:thm:semi-absolute-main2}
              At least one of $\mathcal{X}_1$ and $\mathcal{X}_2$ is hyperbolic.
        \item\label{as:thm:semi-absolute-main3}
              The data $(K_1,S_1,\mathbb{L}(S_1))$ and $(K_2,S_2,\mathbb{L}(S_2))$ satisfy the hypotheses \ref{Thm:shimizurestriction1}, \ref{Thm:shimizurestriction2}, and \ref{Thm:shimizurestriction3} of Theorem~\ref{Thm:shimizurestriction}.
    \end{enumerate}
    Then the natural map
    \begin{equation*}
        \Isom\bigl(\mathcal{X}_1/\mathcal{O}_{K_1,S_1},\,\mathcal{X}_2/\mathcal{O}_{K_2,S_2}\bigr)
        \longrightarrow
        \Isom^{\mathrm{GP},\Out}\bigl(\Pi^{(\mathbb{L}_{1})}_{\mathcal{X}_1},\,\Pi^{(\mathbb{L}_{2})}_{\mathcal{X}_2}\bigr)
    \end{equation*}
    is bijective, where the right-hand side is the set defined as in Definition~\ref{def:GP}.
\end{theorem}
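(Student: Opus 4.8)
The plan is to reduce the statement to the relative Grothendieck conjecture \cref{thm:relative_anabelian} by using the Neukirch--Uchida type result \cref{Thm:shimizurestriction} to promote an abstract isomorphism of the arithmetic fundamental groups $G_{K_i,S_i}$ to an isomorphism of the base schemes, thereby converting ``semi-absolute'' data into ``relative'' data over a fixed base. If the right-hand set is empty there is nothing to prove, so fix a Galois-preserving isomorphism $\Phi\colon\Pi_{\mathcal{X}_1}^{(\mathbb{L}_1)}\xrightarrow{\sim}\Pi_{\mathcal{X}_2}^{(\mathbb{L}_2)}$. By hypothesis \cref{as:thm:semi-absolute-main2} and \cref{gallem}\cref{gallem1}, both $\mathcal{X}_1$ and $\mathcal{X}_2$ are hyperbolic, and by \cref{gallem}\cref{gallem2} we have $\mathbb{L}_1=\mathbb{L}_2=:\mathbb{L}$ (which is also what makes the natural map well defined). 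Passing to the Galois quotients, $\Phi$ induces an isomorphism $\Phi_G\colon G_{K_1,S_1}\xrightarrow{\sim}G_{K_2,S_2}$ that depends only on the class of $\Phi$ in $\Isom^{\mathrm{GP},\Out}$, since conjugating $\Phi$ by an element of $\Delta_{\mathcal{X}_2/\mathcal{O}_{K_2,S_2}}^{\mathbb{L}}$ (whose image in $G_{K_2,S_2}$ is trivial) leaves $\Phi_G$ unchanged.

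Next I would carry out the key step. By \cref{Thm:shimizurestriction}, whose hypotheses hold by \cref{as:thm:semi-absolute-main3}, the isomorphism $\Phi_G$ is realized by a unique isomorphism of schemes $\sigma\colon\Spec(\mathcal{O}_{K_1,S_1})\xrightarrow{\sim}\Spec(\mathcal{O}_{K_2,S_2})$ (together with a compatible isomorphism of the maximal Galois coverings). Set $\mathcal{Y}\coloneqq\mathcal{X}_2\times_{\Spec(\mathcal{O}_{K_2,S_2}),\,\sigma}\Spec(\mathcal{O}_{K_1,S_1})$; this is a hyperbolic curve over $\mathcal{O}_{K_1,S_1}$, and because $\sigma$ is an isomorphism the projection $\mathcal{Y}\to\mathcal{X}_2$ identifies $\Pi_{\mathcal{Y}}^{(\mathbb{L})}$ with $\Pi_{\mathcal{X}_2}^{(\mathbb{L})}$ compatibly with the isomorphism $G_{K_1,S_1}\xrightarrow{\sim}G_{K_2,S_2}$ induced by $\sigma$, i.e.\ with $\Phi_G$. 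Composing $\Phi$ with this identification turns it into a $G_{K_1,S_1}$-isomorphism $\Pi_{\mathcal{X}_1}^{(\mathbb{L})}\xrightarrow{\sim}\Pi_{\mathcal{Y}}^{(\mathbb{L})}$, whose class lies in $\Isom^{\Out}_{\Pi_{\Spec(\mathcal{O}_{K_1,S_1})}}\bigl(\Pi_{\mathcal{X}_1}^{(\mathbb{L})},\Pi_{\mathcal{Y}}^{(\mathbb{L})}\bigr)$.

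Now $\Spec(\mathcal{O}_{K_1,S_1})$ satisfies hypothesis \cref{thm:relative_anabelian_condition0} of \cref{thm:relative_anabelian}: it is a Dedekind scheme, its function field $K_1$ is a number field and hence a generalized sub-$p$-adic field for every rational prime, in particular for the prime $p\in\mathbb{L}$, which is invertible on it; moreover $\mathcal{X}_1$ and $\mathcal{Y}$ are hyperbolic, so \cref{thm:relative_anabelian_condition1} holds. Therefore \cref{thm:relative_anabelian} produces a unique $\mathcal{O}_{K_1,S_1}$-isomorphism $\mathcal{X}_1\xrightarrow{\sim}\mathcal{Y}$ inducing the class above; composing it with the isomorphism $\mathcal{Y}\xrightarrow{\sim}\mathcal{X}_2$ lying over $\sigma$ yields an element of $\Isom\bigl(\mathcal{X}_1/\mathcal{O}_{K_1,S_1},\mathcal{X}_2/\mathcal{O}_{K_2,S_2}\bigr)$ whose associated outer Galois-preserving isomorphism is the class of $\Phi$; this gives surjectivity. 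For injectivity, if two elements of the source have the same image, then they induce the same $\Phi_G$, hence the same $\sigma$ by the injectivity part of \cref{Thm:shimizurestriction}; transporting $\mathcal{X}_2$ to $\mathcal{Y}$ over this common $\sigma$ as above, they then give the same class in $\Isom^{\Out}_{\Pi_{\Spec(\mathcal{O}_{K_1,S_1})}}\bigl(\Pi_{\mathcal{X}_1}^{(\mathbb{L})},\Pi_{\mathcal{Y}}^{(\mathbb{L})}\bigr)$, and so coincide by the injectivity part of \cref{thm:relative_anabelian}.

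Since the two deep inputs \cref{thm:relative_anabelian} and \cref{Thm:shimizurestriction} are used as black boxes, the main obstacle is not an estimate but the compatibility bookkeeping: one must check that the natural map of the theorem is well defined (this is where $\mathbb{L}_1=\mathbb{L}_2$ is forced and where one uses that the kernels cutting out $\Pi^{(\mathbb{L})}$ are characteristic), that \cref{Thm:shimizurestriction}, which is stated in terms of rigidified maximal Galois coverings, does descend to a genuine isomorphism of the base schemes realizing $\Phi_G$, and that the base-change identification $\Pi_{\mathcal{Y}}^{(\mathbb{L})}\cong\Pi_{\mathcal{X}_2}^{(\mathbb{L})}$ is compatible with the actions of the respective $\Pi_{\Spec(\mathcal{O})}$ in precisely the form required to feed into \cref{thm:relative_anabelian}. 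Keeping the various $\Out$-quotients and the directions of all the isomorphisms consistent throughout is where most of the actual effort lies.
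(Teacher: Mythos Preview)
Your proposal is correct and follows essentially the same approach as the paper: reduce to the relative case by first using \cref{gallem} to get hyperbolicity of both curves and $\mathbb{L}_1=\mathbb{L}_2$, then invoke \cref{Thm:shimizurestriction} to realize $\Phi_G$ by an isomorphism of base schemes, base change one curve along it, and apply \cref{thm:relative_anabelian}. The only cosmetic difference is that the paper base changes $\mathcal{X}_1$ to $\mathcal{O}_{K_2,S_2}$ rather than $\mathcal{X}_2$ to $\mathcal{O}_{K_1,S_1}$, and it packages injectivity and surjectivity together via the word ``unique'' rather than treating them separately.
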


\begin{proof}
    Take $\Phi\in \Isom^{\mathrm{GP}}\bigl(\Pi^{(\mathbb{L}_{1})}_{\mathcal{X}_1},\Pi^{(\mathbb{L}_{2})}_{\mathcal{X}_2}\bigr)$, and set $\Phi_G:\,G_{K_1,S_1}\xrightarrow{\sim}G_{K_2,S_2}$ for the isomorphism induced by $\Phi$.
    For simplicity, set $\mathcal{O}_{i}\coloneqq \mathcal{O}_{K_{i},S_i}$.
    The hypothesis \ref{as:thm:semi-absolute-main2} and Lemma~\ref{gallem}\ref{gallem1} imply that both $\mathcal{X}_{1}$ and $\mathcal{X}_{2}$ are hyperbolic.
    Moreover, Lemma~\ref{gallem}\ref{gallem2} implies that $\mathbb{L}_{1}=\mathbb{L}_{2}$.
    For simplicity, we write $\mathbb{L}$ instead of $\mathbb{L}_{1}(=\mathbb{L}_{2})$.

    By hypothesis \ref{as:thm:semi-absolute-main3} and Theorem~\ref{Thm:shimizurestriction}, there exists a unique isomorphism of arithmetic schemes $\Spec(\mathcal O_1)\xrightarrow{\sim}\Spec(\mathcal O_2)$, or equivalently a unique ring isomorphism $\phi_{o}:\mathcal{O}_{2}\xrightarrow{\sim}\mathcal{O}_{1}$, such that $\Phi_{G}$ comes from $\phi_{o}$ up to unique inner automorphism of $G_{K_{2},S_{2}}$.
    Let $\mathcal{X}_{1,\mathcal{O}_{2}}\to\Spec(\mathcal{O}_{2})$ be the base change of $\mathcal{X}_{1}$ via $\Spec(\phi_{o}^{-1}):\Spec(\mathcal{O}_{2})\xrightarrow{\sim}\Spec(\mathcal{O}_{1})$, i.e., we take it via the fiber product diagram
    \begin{equation*}
        \xymatrix@C=40pt{
        \mathcal{X}_{1,\mathcal{O}_{2}}\ar@{.>}[d]\ar@{.>}[r]^{\phi^{\ast}}& \mathcal{X}_{1}\ar[d]\\
        \Spec(\mathcal{O}_{2})\ar[r]^{\Spec(\phi_{o}^{-1})}&\Spec(\mathcal{O}_{1}).
        }
    \end{equation*}
    Then $\Phi$ induces a $G_{K_{2},S_{2}}$-isomorphism $\Phi^{\ast}:\Pi^{(\mathbb{L})}_{\mathcal{X}_{1,\mathcal{O}_{2}}}\xrightarrow{\sim}\Pi^{(\mathbb{L})}_{\mathcal{X}_{2}}$ as the composite

    \begin{equation*}
        \xymatrix@C=40pt{
        \Pi_{\mathcal{X}_{1,\mathcal{O}_{2}}}^{(\mathbb{L})}\ar[r]^-{\Pi(\phi^{\ast})} &\Pi_{\mathcal{X}_{1}}^{(\mathbb{L})} \ar[r]^-{\Phi}& \Pi_{\mathcal{X}_{2}}^{(\mathbb{L})}.
        }
    \end{equation*}
    Hence, by Theorem~\ref{thm:relative_anabelian} and the hypothesis \ref{as:thm:semi-absolute-main2}, there exists a unique isomorphism $\phi_{1}\in\Isom_{\mathcal{O}_{2}}\bigl(\mathcal{X}_{1,\mathcal{O}_{2}}, \mathcal{X}_2\bigr)$ such that $\Phi^{\ast}$ comes from $\phi_{1}$ up to unique inner automorphism of $\Delta_{\mathcal{X}_2/\mathcal{O}_{2}}^{\mathbb{L}}$.
    Therefore, $\Phi$ comes from $\phi_{1}\circ(\phi^{\ast})^{-1}$ up to unique inner automorphism of $\Delta_{\mathcal{X}_2/\mathcal{O}_{2}}^{\mathbb{L}}$.
    This proves the surjectivity of the map in the statement.
    For injectivity, suppose that two isomorphisms of schemes in the left-hand side induce the same element of $\Isom^{\mathrm{GP},\Out}$.
    Then the induced isomorphisms of $G_{K_1,S_1}$ and $G_{K_2,S_2}$ differ by an inner automorphism, so the uniqueness assertion in Theorem~\ref{Thm:shimizurestriction} implies that the two base isomorphisms coincide.
    After this common base isomorphism is fixed, the uniqueness assertion in Theorem~\ref{thm:relative_anabelian} implies that the two isomorphisms of curves coincide.
    Hence the map is injective, and therefore bijective.
\end{proof}

\subsection{Reconstruction of the geometric generic fiber}\label{subsection:proof_of_semi-abs-geometric}

\subsubsection{}
In this subsection, we reconstruct the geometric generic fibers of hyperbolic curves over $\mathcal{O}_{K,S}$ from their \'etale fundamental groups.
This is related to the semi-absolute version of the Grothendieck conjecture over $\mathcal{O}_{K,S}$.
We first recall the following two results:

\begin{proposition}\label{local_correspondence}
    Let $i$ range over $\{1,2\}$.
    Let $K_i$ be a number field, $S_i\subset \mathfrak{Primes}_{K_i}$, and let $\mathbb{L}_i$ be a nonempty set of rational primes that are invertible in $\mathcal{O}_{K_i,S_i}$.
    Let $\mathcal{X}_i$ be a smooth curve over $\mathcal{O}_{K_i,S_i}$.
    Let $\Phi:\Pi_{\mathcal{X}_1}^{(\mathbb{L}_1)}\xrightarrow{\sim}\Pi_{\mathcal{X}_2}^{(\mathbb{L}_2)}$ be a Galois-preserving isomorphism.
    Assume the following conditions:
    \begin{enumerate}[(a)]
        \item\label{semi-abs-dec_cond1}
              At least one of $\mathcal{X}_1$ and $\mathcal{X}_2$ is hyperbolic.
        \item\label{semi-abs-dec_cond2}
              $\#\mathbb{L}(S_1)\geq 2$ and $\#\mathbb{L}(S_2)\geq 2$.
    \end{enumerate}
    Then the following hold:
    \begin{enumerate}[(1)]
        \item\label{semi-abs-dec1}
              Let $\Phi_G:G_{K_1,S_1}\xrightarrow{\sim}G_{K_2,S_2}$ be the isomorphism induced by $\Phi$.
              Then $\Phi_G$ induces a local correspondence between $S_1$ and $S_2$ in the sense of \cite[Definition~2.5]{MR4402495},
              i.e., for every prime $\overline{\mathfrak{p}_1}$ of $K_{1,S_1}$ above $S_{1}$, there is a unique prime $\Phi_{G,\ast}(\overline{\mathfrak{p}_1})$ of $K_{2,S_2}$ above $S_{2}$ with $\Phi_G(D_{\overline{\mathfrak{p}_1}}) = D_{\Phi_{G,\ast}(\overline{\mathfrak{p}_1})}$ such that $\Phi_{G,\ast}$ is a bijection between the sets of all primes of $K_{1,S_1}$ above $S_1$ and of $K_{2,S_2}$ above $S_2$.
        \item\label{semi-abs-dec2}
              $\mathbb{L}(S_1)=\mathbb{L}(S_2)$.
        \item\label{semi-abs-dec3}
              For every $p\in\mathbb{L}(S_1)=\mathbb{L}(S_2)$, the isomorphism $\Phi_{G,\ast}$ in \ref{semi-abs-dec1} induces a bijection between the sets of all primes above $p$ of $K_{1,S_1}$ and of $K_{2,S_2}$.
    \end{enumerate}
\end{proposition}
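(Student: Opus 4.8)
The plan is to transfer the problem to the group theory of the Galois groups $G_{K_i,S_i}$ and to feed in the group-theoretic recognition of decomposition subgroups at $S$, using the cyclotomic character reconstructed in \cref{cor:recon_sev} as the bridge. First I would reduce to the hyperbolic case: by hypothesis \cref{semi-abs-dec_cond1} and \cref{gallem}\cref{gallem1}, both $\mathcal{X}_1$ and $\mathcal{X}_2$ are hyperbolic, so $\Delta_{\mathcal{X}_i/\mathcal{O}_{K_i,S_i}}^{\mathbb{L}_i}$ is non-abelian, and \cref{gallem}\cref{gallem2} then gives $\mathbb{L}_1=\mathbb{L}_2$, which I denote by $\mathbb{L}$. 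Writing $\Phi_G\colon G_{K_1,S_1}\xrightarrow{\sim}G_{K_2,S_2}$ for the induced isomorphism, \cref{cor:recon_sev}\cref{cor:recon_sev4} shows that $\Phi_G$ is compatible with the $\mathbb{L}$-adic cyclotomic characters, hence in particular with the $p$-adic cyclotomic characters of $G_{K_1,S_1}$ and $G_{K_2,S_2}$ for each $p\in\mathbb{L}$.

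For \cref{semi-abs-dec1} the key input is the group-theoretic recognition of the decomposition subgroups of $G_{K,S}$ at the primes of $S$: given the hypothesis $\#\mathbb{L}(S)\geq 2$ (hypothesis \cref{semi-abs-dec_cond2}) together with the $p$-adic cyclotomic character for some $p\in\mathbb{L}(S)$, this is the content of \cite[Theorem~1.1]{MR3227527}. Applying it to $G_{K_1,S_1}$ and $G_{K_2,S_2}$ and combining with the cyclotomic compatibility above, $\Phi_G$ will carry the decomposition subgroup $D_{\overline{\mathfrak{p}_1}}$ of any prime $\overline{\mathfrak{p}_1}$ of $K_{1,S_1}$ above $S_1$ onto the decomposition subgroup of some prime of $K_{2,S_2}$ above $S_2$, and conversely. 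Since the primes of $K_{i,S_i}$ above a given prime of $S_i$ form a single $G_{K_i,S_i}$-orbit and these decomposition subgroups are self-normalizing in $G_{K_i,S_i}$, the assignment $\overline{\mathfrak{p}_i}\mapsto D_{\overline{\mathfrak{p}_i}}$ is injective, so these matchings assemble into a well-defined $\Phi_G$-equivariant bijection $\Phi_{G,\ast}$ from the primes of $K_{1,S_1}$ above $S_1$ to those of $K_{2,S_2}$ above $S_2$ satisfying $\Phi_G(D_{\overline{\mathfrak{p}_1}})=D_{\Phi_{G,\ast}(\overline{\mathfrak{p}_1})}$, i.e.\ a local correspondence in the sense of \cite[Definition~2.5]{MR4402495}.

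For \cref{semi-abs-dec2} and \cref{semi-abs-dec3} I would use the fullness of decomposition subgroups at $S$ (\cite[Th\'eor\`eme 5.1]{MR2476781}), which identifies $D_{\overline{\mathfrak{p}_i}}$ with the absolute Galois group $G_{K_{i,\mathfrak{p}_i}}$ of the completion, so that \cref{semi-abs-dec1} gives $G_{K_{1,\mathfrak{p}_1}}\cong G_{K_{2,\mathfrak{p}_2}}$ whenever $\mathfrak{p}_1,\mathfrak{p}_2$ correspond under $\Phi_{G,\ast}$; by the classical local Galois theory over $p$-adic fields (see, e.g., \cite{MR1720187} and the references therein) this forces $\mathfrak{p}_1$ and $\mathfrak{p}_2$ to have the same residue characteristic and the same local degree. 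Passing to $G_{K_i,S_i}$-orbits, $\Phi_{G,\ast}$ descends to a bijection $S_1\xrightarrow{\sim}S_2$ of the underlying prime sets of $K_1$ and $K_2$ preserving residue characteristic and local degree, so the quantities $d_i(\ell)\coloneqq\sum_{\mathfrak{p}\in S_i,\ \mathfrak{p}\mid\ell}[K_{i,\mathfrak{p}}:\mathbb{Q}_\ell]$ satisfy $d_1(\ell)=d_2(\ell)$ for every rational prime $\ell$. Since $d_i(\ell)=[K_i:\mathbb{Q}]$ exactly when $\ell\in\mathbb{L}(S_i)$ and $d_i(\ell)\leq[K_i:\mathbb{Q}]$ always, and since $\mathbb{L}\subseteq\mathbb{L}(S_i)$ is nonempty, $[K_i:\mathbb{Q}]$ is the maximum value of $d_i$; hence $[K_1:\mathbb{Q}]=[K_2:\mathbb{Q}]$ and $\ell\in\mathbb{L}(S_1)\iff d_1(\ell)=[K_1:\mathbb{Q}]\iff d_2(\ell)=[K_2:\mathbb{Q}]\iff\ell\in\mathbb{L}(S_2)$, which is \cref{semi-abs-dec2}. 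Then, for $p\in\mathbb{L}(S_1)=\mathbb{L}(S_2)$, every prime of $K_i$ above $p$ lies in $S_i$, so the primes of $K_{i,S_i}$ above $p$ are precisely the primes above $S_i$ of residue characteristic $p$, and $\Phi_{G,\ast}$ (which preserves residue characteristic) restricts to the asserted bijection, giving \cref{semi-abs-dec3}.

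The main obstacle is the group-theoretic recognition of the decomposition subgroups of $G_{K,S}$ at $S$ invoked for \cref{semi-abs-dec1}: this is exactly where the hypothesis $\#\mathbb{L}(S_i)\geq 2$ and the reconstructed cyclotomic character become indispensable, and it is precisely the role of \cite[Theorem~1.1]{MR3227527}. A secondary technical point to be careful about is that $\overline{\mathfrak{p}}\mapsto D_{\overline{\mathfrak{p}}}$ is genuinely injective on the primes above $S$, so that the matching descends to an honest bijection, and that orbit cardinalities are handled correctly when $S_i$ is infinite.
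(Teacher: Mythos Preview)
Your argument for \cref{semi-abs-dec1} is essentially the paper's: reduce to both curves hyperbolic via \cref{gallem}\cref{gallem1}, get $\mathbb{L}_1=\mathbb{L}_2$ via \cref{gallem}\cref{gallem2}, reconstruct the cyclotomic character via \cref{cor:recon_sev}\cref{cor:recon_sev4}, and then apply \cite[Theorem~1.1]{MR3227527} under hypothesis \cref{semi-abs-dec_cond2}.

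For \cref{semi-abs-dec2} and \cref{semi-abs-dec3} you diverge from the paper. The paper does not invoke fullness of decomposition groups or any local-degree bookkeeping; it simply cites \cite[Proposition~2.8]{MR4402495}, which under hypothesis \cref{semi-abs-dec_cond2} delivers both $\mathbb{L}(S_1)=\mathbb{L}(S_2)$ and the fact that the local correspondence of \cref{semi-abs-dec1} is \emph{good} (hence residue-characteristic-preserving on primes above $\mathbb{L}(S_i)$), yielding \cref{semi-abs-dec3} at once.

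Your hands-on route has a genuine gap. You invoke ``fullness of decomposition subgroups at $S$'' via \cite[Th\'eor\`eme~5.1]{MR2476781}, but the paper's \cref{lem:deco_is_full} (and the reduction to $\mathbb{Q}$ used there) only establishes $D_{\overline{\mathfrak{p}_i}}\cong G_{K_{i,\mathfrak{p}_i}}$ for primes $\mathfrak{p}_i$ lying \emph{above $\mathbb{L}(S_i)$}, not for arbitrary $\mathfrak{p}_i\in S_i$. Your computation with $d_i(\ell)=\sum_{\mathfrak{p}\in S_i,\ \mathfrak{p}\mid\ell}[K_{i,\mathfrak{p}}:\mathbb{Q}_\ell]$ requires that $\Phi_{G,\ast}$ preserve residue characteristic and local degree across \emph{all} of $S_i$, which in turn needs fullness at all of $S_i$. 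Worse, even restricting to $\mathfrak{p}_1$ above $\mathbb{L}(S_1)$, you do not yet know that $\mathfrak{p}_2=\Phi_{G,\ast}(\mathfrak{p}_1)\!\!\mid_{K_2}$ lies above $\mathbb{L}(S_2)$, so you cannot apply fullness on the target side to conclude $D_{\overline{\mathfrak{p}_2}}\cong G_{K_{2,\mathfrak{p}_2}}$; this is circular, since that is essentially what \cref{semi-abs-dec2} asserts. The paper's appeal to \cite[Proposition~2.8]{MR4402495} is precisely what breaks this circularity.
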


\begin{proof}
    The hypothesis \ref{semi-abs-dec_cond1} and Lemma~\ref{gallem}\ref{gallem1} imply that both $\mathcal{X}_{1}$ and $\mathcal{X}_{2}$ are hyperbolic.
    Moreover, Lemma~\ref{gallem}\ref{gallem2} implies that $\mathbb{L}_{1}=\mathbb{L}_{2}$, and Proposition~\ref{cor:recon_sev}\ref{cor:recon_sev4} implies that $\chi^{p\text{-}\mathrm{cycl}}_{2}\circ \Phi_{G}=\chi^{p\text{-}\mathrm{cycl}}_{1}$ for $p\in \mathbb{L}_{1}(=\mathbb{L}_{2})$.
    By using this data and the hypothesis \ref{semi-abs-dec_cond2}, we obtain the desired bijection $\Phi_{G,\ast}$ induced by $\Phi_G$ (see \cite[Theorem~1.1]{MR3227527}).
    This completes the proof of \ref{semi-abs-dec1}.
    The hypothesis \ref{semi-abs-dec_cond2} and \cite[Proposition~2.8]{MR4402495} imply that \ref{semi-abs-dec2} holds and that the good local correspondence between the sets of primes of $K_{1}$ above $\mathbb{L}(S_{1})$ and of $K_{2}$ above $\mathbb{L}(S_{2})$ holds for $\Phi_G$ (see \cite[Definition~2.5]{MR4402495} for the definition).
    This completes the proof of \ref{semi-abs-dec3}.
\end{proof}

\begin{lemma}\label{lem:deco_is_full}
    Let $K$ be a number field and let $S\subset \mathfrak{Primes}_{K}$.
    Assume that $\#\mathbb{L}(S)\geq 2$.
    Then, for each $\mathfrak{p}\in\mathfrak{Primes}_{K}$ above $\mathbb{L}(S)$, the canonical surjection from $G_{K_\mathfrak{p}}$ to the decomposition subgroup $D_\mathfrak{p}$ of $G_{K,S}$ at $\mathfrak{p}$ is an isomorphism.
\end{lemma}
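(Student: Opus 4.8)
The surjectivity of the canonical map $G_{K_{\mathfrak{p}}}\twoheadrightarrow D_{\mathfrak{p}}$ is standard (a decomposition subgroup in a Galois group is a quotient of the absolute Galois group of the corresponding completion), so the task is to prove injectivity. The plan is to reformulate this as a \emph{local realization} statement: injectivity is equivalent to the assertion that the completion of $K_{S}$ at a prime $\widehat{\mathfrak{p}}$ above $\mathfrak{p}$ is an algebraic closure of $K_{\mathfrak{p}}$, i.e., that every finite extension of $K_{\mathfrak{p}}$ occurs, at $\mathfrak{p}$, as the completion of a finite subextension of $K_{S}/K$. Since $\mathfrak{p}$ lies over a rational prime $p\in\mathbb{L}(S)$, we have $\mathfrak{p}\in S_{p}\subset S$, so arbitrary ramification at $\mathfrak{p}$ is allowed inside $K_{S}$; the only real constraint is that ramification outside $S$ is forbidden.

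Next I would reduce to cyclic extensions of prime degree. The absolute Galois group of a non-archimedean local field is prosolvable, so $\overline{K_{\mathfrak{p}}}$ is the union of its finite solvable Galois subextensions, and each such extension is built from $K_{\mathfrak{p}}$ by a finite tower of cyclic extensions of prime degree. To run the resulting induction one replaces $K$ by a finite Galois subextension $M$ of $K_{S}/K$; writing $S_{M}$ for the primes of $M$ lying over $S$, one checks that $M_{S_{M}}=K_{S}$, that $\mathbb{L}(S_{M})=\mathbb{L}(S)$ (so the hypothesis $\#\mathbb{L}(S_{M})\geq 2$ persists), and that the chosen prime of $M$ above $\mathfrak{p}$ again lies over a rational prime in $\mathbb{L}(S_{M})$. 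Thus the problem reduces to the following: for every prime $\ell$, every $\mathbb{Z}/\ell\mathbb{Z}$-extension of $K_{\mathfrak{p}}$ is realized at $\mathfrak{p}$ by some finite — and, crucially, a priori non-abelian — subextension of $K_{S}/K$. (This cannot be shortcut by working only with abelian extensions: the completion at $\mathfrak{p}$ of the maximal abelian subextension of $K_{S}/K$ is in general strictly smaller than $K_{\mathfrak{p}}^{\mathrm{ab}}$ — already for $K=\mathbb{Q}$ and $S=\{p,q\}$ with $p\neq q$ one checks that it is strictly contained in the maximal unramified extension of $\mathbb{Q}_{p}$ — so non-abelian extensions of $K$ inside $K_{S}$ are genuinely needed.)

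The remaining base case — realizing a prescribed cyclic prime-degree extension of $K_{\mathfrak{p}}$ by a global extension of $K$ unramified outside $S$ — is the fullness input \cite[Th\'eor\`eme 5.1]{MR2476781}; its hypotheses are satisfied here because $\#\mathbb{L}(S)\geq 2$ forces $S$ to contain all primes over two distinct rational primes, and these spare ramified places are what allow the relevant solvable embedding problems over $K$ to be solved with ramification confined to $S$ (this is the circle of ideas around Neukirch's results on embedding problems over global fields). Everything else in the argument — prosolvability of $G_{K_{\mathfrak{p}}}$, the reduction that replaces $K$ by $M$, and the ascent from the cyclic pieces to a general finite extension of $K_{\mathfrak{p}}$ — is formal. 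Accordingly, I expect the main obstacle to be precisely this global-to-local realization step: checking that the exact hypotheses of \cite[Th\'eor\`eme 5.1]{MR2476781} follow from $\#\mathbb{L}(S)\geq 2$, and carrying out the inductive bookkeeping that upgrades the prime-degree case to the full statement.
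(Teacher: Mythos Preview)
Your approach is correct in spirit but takes a much longer route than the paper. The paper's proof is two lines: it reduces to $K=\mathbb{Q}$ and then cites \cite[Th\'eor\`eme~5.1]{MR2476781} as a black box. The reduction is the observation that since $\mathbb{L}(S)(K)\subset S$ and since $K\cdot\mathbb{Q}_{\mathbb{L}(S)}/K$ is unramified outside $\mathbb{L}(S)(K)$, there are canonical maps $G_{K,S}\twoheadrightarrow G_{K,\mathbb{L}(S)(K)}\to G_{\mathbb{Q},\mathbb{L}(S)}$; the composite $G_{K_{\mathfrak{p}}}\to G_{K,S}\to G_{\mathbb{Q},\mathbb{L}(S)}$ then agrees with $G_{K_{\mathfrak{p}}}\hookrightarrow G_{\mathbb{Q}_p}\to G_{\mathbb{Q},\mathbb{L}(S)}$, which is injective by the cited theorem applied to $\mathbb{Q}$ and the set $\mathbb{L}(S)$ of at least two rational primes. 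Hence $G_{K_{\mathfrak{p}}}\to G_{K,S}$ is injective.

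By contrast, you are essentially proposing to \emph{reprove} Chenevier--Clozel's result over a general number field: the prosolvable reduction to cyclic prime-degree steps and the appeal to Neukirch-type solvability of embedding problems is precisely their method over $\mathbb{Q}$. You also misread what \cite[Th\'eor\`eme~5.1]{MR2476781} provides: it is not merely the cyclic prime-degree realization input, but already the full statement $G_{\mathbb{Q}_p}\xrightarrow{\sim} D_p\subset G_{\mathbb{Q},S}$ for $p\in S$ and $\#S\geq 2$. Once that is granted, the one-line reduction above makes all of your inductive bookkeeping unnecessary. Your strategy would work, but it duplicates the hard content of the reference rather than simply invoking it.
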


\begin{proof}
    Let $\mathbb{L}(S)(K)$ be the set of all primes of $K$ above $\mathbb{L}(S)$.
    By considering the composite of canonical morphisms
    \begin{equation*}
        G_{K,S} \twoheadrightarrow G_{K,\mathbb{L}(S)(K)} \hookrightarrow G_{\mathbb{Q},\mathbb{L}(S)},
    \end{equation*}
    we are reduced to the case where $K=\mathbb{Q}$.
    Then the assertion follows from \cite[Th\'eor\`eme 5.1]{MR2476781}.
\end{proof}

\subsubsection{}
From the above results and the Grothendieck conjecture for hyperbolic curves over $p$-adic local fields (see~\cite[Theorem~3.12]{Mochizuki-Tsujimura:RIMS1974}), we obtain the main theorem of this subsection as follows:

\begin{theorem}\label{thm:semi-abs-geometric}
    Let $i$ range over $\{1,2\}$.
    Let $K_i$ be a number field, $S_i\subset \mathfrak{Primes}_{K_i}$, and let $\mathbb{L}_i$ be a nonempty set of rational primes that are invertible in $\mathcal{O}_{K_i,S_i}$.
    Let $\eta_{i}$ be the generic point of $\Spec(\mathcal{O}_{K_{i},S_{i}})$.
    Let $\mathcal{X}_i$ be a smooth curve over $\mathcal{O}_{K_i,S_i}$, and set $X_{i}\coloneqq \mathcal{X}_{i,\eta_{i}}$.
    Let $\Phi:\Pi_{\mathcal{X}_1}^{(\mathbb{L}_1)}\xrightarrow{\sim}\Pi_{\mathcal{X}_2}^{(\mathbb{L}_2)}$ be a Galois-preserving isomorphism and let $\Phi_G:G_{K_1,S_1}\xrightarrow{\sim}G_{K_2,S_2}$ be the isomorphism induced by $\Phi$.
    Assume the following conditions:
    \begin{enumerate}[(a)]
        \item \label{thm:semi-abs-geometric3-a}
              At least one of $\mathcal{X}_1$ and $\mathcal{X}_2$ is hyperbolic.
        \item \label{thm:semi-abs-geometric3-b}
              At least one of $\#\mathbb{L}_1$ and $\#\mathbb{L}_2$ is greater than or equal to $2$.
    \end{enumerate}
    Then the following hold:
    \begin{enumerate}[(1)]
        \item\label{thm:semi-abs-geometric3-1}
              $\Phi_G$ induces a local correspondence between $S_1$ and $S_2$ in the sense of \cite[Definition~2.5]{MR4402495}.
        \item\label{thm:semi-abs-geometric3-2}
              Let $\overline{\mathfrak{p}_1}$ be a prime of $K_{1,S_1}$ above $\mathbb{L}_1$.
              Let $\overline{\mathfrak{p}_{2}}$ be a unique prime of $K_{2,S_2}$ above $S_2$ satisfying $\Phi_G(D_{\overline{\mathfrak{p}_1}}) = D_{\overline{\mathfrak{p}_{2}}}$, whose existence is guaranteed in \ref{thm:semi-abs-geometric3-1}.
              Set $\mathfrak{p}_{i}\coloneqq \overline{\mathfrak{p}}_{i}\mid_{K_{i}}$.
              Let $K^{h}_{i,\mathfrak{p}_i}$ be the fraction field of the henselization of the local ring $(\mathcal O_{K_i})_{\mathfrak p_i}$.
              Then there exists a pair of isomorphisms that fits into the commutative diagram
              \begin{equation*}
                  \vcenter{
                  \xymatrix{
                  X_{1,K_{1,\mathfrak{p}_1}^{h}} \ar[d]\ar[r]^-{\sim}&X_{2,K_{2,\mathfrak{p}_2}^{h}}\ar[d]\\
                  \Spec(K_{1,\mathfrak{p}_1}^{h})\ar[r]^-{\sim} &\Spec(K_{2,\mathfrak{p}_2}^{h}).
                  }
                  }
              \end{equation*}
              In particular, $X_{1,\overline{K_{1}}}$ and $X_{2,\overline{K_{2}}}$ are isomorphic as schemes.
    \end{enumerate}
\end{theorem}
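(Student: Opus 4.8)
The plan is to combine three reconstruction inputs already secured above — the $p$-adic cyclotomic character via weight theory (\cref{cor:recon_sev}\cref{cor:recon_sev4}), the local correspondence at $S$ (\cref{local_correspondence}), and the fullness of the decomposition subgroups at $S$ (\cref{lem:deco_is_full}) — and then to invoke the semi-absolute Grothendieck conjecture over $p$-adic local fields \cite[Theorem~3.12]{Mochizuki-Tsujimura:RIMS1974}. First I would dispose of \cref{thm:semi-abs-geometric3-1}: by hypothesis \cref{thm:semi-abs-geometric3-a} and \cref{gallem}\cref{gallem1} both $\mathcal{X}_1$ and $\mathcal{X}_2$ are hyperbolic, so \cref{gallem}\cref{gallem2} forces $\mathbb{L}_1=\mathbb{L}_2$; together with \cref{thm:semi-abs-geometric3-b} this gives $\#\mathbb{L}(S_1)\ge\#\mathbb{L}_1\ge 2$ and likewise for $S_2$, so the hypotheses of \cref{local_correspondence} hold and \cref{local_correspondence}\cref{semi-abs-dec1} yields the local correspondence. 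Moreover \cref{local_correspondence}\cref{semi-abs-dec3} shows that $\overline{\mathfrak{p}_2}=\Phi_{G,\ast}(\overline{\mathfrak{p}_1})$ lies above the same rational prime $p\in\mathbb{L}_1=\mathbb{L}_2$ as $\overline{\mathfrak{p}_1}$; in particular each $\mathfrak{p}_i$ has residue characteristic $p\in\mathbb{L}_i$.

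For \cref{thm:semi-abs-geometric3-2} I would pass to the local picture at $\mathfrak{p}_i$. Write $L_i\coloneqq K^h_{i,\mathfrak{p}_i}$ and $\widehat{L_i}\coloneqq K_{i,\mathfrak{p}_i}$, and fix the embeddings determined by $\overline{\mathfrak{p}_i}$, so that the canonical maps $G_{\widehat{L_i}}=G_{L_i}\to D_{\overline{\mathfrak{p}_i}}\subseteq G_{K_i,S_i}$ are defined; since $\mathfrak{p}_i$ lies above $\mathbb{L}(S_i)$ and $\#\mathbb{L}(S_i)\ge 2$, \cref{lem:deco_is_full} makes these isomorphisms. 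Let $\Pi_i'$ denote the inverse image of $D_{\overline{\mathfrak{p}_i}}$ in $\Pi_{\mathcal{X}_i}^{(\mathbb{L}_i)}$. The homotopy exact sequence of $X_{i,L_i}$ over $\Spec L_i$ reads $1\to\Delta_{X_{i,L_i}/L_i}^{\mathbb{L}_i}\to\Pi_{X_{i,L_i}}^{(\mathbb{L}_i)}\to G_{L_i}\to 1$, and its kernel coincides with $\Delta_{\mathcal{X}_i/\mathcal{O}_{K_i,S_i}}^{\mathbb{L}_i}$ since both compute the geometrically pro-$\mathbb{L}_i$ fundamental group of $X_{i,\overline{K_i}}$ (\cref{lem:stix_exact}); comparing this sequence with the pullback along $G_{L_i}\xrightarrow{\sim}D_{\overline{\mathfrak{p}_i}}\hookrightarrow G_{K_i,S_i}$ of the homotopy exact sequence of $\mathcal{X}_i$, \cref{lem:fiber_product} identifies $\Pi_{X_{i,L_i}}^{(\mathbb{L}_i)}$ with $\Pi_i'$. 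As $\Phi$ is Galois-preserving and $\Phi_G(D_{\overline{\mathfrak{p}_1}})=D_{\overline{\mathfrak{p}_2}}$, it restricts to a Galois-preserving isomorphism $\Pi_1'\xrightarrow{\sim}\Pi_2'$, hence — via the identifications, and after descending to the geometrically pro-$p$ quotients if necessary — to a Galois-preserving outer isomorphism between $\Pi_{X_{1,L_1}}^{(\mathbb{L}_1)}$ and $\Pi_{X_{2,L_2}}^{(\mathbb{L}_2)}$.

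Now $\Pi_{X_{i,L_i}}^{(\mathbb{L}_i)}=\Pi_{X_{i,\widehat{L_i}}}^{(\mathbb{L}_i)}$, the geometric fibre and hence the geometrically pro-$\mathbb{L}_i$ fundamental group being unaffected by $L_i\hookrightarrow\widehat{L_i}$. Applying \cite[Theorem~3.12]{Mochizuki-Tsujimura:RIMS1974} to the hyperbolic curves $X_{i,\widehat{L_i}}$ over the $p$-adic local fields $\widehat{L_i}$ produces a scheme isomorphism $X_{1,\widehat{L_1}}\xrightarrow{\sim}X_{2,\widehat{L_2}}$ lying over a field isomorphism $g\colon\widehat{L_1}\xrightarrow{\sim}\widehat{L_2}$. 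Since $g$ is $\mathbb{Q}$-linear it preserves being algebraic over $\mathbb{Q}$, and $L_i$ is exactly the subfield of $\widehat{L_i}$ of elements algebraic over $\mathbb{Q}$ (the henselization of a number field inside its completion), so $g$ restricts to an isomorphism $L_1\xrightarrow{\sim}L_2$. The scheme isomorphism then descends from $\widehat{L_2}$ to $L_2$: the scheme $\underline{\Isom}_{L_2}\bigl(X_{1,L_1}\times_{L_1,g}L_2,\;X_{2,L_2}\bigr)$ is finite over $L_2$ — proper and unramified with finite geometric fibres, exactly as for $\underline{\Aut}_{\mathcal{S}}$ in the proof of \cref{prop:Aut-restriction} — and a finite scheme over the henselian field $L_2$ has an $L_2$-point as soon as it has a point over the completion $\widehat{L_2}$. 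This gives the commutative square of \cref{thm:semi-abs-geometric3-2}; extending $L_1\xrightarrow{\sim}L_2$ to an isomorphism $\overline{K_1}\xrightarrow{\sim}\overline{K_2}$ of algebraic closures and base-changing along $L_i\hookrightarrow\overline{K_i}$ yields $X_{1,\overline{K_1}}\cong X_{2,\overline{K_2}}$ as schemes.

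Essentially all of the substance has been front-loaded into \cref{cor:recon_sev}, \cref{local_correspondence} and \cref{lem:deco_is_full}, so within this proof the point to handle with care is the bookkeeping around the decomposition subgroups: identifying $\Pi_i'$ with the fundamental group of the local curve $X_{i,L_i}$, and tracking which rational prime $p$ and which henselian local field is in play, so that the fullness input \cref{lem:deco_is_full} and \cite[Theorem~3.12]{Mochizuki-Tsujimura:RIMS1974} both genuinely apply; the henselization-versus-completion descent at the end is routine, but should be written out.
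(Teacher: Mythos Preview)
Your proposal is correct and follows essentially the same route as the paper's proof: reduce to hyperbolicity and $\mathbb{L}_1=\mathbb{L}_2$ via \cref{gallem}, invoke \cref{local_correspondence} for the local correspondence, use \cref{lem:deco_is_full} to identify the preimage of $D_{\overline{\mathfrak{p}_i}}$ with $\Pi_{X_{i,K_{i,\mathfrak{p}_i}}}^{(\mathbb{L})}$, apply \cite[Theorem~3.12]{Mochizuki-Tsujimura:RIMS1974} over the completions, and then descend to the henselizations via the finite \'etale Isom scheme. The only cosmetic differences are that you justify the identification $\Pi_i'\cong\Pi_{X_{i,L_i}}^{(\mathbb{L})}$ explicitly through \cref{lem:fiber_product} (the paper simply asserts it) and that your descent step is phrased as ``a finite scheme over a henselian field has a rational point once it has one over the completion'' whereas the paper quotes the equivalence $\text{F\'Et}(\Spec K_{1,\mathfrak{p}_1}^h)\simeq\text{F\'Et}(\Spec K_{1,\mathfrak{p}_1})$ from \cite[IV, 18.5.15]{MR238860}; these are the same argument, though you should say ``finite \'etale'' rather than merely ``finite'' there (which you have, since unramified over a characteristic-zero field is \'etale).
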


\begin{proof}
    \ref{thm:semi-abs-geometric3-1}
    The hypothesis \ref{thm:semi-abs-geometric3-a} and Lemma~\ref{gallem}\ref{gallem1} imply that both $\mathcal{X}_{1}$ and $\mathcal{X}_{2}$ are hyperbolic.
    Moreover, Lemma~\ref{gallem}\ref{gallem2} implies that $\mathbb{L}_{1}=\mathbb{L}_{2}$.
    Therefore, the assertion follows from the hypothesis \ref{thm:semi-abs-geometric3-b} and Proposition~\ref{local_correspondence}\ref{semi-abs-dec1}.

    \noindent
    \ref{thm:semi-abs-geometric3-2}
    For simplicity, we write $\mathbb{L}$ instead of $\mathbb{L}_{1}(=\mathbb{L}_{2})$.
    Let $p\coloneqq\overline{\mathfrak{p}}_{1}\mid_{\mathbb{Q}} \in\mathbb{L}$.
    By Proposition~\ref{local_correspondence}\ref{semi-abs-dec3}, we obtain that $\overline{\mathfrak{p}}_{2}$ is also above $p$.
    We fix an injection $\iota_{i}:\overline{K_{i}}\hookrightarrow \overline{K_{i,\mathfrak{p}_i}}$.
    Then $D_{\overline{\mathfrak{p}_i}} \subset G_{K_{i},S_i}$ is identified, via the chosen embedding $\iota_i$, with $\mathrm{Gal}(\overline{K_{i,\mathfrak{p}_{i}}}/K_{i,\mathfrak{p}_i})$ by Lemma~\ref{lem:deco_is_full}, and hence the inverse image of $D_{\overline{\mathfrak{p}_i}}$ via $\Pi_{\mathcal{X}_{i}}^{(\mathbb{L})} \twoheadrightarrow G_{K_{i},S_{i}}$ is identified with $\Pi_{X_{i,K_{i,\mathfrak{p}_i}}}^{(\mathbb{L})}$.
    Then $\Phi$ induces a pair $(\Phi_{\overline{\mathfrak{p}_{1}}}, \Phi_{G,\overline{\mathfrak{p}_{1}}})$ of isomorphisms that fits into the commutative diagram
    \begin{equation*}
        \vcenter{
        \xymatrix{
        \Pi_{X_{1,K_{1,\mathfrak{p}_1}}}^{(\mathbb{L})} \ar[d]\ar[r]^-{\Phi_{\overline{\mathfrak{p}_{1}}}}&\Pi_{X_{2,K_{2,\mathfrak{p}_2}}}^{(\mathbb{L})}\ar[d]\\
        G_{K_{1,\mathfrak{p}_1}} \ar[r]^-{\Phi_{G,\overline{\mathfrak{p}_{1}}}} &G_{K_{2,\mathfrak{p}_2}}.
        }
        }
    \end{equation*}
    Therefore, the hypothesis \ref{thm:semi-abs-geometric3-b} and the (semi-)absolute Grothendieck conjecture for hyperbolic curves over $p$-adic local fields (see~\cite[Theorem~3.12]{Mochizuki-Tsujimura:RIMS1974}) imply that there exists a unique pair $(\phi_{\overline{\mathfrak{p}_{1}}}, \phi_{G,\overline{\mathfrak{p}_{1}}})$ of isomorphisms that fits into the commutative diagram
    \begin{equation*}
        \vcenter{
        \xymatrix{
        X_{1,K_{1,\mathfrak{p}_1}} \ar[d]\ar[r]^-{\phi_{\overline{\mathfrak{p}_{1}}}}&X_{2,K_{2,\mathfrak{p}_2}}\ar[d]\\
        \Spec(K_{1,\mathfrak{p}_1})\ar[r]^-{\phi_{G,\overline{\mathfrak{p}_{1}}}} &\Spec(K_{2,\mathfrak{p}_2})
        }
        }
    \end{equation*}
    and whose induced pair of isomorphisms coincides with the pair $(\Phi_{\overline{\mathfrak{p}_{1}}}, \Phi_{G,\overline{\mathfrak{p}_{1}}})$ up to unique inner isomorphism of $\Delta_{X_{2,K_{2,\mathfrak{p}_{2}}}/K_{2,\mathfrak{p}_{2}}}^{\mathbb{L}}$.
    Let
    \begin{equation*}
        \underline{\Isom}_{K_{1,\mathfrak{p}_1}}
        \coloneqq \underline{\Isom}_{K_{1,\mathfrak{p}_1}}\bigl(X_{1,K_{1,\mathfrak{p}_1}},\,X_{2,K_{1,\mathfrak{p}_1}}\bigr)
    \end{equation*}
    be the $K_{1,\mathfrak{p}_1}$-scheme that represents the isomorphism functor of the hyperbolic curves.
    Here, note that the scheme $X_{2,K_{1,\mathfrak{p}_1}}$ is the base change of $\mathcal{X}_{2}$ via the morphism
    \begin{equation*}    \Spec(K_{1,\mathfrak{p}_1})\xrightarrow{\phi_{G,\overline{\mathfrak{p}_{1}}}}\Spec(K_{2,\mathfrak{p}_2})\to \Spec(K_{2})\to\Spec(\mathcal{O}_{K_{2},S_{2}}).
    \end{equation*}
    Then the isom-scheme is finite and unramified over $K_{1,\mathfrak{p}_1}$ by \cite[Theorem 1.11]{MR0262240}, and hence \'etale, since $K_{1,\mathfrak{p}_1}$ is a field.
    In particular, the existence of the pair $(\phi_{\overline{\mathfrak{p}_{1}}}, \phi_{G,\overline{\mathfrak{p}_{1}}})$ of isomorphisms implies that
    \begin{equation*}    \Hom_{K_{1,\mathfrak{p}_1}}\bigl(\Spec(K_{1,\mathfrak{p}_1}),\,\underline{\Isom}_{K_{1,\mathfrak{p}_1}}\bigr)
        \neq \emptyset.
    \end{equation*}
    Let $(\mathcal{O}_{K_{i}})_{\mathfrak{p}_i}^{h}$ be the henselization of the local ring $(\mathcal{O}_{K_{i}})_{\mathfrak{p}_i}$.
    The field isomorphism $\phi_{G,\overline{\mathfrak{p}_{1}}}:K_{2,\mathfrak{p}_2}\xrightarrow{\sim}K_{1,\mathfrak{p}_1}$ fixes $\mathbb{Q}$ and hence sends elements algebraic over $\mathbb{Q}$ to elements algebraic over $\mathbb{Q}$. Under the canonical embeddings of the henselizations into the completions, $K_{i,\mathfrak p_i}^{h}$ is the subfield of $K_{i,\mathfrak p_i}$ consisting of elements algebraic over $K_i$, equivalently algebraic over $\mathbb{Q}$. Therefore $\phi_{G,\overline{\mathfrak{p}_{1}}}$ restricts to an isomorphism $\Spec(K_{1,\mathfrak{p}_1}^{h})\xrightarrow{\sim}\Spec(K_{2,\mathfrak{p}_2}^{h})$.
    In the same way as above, we obtain that the $K_{1,\mathfrak{p}_1}^{h}$-scheme
    \begin{equation*}
        \underline{\Isom}_{K_{1,\mathfrak{p}_1}^{h}}
        \coloneqq   \underline{\Isom}_{K_{1,\mathfrak{p}_1}^{h}}\bigl(X_{1,K_{1,\mathfrak{p}_1}^{h}},X_{2,K_{1,\mathfrak{p}_1}^{h}}\bigr)
    \end{equation*}
    is also finite and \'etale.
    Since the natural morphism $\Spec(K_{1,\mathfrak{p}_1})\to \Spec(K_{1,\mathfrak{p}_1}^{h})$ induces the equivalence of categories
    \begin{equation*}
        \text{F\'Et}(\Spec(K_{1,\mathfrak{p}_1}^{h}))\xrightarrow{\sim}\text{F\'Et}(\Spec(K_{1,\mathfrak{p}_1}))
    \end{equation*}
    by \cite[Chapitre IV Proposition 18.5.15]{MR238860}, we have that
    \begin{equation*}         \Hom_{K_{1,\mathfrak{p}_1}^{h}}\bigl(\Spec(K_{1,\mathfrak{p}_1}^{h}),\,\underline{\Isom}_{K_{1,\mathfrak{p}_1}^{h}}\bigr)
        \xrightarrow{\sim}      \Hom_{K_{1,\mathfrak{p}_1}}\bigl(\Spec(K_{1,\mathfrak{p}_1}),\,\underline{\Isom}_{K_{1,\mathfrak{p}_1}}\bigr)
        \neq \emptyset.
    \end{equation*}
    Hence we obtain a pair of isomorphisms that fits into the commutative diagram
    \begin{equation*}
        \vcenter{
        \xymatrix{
        X_{1,K_{1,\mathfrak{p}_1}^{h}} \ar[d]\ar[r]^-{\sim}&X_{2,K_{2,\mathfrak{p}_2}^{h}}\ar[d]\\
        \Spec(K_{1,\mathfrak{p}_1}^{h})\ar[r]^-{\sim} &\Spec(K_{2,\mathfrak{p}_2}^{h}).
        }
        }
    \end{equation*}
    Since the extension $K_{i,\mathfrak{p}_i}^{h}/K_i$ is algebraic, the geometric generic fibers of $\mathcal{X}_1$ and $\mathcal{X}_2$ are isomorphic as schemes. This completes the proof.
\end{proof}

\section*{Acknowledgements}
The authors would like to express their deepest gratitude to Professor Akio~Tamagawa for suggesting the topic of the present paper and for offering valuable insights through discussions.
They would also like to thank Professor Yuichiro~Taguchi, Professor Yuichiro~Hoshi, and Shun~Ishii for their valuable suggestions and feedback.
Any remaining errors are the authors' own.
This work was supported by JSPS KAKENHI Grant Numbers 25KJ0125 and 23KJ0881.

\printbibliography

\end{document}